\author{Christian Bick${}^{1,2,3,4}$ and Tobias Böhle${}^{1,5}$ and Christian Kuehn${}^{5,6,7}$}
\title{Phase Oscillator Networks with Nonlocal Higher-Order Interactions: Twisted States, Stability and Bifurcations}
\date{{\small
${}^1$Department of Mathematics, Vrije Universiteit Amsterdam, De Boelelaan 1111, Amsterdam, the Netherlands.\\
${}^2$Institute for Advanced Study, Technical University of Munich, Lichtenbergstr. 2, 85748 Garching, Germany.\\
${}^3$Department of Mathematics, University of Exeter, Exeter EX4 4QF, United Kingdom.\\
${}^4$Mathematical Institute, University of Oxford, Oxford OX2 6GG, United Kingdom.\\
${}^5$Technical University of Munich, School of Computation Information and Technology, Department of Mathematics, Boltzmannstr. 3, 85748 Garching, Germany.\\
${}^6$Complexity Science Hub Vienna, Josefst\"adter Str.~39, 1080 Vienna, Austria.\\
${}^7$Munich Data Science Institute, Walther-von-Dyck-Str. 10, 85748 Garching, Germany.\\}
\medskip
\today}
\newcommand{\R}{\mathbb{R}} 
\newcommand{\Z}{\mathbb{Z}} 
\newcommand{\N}{\mathbb{N}}
\newcommand{\T}{\mathbb{T}}
\renewcommand{\S}{\mathbb{S}}
\newcommand{\norm}[1]{\left\lVert #1 \right\rVert}
\newcommand{\abs}[1]{\left\lvert #1 \right\rvert}
\newcommand{\Tobi}[1]{{\color{orange}#1}}
\newcommand{\CB}[1]{{\color{purple}CB: #1}}
\renewcommand{\d}{\mathrm d}
\theoremstyle{definition}
\newtheorem{definition}{Definition}[section]		
\theoremstyle{plain} 	
\newtheorem{lemma}[definition]{Lemma}				
\newtheorem{theorem}[definition]{Theorem}			
\newtheorem{proposition}[definition]{Proposition}	
\theoremstyle{remark}
\newtheorem{remark}[definition]{Remark}				
\theoremstyle{plain}
\numberwithin{equation}{section}
\begin{document}

\maketitle

\hrule
\paragraph{Abstract.}

The Kuramoto model provides a prototypical framework to synchronization phenomena in interacting particle systems. Apart from full phase synchrony where all oscillators behave identically, identical Kuramoto oscillators with ring-like nonlocal coupling can exhibit more elaborate patterns such as uniformly twisted states. It was discovered by Wiley, Strogatz and Girvan in 2006 that the stability of these twisted states depends on the coupling range of each oscillator. In this paper, we analyze twisted states and their bifurcations in the infinite particle limit of ring-like nonlocal coupling. We not only consider traditional pairwise interactions as in the Kuramoto model but also demonstrate the effects of higher-order nonpairwise interactions, which arise naturally in phase reductions. We elucidate how pairwise and nonpairwise interactions affect the stability of the twisted states, compute bifurcating branches, and show that higher-order interactions can stabilize twisted states that are unstable if the coupling is only pairwise.
\vspace{-3mm}
\paragraph{Keywords.} Bifurcation Analysis, Twisted States, Network Dynamics, Higher-Order Interactions\vspace{-3mm}
\paragraph{Mathematics Subject Classification.} 35R02, 37G40. \vspace{4mm}
\hrule

\section{Introduction}

Interacting particle systems are abundant in many real-world systems. For example fireflies flashing in unison~\cite{Buck1968}, collective behavior in the financial market~\cite{DalMasoPeron2011}, chirping snowy tree crickets~\cite{Walker1969}, or neurons in the brain synchronizing their bursts~\cite{Golomb2001} are all examples of dynamical systems that consist of many interacting particles. Even though each particle behaves according to its own microscopic rules, the system as a whole can show collective dynamics, for example synchrony.
A classical mathematical model to study synchrony in these interacting particle systems is the Kuramoto model, that was proposed by Yoshiki Kuramoto in 1984~\cite{Kuramoto1984}. It describes the evolution of~$M$ oscillators on the unit circle, each coupled to every other via a simple sinusoidal coupling. Moreover, each oscillator possesses an intrinsic frequency that is typically sampled from a real probability distribution with a unimodal symmetric density, which makes the oscillators heterogeneous.

While the classical Kuramoto model assumes all-to-all identical coupling, in many real-world systems the coupling is actually not all-to-all but interactions are captured by a (weighted) graph. 
As an example, each oscillator may have a spatial position and coupling between oscillators depends on their positions; such networks often arise in neural field modeling~\cite{Amari1977,Coombes2005,Borisyuk1998}, where coupling strength typically relates to the distance of nodes.
Such a coupling scheme can be realized by considering Kuramoto oscillators on a graph.
Consider~$M$ oscillators and let~$a_{ij}$ be the coupling from oscillator~$j$ to oscillator~$i$; the matrix~$(a_{ij})$ can be interpreted as the adjacency matrix of a weighted graph. The phase~$\phi_i\in\T:= [0,2\pi]/(0\sim2\pi)$ of oscillator~$i$ evolves according to
\begin{align*}
    \dot \phi_i = \omega_i + \frac 1M \sum_{j=1}^M a_{ij} \sin(\phi_j-\phi_i),
\end{align*}
for $i=1,\dots,M$. Graphs that do not describe all-to-all coupling allow for more interesting dynamics beyond full phase synchrony~\cite{Rodrigues2016}. 
For example, one can consider a $k$-nearest-neighbor networks of~$M$ nodes: 
Oscillators are arranged as a ring in ascending order around the unit circle and each oscillator is coupled to all of its~$k$ predecessors and all of its~$k$ successors (modulo~$M$). 
In other words, $a_{ij} = 1$ if $\min(\abs{i-j}, M-\abs{i-j}) \le k$ and~$a_{ij} = 0$ otherwise.
Changing~$k$ changes the coupling range: For $k=1$ we have a ring with local nearest-neighbor coupling, for $k=M/2$ the network is globally all-to-all coupled, and for intermediate~$k$ the coupling is often called nonlocal.
On this network, the Kuramoto model with~$\omega_i\equiv 0$ shows many interesting states. For example regular twisted states~\cite{Wiley2006} or irregular chimera states~\cite{OmelChenko2018, Xie2014} when we allow for phase lag parameters in the coupling function.

While the traditional Kuramoto model assumes interactions between pairs of oscillators, higher-order interactions can have a profound impact on the dynamics; cf.~\cite{Battiston2021, Bick2021b}. Such nonpairwise interactions arise naturally in phase oscillator networks that originate from (higher order) phase reductions and become important for the dynamics as the coupling strength is increased~\cite{Ashwin2016a,Leon2019a}.
Moreover, nonpairwise interactions also arise in rings of nonlocally coupled oscillators: In~\cite{Matheny2019}, the authors consider a network of eight nanoelectromechanical oscillators coupled via higher-order nearest-neighbor interactions. They found that this system exhibits complex and exotic states even though the coupling functions are fairly simple.

It turns out that instead of analyzing twisted states on large finite networks, it is easier to consider them in the continuum limit on the limiting object, also called graphon. Therefore, we consider the dynamics of the continuum limit of large $k$-nearest-neighbor networks, when $k$ grows with the system size~$M$. In particular, the limiting network can be obtained by fixing the coupling range~$r= k/M$ and sending $M\to\infty$. On this continuum limit we analyze the stability and the bifurcation around twisted states. While twisted states have originally been studied in~\cite{Wiley2006}, a lot of research has been done to generalize these results~\cite{Medvedev2017, Medvedev2015, Medvedev2014, Omelchenko2014, Chiba2018, Girnyk2012}.

In our paper, we propose an extension of the pairwise coupling in the continuum limit to higher-order interactions. We study the stability of twisted states and show how this property is influenced by higher-order interactions. To this end, we analyze the bifurcation point where a twisted state looses or gains stability. We investigate which nontrivial equilibria bifurcate from the twisted states and how they depend on the parameters of the system, which are the coupling range~$r$ and the strengths of the higher-order interactions. We apply this theory to regular models without higher-order interactions and thereby extend the analysis from many previous works by the bifurcation analysis. Moreover, we show how higher-order interactions can make $q$-twisted states stable or unstable and how they affect the type of the bifurcation.

The work is organized as follows: In Section~\ref{sec:setting} we introduce the system and clarify general notation. Then, in Section~\ref{sec:solution}, we first perform a Lyapunov--Schmidt reduction to convert the infinite-dimensional problem into a finite-dimensional problem. Then, we explain how to use the symmetry of the system to simplify the finite-dimensional problem into a two-dimensional one. Next, we tackle this two-dimensional problem by employing a Taylor expansion from which we can read off the type of the bifurcation. In the last part of this section, we derive equilibria approximations and analyze linear stability of bifurcating equilibria. Section~\ref{sec:applications} contains three interesting special cases, for which we conduct numerical simulations that illustrate and confirm the theory. In Section~\ref{sec:honns} we discuss a few more ways of generalizing pairwise $k$-nearest-neighbor coupling to higher-order interactions and explain why we focused particularly on one choice. Finally, Section~\ref{sec:conclusion} contains some concluding remarks.

\section{Nonlocally Coupled Phase Oscillators with Higher-Order Interactions}
\label{sec:setting}

\subsection{Nonlocally Coupled Phase Oscillators and $q$-Twisted States}\label{sec:continuumLimit}

Consider a nonlocally coupled network of~$M$ identical Kuramoto phase oscillators with $k$-nearest-neighbor network. More specifically, suppose that the phase $\phi_i$ of oscillator~$i$ evolves according to
\begin{align}\label{eq:classical_finite}
	\dot\phi_i = \frac 1M \sum_{j=1}^M a_{i-j} \sin(\phi_j-\phi_i),\quad \text{for }i=1,\dots,M,
\end{align}
where the the coefficients~$a_i$ are defined by $a_i = 1$ if $\min(\abs{i}, M-\abs{i}) \le k$ and $a_i=0$ otherwise. This ODE system is posed in the phase space~$\T^M$ and the underlying coupling structure is illustrated in Figure~\ref{fig:network_and_twisted}(a).
The system~\eqref{eq:classical_finite} has multiple symmetries. First, there is continuous symmetry of $\phi_i\mapsto\phi_i+\alpha$ for any $\alpha\in\T$. Moreover, since~$a_{i-j}$ only depends on the difference $i-j$ and the network is symmetric, the system also has a finite symmetry group $\mathbb D_M$, which is the dihedral group consisting of~$2M$ elements. For a full investigation of symmetry in this system see~\cite{Ashwin1992}. For $q\in \N$, a $q$-twisted state is a phase configuration that satisfies
\begin{align*}
    \phi^q_i = 2\pi q i/M+\alpha,\quad \text{for }i=1,\dots,M,
\end{align*}
where $\alpha\in \T$ is an arbitrary parameter. 
By exploiting these symmetries and the odd symmetry of the coupling function~$\sin$, one can show that these $q$-twisted states are equilibria of~\eqref{eq:classical_finite}. While showing the time invariance of a $q$-twisted state on~\eqref{eq:classical_finite} is relatively easy, investigating its stability turns out to be more complicated~\cite{Girnyk2012}. 

To understand the stability of $q$-twisted states one often considers the continuum limit of the network dynamical system with infinitely many oscillators.
Given a solution $\phi_i^M(t)$ of the system~\eqref{eq:classical_finite}, one can derive the continuum limit by first defining a function~$\Theta^M(t,x)$ as
\begin{align*}
    \Theta^M(t,x) = \phi_i(t)\qquad \text{if }x\in \left[\frac{i-1}M, \frac iM\right).
\end{align*}
Here, $x$~represents the position of an oscillator in the infinite network limit. In order to distinguish the phase space~$\T$ from the index set of the network nodes, we regard~$x$ as a variable on the unit circle $\S := [0,1]/(0\sim 1)$, that we parameterize from~$0$ to~$1$. 
Then, the function~$\Theta^M(t,x)$ satisfies
\begin{align}\label{eq:continuum_finite}
    \frac{\partial}{\partial t}\Theta^M(t,x) = \int_\S W_r^M(x-y) \sin(\Theta^M(t,y)-\Theta^M(t,x))\ \d y,
\end{align}
where~$W_r^M$ is defined as
\begin{align*}
    W_r^M(x) = a_i\qquad \text{if } i \in \left[\frac{i-1}M,\frac iM\right),
\end{align*}
for $x\in \S$. Keeping the coupling range $r:=k/M \in (0,\frac 12]$ fixed and letting $M\to \infty$, we formally obtain the limit $W_r^M\to W_r \in L^2(\S)$, where
\begin{align}\label{eq:W_def}
	W_r(x) := \begin{cases} 1 & \quad \text{if } \min(x, 1-x)\le r\\ 0 &\quad \text{else}\end{cases}.
\end{align}
Now, suppose that $\lim_{M\to \infty} \Theta^M(t,x) = \Theta(t,x)$ for a function $\Theta$. Then, formally taking the limit of~\eqref{eq:continuum_finite} as $M\to \infty$, we obtain the continuum limit
\begin{align}\label{eq:continuum_graph}
    \frac{\partial}{\partial t}\Theta(t,x) = \int_\S W_r(x-y) \sin(\Theta(t,y)-\Theta(t,x))\ \d y.
\end{align}
In this continuum limit, a $q$-twisted state, see Figure~\ref{fig:network_and_twisted}(c), is given by
\begin{align*}
    \Theta^q(x) = 2\pi q x + \alpha.    
\end{align*}
Even though this derivation was formal, it can be rigorously shown that the continuum limit~\eqref{eq:continuum_graph} approximates the dynamics of the finite system~\eqref{eq:classical_finite} for large~$M$~\cite{Medvedev2014a,Medvedev4,KuehnThrom2,GkogkasKuehnXu}.

System~\eqref{eq:continuum_graph} has a continuous $\T\times\S$ symmetry. A symmetry element $\beta\in\T$ acts 
by a phase shift
\begin{align}\label{eq:symmetry_phase_differences}
    \beta:\Theta(t,x) \mapsto \Theta(t,x)+\beta
\end{align}
to all oscillators and an element~$\phi\in\S$ acts by rotating the ring-like network, that is,
\begin{align}\label{eq:symmetry_rotation}
    \phi:\Theta(t,x)\mapsto \Theta(t,x+\phi).
\end{align}
The latter symmetry action can be seen as part of the limit of the~$\mathbb{D}_M$ symmetry of the finite-dimensional system as $M\to\infty$; cf.~\cite{Bick2021}.

\begin{figure}[h]
\centering
\begin{overpic}[grid=off, width = \textwidth]{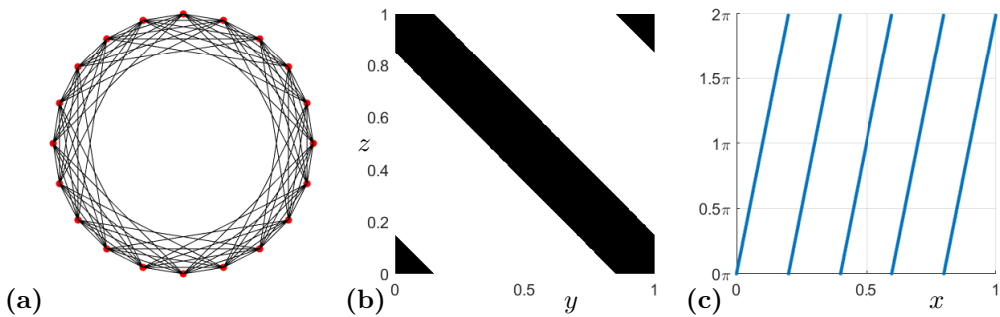}
\put(9,0){\textbf{(a)}}
\put(37,0){\textbf{(b)}}
\put(65,0){\textbf{(c)}}
\put(55,0){$y$}
\put(38,13){$z$}
\put(85,0){$x$}
\end{overpic}
\caption{\textbf{(a)} shows a $5$-nearest-neighbor graph on $20$ nodes. \textbf{(b)} depicts the topology of a higher-order network. All pairs $(y,z)$ for which a triangle between $(x,y,z)$ for $x=0$ exists, i.e., that satisfy $W_r(z+y-2\cdot 0)=1$ are depicted by the black area. Here, $r = 0.15$. \textbf{(c)} shows a $5$-twisted state in the continuum limit.}
\label{fig:network_and_twisted}
\end{figure}

\subsection{Nonlocal Coupling and Higher-Order Interactions}

Higher-order interactions for phase oscillators are nonlinear interactions between three or more oscillator phases. An example for triplet interactions between three oscillators at $x,y,z\in\S$ with phases $\Theta(t,x),\Theta(t,y),\Theta(t,z)$ at time~$t$ is given by $\sin(\Theta(t,z)+\Theta(t,y)-2\Theta(t,x))$.
To incorporate nonpairwise higher-order interactions, we extend the continuum limit~\eqref{eq:continuum_finite} in the natural way: For a network with pure triplet interactions the phases evolve according to
\begin{align*}
    \frac{\partial}{\partial t}\Theta(t,x) = \int_\S \int_\S W(z,y,x) \sin(\Theta(t,z)+\Theta(t,y)-2\Theta(t,x))\ \d y \d z,
\end{align*}
where $W\colon \S^3\to \R$ is a general $3$-tensor that describes in which triplets there is an interaction (and generalizes the weighted adjacency tensor for pairwise interactions).

As we are interested in $q$-twisted states, we consider a specific class of nonlocal higher-order interaction structure $W(z,y,x) = W_r(z+y-2x)$ for the coupling function $\sin(\Theta(t,z)+\Theta(t,y)-2\Theta(t,x))$. This choice of network topology naturally extends nonlocal pairwise interactions and in the resulting network dynamical system $q$-twisted states are still relative equilibria---the resulting network topology can be seen in Figure~\ref{fig:network_and_twisted}(b). While we focus on this particular generalization, there are other network nonlocal higher-order network topologies that preserve $q$-twisted states as discussed in Section~\ref{sec:honns} for a discussion.
The higher-order network topology allows for long range connections: An oscillator~$x$ is not only influenced by triangles spanned by nodes neighboring~$x$. Instead, if say $x=0$, a triangle $(x,y,z)$ exists when $\abs{z+y}\le r$ in $\S$, which is the case when for example $z = 1/4$ and $y = 3/4$. In particular, in this case $z+y-2x=0$ and thus this triangle exists for all coupling ranges $r>0$. In that sense, it is distinct from the ``nearest-neighbor'' higher-order networks considered in~\cite{Matheny2019}.

Our nonlocal higher-order coupling also naturally extends beyond triplet coupling: In the following we will consider a combination of pairwise and higher-order (triplet and quadruplet) interactions. 
Specifically, we consider the continuum limit
\begin{align}\label{eq:problem}
\begin{split}
	\frac{\partial}{\partial t}\Theta(t,x) &= \int_\S W_r(x-y) \sin(\Theta(t,y)-\Theta(t,x))\ \d y\\
	&\quad + \lambda \int_\S\int_\S W_r(z+y-2x) \sin(\Theta(t,z)+\Theta(t,y) - 2\Theta(t,x))\ \d y\d z\\
	&\quad + \mu\int_\S\int_\S\int_\S W_r(z-y+w-x) \sin(\Theta(t,z)-\Theta(t,y)+\Theta(t,w)-\Theta(t,x))\ \d w \d y \d z,
\end{split}
\end{align}
where $r \in (0,\frac 12]$ describes the coupling range and $\lambda,\mu \in \R$ are the strengths of the higher-order interactions. These three parameters can be summarized into one parameter $p = (r,\lambda,\mu) \in \mathcal P$, where $\mathcal P = (0,\frac12]\times\R\times \R$ denotes the parameter space. Here, the first line of~\eqref{eq:problem} describes the continuum limit of a Kuramoto model with nonlocal coupling and the second and third lines are triplet and quadruplet higher-order interactions.
The system~\eqref{eq:problem} has the same symmetries as the system~\eqref{eq:continuum_graph}.

\subsection{Linearization}

We want to analyze the stability and bifurcations of $q$-twisted states for~\eqref{eq:problem} and determine the existence and stability of possible bifurcating branches that occur as system parameters are varied. To answer these kind of questions, eigenvalues of the linearization of the right-hand side of~\eqref{eq:problem} are of importance. 
The continuous phase shift symmetry~\eqref{eq:symmetry_phase_differences} of system~\eqref{eq:problem} implies that if~$\Theta(t,x)$ is a solution to the PDE~\eqref{eq:problem}, then so is $\Theta(t,x) + \beta$ with a constant $\beta\in \T$, see~\cite{Golubitsky2002}.
Therefore, the system has a neutrally stable direction, which yields a zero eigenvalue in the linearization of the right-hand side of~\eqref{eq:problem}.
We can avoid the zero eigenvalue by considering the evolution of phase differences $\Psi(t,x) := \Theta(t,x) - \Theta(t,0)$ which reduces the continuous phase shift symmetry~\eqref{eq:symmetry_phase_differences}. 
The function~$\Psi(t,x)$ satisfies
\begin{align}\label{eq:pd_problem}
\begin{split}
\frac{\partial}{\partial t}\Psi(t,x) &= \int_\S W_r(x-y)\sin(\Psi(t,y)-\Psi(t,x))\ \d y - \int_\S W_r(y)\sin(\Psi(y))\ \d y\\
&\quad + \lambda\left[\int_\S\int_\S W_r(z+y-2x)\sin(\Psi(t,z)+\Psi(t,y)-2\Psi(t,x)) \ \d y \d z\right.\\
&\qquad  \left.- \int_\S\int_\S W_r(z+y)\sin(\Psi(t,z)+\Psi(t,y))\ \d y \d z\right]\\
&\quad + \mu \left[ \int_\S\int_\S\int_\S W_r(z-y+w-x) \sin(\Psi(t,z)-\Psi(t,y)+\Psi(t,w)-\Psi(t,x)) \ \d w \d y \d z\right.\\
&\qquad \left. - \int_\S\int_\S\int_\S W_r(z-y+w) \sin(\Psi(t,z)-\Psi(t,y)+\Psi(t,w)) \ \d w \d y \d z \right]
\end{split}
\end{align}
and $\Psi(t,0) = 0$. We denote the right-hand side of this system by~$F(\Psi,p)$. In this system, a $q$-twisted state is given by $\Psi^q(x) = 2\pi q x$ and it cannot be perturbed along a constant function anymore, since the perturbed function would then violate $\Psi(t,0) = 0$.
Moreover, since we are particularly interested in the behavior of~$F$ in a neighborhood of a $q$-twisted state, we define
\begin{align*}
    F^q(v,p) := F(\Psi^q + v, p),\qquad F^q\colon X\times\mathcal P\to X.
\end{align*}
Here,~$v$ can be seen as the perturbation of a twisted state which we consider in the space~$X := H_0^1:=H_0^1(\S,\R)$ which is the function space whose functions and their weak derivatives are in~$L^2(\S,\R)$ and which satisfy the boundary condition $f(0) = 0$. Since $C(\S)\subset H^1(\S,\R)$, these boundary conditions can be imposed in the classical sense. That~$F^q$ indeed maps into~$X$ is shown as a special case in Appendix~\ref{sec:derivative}. Together with the scalar product
\begin{align*}
f\cdot g := \int_\S f(x)g(x)\ \d x + \int_\S Df(x) Dg(x)\ \d x
\end{align*}
the space~$X$ forms a Hilbert space. Moreover, the induced norm is given by
\begin{align*}
\norm{f}_{H^1_0} = \sqrt{f\cdot f} = \sqrt{\norm{f}_{L^2}^2 + \norm{Df}_{L^2}^2}.
\end{align*}
Since every function $\eta\in H_0^1$ can be written as
\begin{align*}
	\eta(x) = \sum_{k=1}^\infty a_k \sin(2\pi k x) + b_k (1-\cos(2\pi k x)),
\end{align*}
the functions $u_k(x) = \sin(2\pi k x)$ and $w_k(x) = 1-\cos(2\pi k x)$ for $k\ge 1$ form a Schauder basis of~$H_0^1$. Furthermore, our choice of~$W_r$ in~\eqref{eq:W_def} yields a Fourier decomposition
\begin{align*}
	W_r(x) = \frac 12 \hat W_r(0) + \sum_{k=1}^\infty \hat W_r(k) \cos(2\pi k x),
\end{align*}
with
\begin{align}\label{eq:W_hat_coefficients}
	\hat W_r(k) = \begin{cases} \frac{2\sin(2\pi k r)}{\pi k} & \quad \text{if }k\neq 0\\ 4r &\quad\text{if }k=0 \end{cases}.
\end{align}
It can be shown, see Appendix~\ref{sec:derivative}, that the Fr\'echet-Derivative of~$F^q(v,p)$ with respect to~$v$ around~$0$ is given by a bounded linear operator $F^q_v(0,p)\colon X\to X$. An evaluation of~$F^q_v(0,p)$ on these basis elements yields
\begin{align}\label{eq:F_ev}
\begin{split}
	F^q_v(0,p)[u_k] &= c_1(q,k,p) u_k,\\
	F^q_v(0,p)[w_k] &= c_1(q,k,p) w_k,
\end{split}
\end{align}
for $k\in \N$ and
\begin{align*}
	c_1(q,k,p) := \frac{1}{4}\Big(\hat W_r(q-k)+ \hat W_r(q+k) - 2\hat W_r(q) - (4\lambda+2\mu) \hat W_r(q)\Big),
\end{align*}
where we use the convention $\hat W_r(-k) := \hat W_r(k)$. The eigenvalues are then given by $\xi_k = c_1(q,k,p)$, each with multiplicity~$2$. Since $F^q_v(0,p)$ is a multiplication operator on this basis, the spectrum is the closure of the set of eigenvalues, i.e.,
\begin{align*}
	\sigma(F^q_v(0,p)) = \operatorname{cl}(\{ \xi_k, k\in\N\}).
\end{align*}

When one of these spectral values passes through $0$, we may expect a change of stability of the $q$-twisted state. This is what we investigate in the next section.

\section{Bifurcation Theory}\label{sec:solution}

We now consider varying parameters along a general curve in parameter space; for phase oscillator networks that arise as phase reductions from a physical system~\cite{Ashwin2016a}, we expect that variation of a physical system parameter gives rise to such a curve. Specifically, we assume from now on that
\begin{enumerate}
	\item there is a smooth curve through the interior of the parameter space $p\colon (-\delta,\delta)\to \operatorname{int}(\mathcal P)$, $p(s) = (r(s), \lambda(s), \mu(s))$ with $p(0) = p_0 = (r_0, \lambda_0,\mu_0)$,
	\item at $s=0$ we have $c_1(q,\ell,p(s)) = 0$ for one $\ell\in \N$ and for all $s\in(-\delta,\delta)$ we have $c_1(q,k,p(s))\notin (-\epsilon,\epsilon)$ for some $\epsilon >0$ and all $k\neq \ell$,
	\item $c_1(q,\ell,p(s))$ is an isolated eigenvalue, i.e., for all $s\in(-\delta,\delta)$, the sequence $(c_1(q,k,p(s)))_{k\in \N}$ does not have an accumulation point at $\xi_\ell$,
	\item the zero eigenvalue passes through $0$ with non-vanishing speed as $s$ passes through $0$, i.e., $\frac{\d }{\d s} c_1(q,\ell,p(s)) \neq 0 $ for $s=0$.
\end{enumerate}

\begin{remark}
	Since $\lim_{k\to \infty} c_1(q,k,p) = \frac 14 \hat W_r(q)(-2 - (4\lambda+2\mu))$ exists, it is the only possible spectral value that is not an eigenvalue. Moreover, since it depends continuously on all parameters, Assumption~$3$ only has to be checked at $s=0$. Furthermore, since $c_1(q,k,p)$ is uniformly (w.r.t.~$k$) Lipschitz continuous in~$p$, Assumption~$2$ only has to be checked at $s=0$.
\end{remark}

From now on we use the notation~$V$ for an open neighborhood of~$p_0$ in~$\mathcal P$. By a slight abuse of notation, this $V$ might have to be shrunk from one statement to the other, but always represents a small enough open neighborhood of~$p_0$. Similarly, $(-\delta,\delta)$, which represents an open neighborhood of $0$ in $\R$, might have to be shrunk from statement to statement.

\subsection{Lyapunov--Schmidt Reduction}\label{sec:LyapunovSchmidt}

At the bifurcation point $s=0$, the nullspace of the linearization is given by
\begin{align*}
	N := \mathcal N(F^q_v(0,p_0)) = \operatorname{span}\{u_\ell, w_\ell\}.
\end{align*}
Further we denote the range of the linearization by $R=\operatorname{span}\{u_k,w_k:~k\neq \ell\}$. Following the notation from~\cite{Kielhofer2012}, we consider $F^q_v(v,p)$ as a map from $X\times\mathcal P$ to~$Z$, where $X=Z=H_0^1$. Even though~$X=Z$, we use different notation for the domain and target set to emphasize the distinction between them. These spaces can be decomposed into
\begin{align*}
	X = N\oplus X_0\qquad\text{and}\qquad Z = R \oplus Z_0,
\end{align*}
where $X_0$ is a complement of~$N$ in~$X$ and~$Z_0$ is a complement of~$R$ in~$Z$. We choose $Z_0=N$ and $X_0=R$. Moreover the projection onto~$Z_0$ is defined by
\begin{align*}
	&Q\colon Z\to Z_0\quad\text{along }R.
\end{align*}
To determine equilibria of~\eqref{eq:pd_problem}, we need to find solutions to $F^q(v,p)=0$. By performing a Lyapunov-Schmidt reduction we can reduce this infinite-dimensional problem to a finite-dimensional problem, as the next theorem shows.
\begin{theorem}[{\cite[Chapter I.2]{Kielhofer2012}}]\label{thm:LyapunovSchmidt}
    There is a neighborhood $U_1\times V_1\subset X\times\mathcal P$ of~$(0,p_0)$ such that the full infinite-dimensional problem of finding equilibria of~\eqref{eq:pd_problem}, i.e., solving 
\begin{align*}
	F^q(v,p) = 0
\end{align*}
in $U_1\times V_1$ is equivalent to solving
\begin{align*}
	\Phi(v,p) = 0,
\end{align*}
where $\Phi\colon U_2\times V_2\to Z_0$ for $(v,p)\in U_2\times V_2\subset N\times \mathcal P$. Here, $\Phi(0,p_0) = 0$ and $\Phi$ is defined by
\begin{align}\label{eq:Phi_def}
	\Phi(v,p) = QF^q(v + \psi(v,p),p),
\end{align}
where $\psi\colon N\times V_2\to X_0$ is a function satisfying $\psi(0,p_0) = 0$. It is implicitly defined to be the unique solution to the equation
\begin{align}\label{eq:psi_def}
	(I-Q)F^q(v + \psi(v,p),p) = 0
\end{align}
in a neighborhood of $(0,p_0)$.
\end{theorem}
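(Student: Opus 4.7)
The plan is to apply the abstract Lyapunov--Schmidt theorem \cite[Chapter I.2]{Kielhofer2012} essentially verbatim; the work is to verify its hypotheses for the specific operator $F^q$ at $(0,p_0)$. Three ingredients are required: (i) sufficient smoothness of $F^q$ in a neighborhood of $(0,p_0)$; (ii) that $L:=F^q_v(0,p_0)$ is a bounded Fredholm operator of index zero with kernel $N=\operatorname{span}\{u_\ell,w_\ell\}$ and closed range $R=\overline{\operatorname{span}}\{u_k,w_k:k\ne\ell\}$; and (iii) that the chosen splittings $X=N\oplus X_0$ and $Z=R\oplus Z_0$ are topological direct sums, so that $Q\colon Z\to Z_0$ along $R$ is a bounded projection.

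Ingredients (i) and (iii) are largely bookkeeping. Appendix~\ref{sec:derivative} provides Fr\'echet differentiability of $F^q$; because the nonlinearity is the analytic function $\sin$ composed with bounded linear functionals of $v$ against the bounded kernels $W_r$, the same argument applied to higher derivatives yields $F^q\in C^\infty(X\times\mathcal P,Z)$. For (iii), the family $\{u_k,w_k\}_{k\ge 1}$ is a complete orthogonal system in the $H^1_0$-inner product, and $L$ acts diagonally on it by~\eqref{eq:F_ev}. Hence $N$ is a finite-dimensional subspace with a bounded (in fact, finite-rank) spectral projector, and $X_0=R$ is a topological complement; the same decomposition works for $Z=R\oplus Z_0$ with $Z_0=N$.

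The essential step is (ii). By~\eqref{eq:F_ev} the eigenvalues are $\xi_k=c_1(q,k,p_0)$, each of multiplicity two. Assumption~2 yields $\xi_\ell=0$ and $|\xi_k|\ge\epsilon$ for all $k\ne\ell$, and Assumption~3 (together with the limit $\lim_{k\to\infty}c_1(q,k,p_0)$ from the remark) ensures that $0$ is an isolated spectral value. The diagonal operator $L|_{X_0}\colon X_0\to R$ therefore has a bounded inverse of operator norm at most $1/\epsilon$, so $L$ has closed range $R$, kernel $N$, and is Fredholm of index zero.

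Given (i)--(iii), the reduction is standard. Write $v=v_1+v_2$ with $v_1\in N$ and $v_2\in X_0$, apply $Q$ and $I-Q$, and split the equation $F^q(v_1+v_2,p)=0$ into
\begin{align*}
(I-Q)F^q(v_1+v_2,p)&=0,\\
QF^q(v_1+v_2,p)&=0.
\end{align*}
The $v_2$-derivative of the first (range) equation at $(0,0,p_0)$ is $(I-Q)L|_{X_0}=L|_{X_0}\colon X_0\to R$, which by~(ii) is a Banach-space isomorphism. The implicit function theorem then yields a unique smooth $\psi\colon U_2\times V_2\to X_0$ with $\psi(0,p_0)=0$ solving this range equation, and substituting into the second equation produces the reduced bifurcation equation $\Phi(v_1,p):=QF^q(v_1+\psi(v_1,p),p)=0$ on the two-dimensional space $N$, equivalent to $F^q(v,p)=0$ on a (possibly smaller) neighborhood $U_1\times V_1$. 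The main obstacle I anticipate is (ii), i.e.\ upgrading the pointwise conditions $\xi_\ell=0$ and $\xi_k\ne 0$ to a quantitative spectral gap that persists on a neighborhood of $p_0$; this is exactly what Assumptions~2 and~3 together with the limiting formula recorded in the remark are designed to provide.
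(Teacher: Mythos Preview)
The paper does not give its own proof of this theorem: immediately after the statement it writes ``The proof of this theorem relies on the implicit function theorem. For details see~\cite[Chapter I.2]{Kielhofer2012}.'' Your proposal---verifying the hypotheses of the abstract Lyapunov--Schmidt theorem and then quoting the standard implicit-function argument---is exactly the right way to fill this in, and the overall structure is correct.

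There is, however, a genuine slip in ingredient (iii) that also feeds into (ii). The family $\{u_k,w_k\}_{k\ge1}$ is \emph{not} orthogonal in the $H^1_0$-inner product: for $k\ne j$ one computes $w_k\cdot w_j=\int_\S(1-\cos 2\pi kx)(1-\cos 2\pi jx)\,\d x+\int_\S Dw_k\,Dw_j\,\d x=1+0=1$. The paper only claims a Schauder basis, not an orthogonal one. Consequently your statement that the diagonal operator $L|_{X_0}$ has bounded inverse of norm at most $1/\epsilon$ does not follow directly; for a non-orthogonal Schauder basis, a uniform lower bound on the diagonal entries does not by itself give a bounded inverse. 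Two clean repairs are available. First, one can pass to the equivalent norm $\|v\|^2_{\sim}:=\sum_k(1+(2\pi k)^2)(a_k^2+b_k^2)$ on $H^1_0$ (equivalence follows since $(\sum_k b_k)^2\le C\sum_k k^2b_k^2$ by Cauchy--Schwarz), under which the basis \emph{is} orthogonal and the $1/\epsilon$ bound is immediate. Second, one can note that $\xi_k\to c_\infty:=-\tfrac14(2+4\lambda_0+2\mu_0)\hat W_{r_0}(q)\ne0$, so $L-c_\infty I$ is diagonal with eigenvalues tending to zero and is compact (in the equivalent norm above, or by a direct finite-rank approximation), whence $L$ is Fredholm of index zero. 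Either fix restores your argument; the boundedness of $Q$ itself is fine, since the explicit functionals $z_1^*,z_2^*$ in the paper are bounded and annihilate $R$.
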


The proof of this theorem relies on the implicit function theorem. For details see~\cite[Chapter I.2]{Kielhofer2012} and for an introduction see~\cite{KuehnBook1}.

We introduce a coordinate representation of the function~$\Phi$ by considering the basis of the dual space~$Z_0'$ of~$Z_0$, which is given by the two functionals~$z_1^*, z_2^*$ with
\begin{align*}
	\langle z_1^*, v \rangle &= 2 \int_\S \sin(2\pi \ell x) v(x) \ \d x,\\
	\langle z_2^*, v \rangle &= -2 \int_\S \cos(2\pi \ell x) v(x) \ \d x,
\end{align*}
where $\langle\cdot,\cdot\rangle$ denotes the dual pairing. Then, we define a function $\hat \Phi\colon U_3\times V_3\to \R^2$, where $U_3\subset \R^2, V_3\subset \R$ are sufficiently small neighborhoods around the origin, as
\begin{align}\label{eq:hatPhi_def}
	\hat\Phi\left(\begin{pmatrix}
	a\\b\end{pmatrix},s\right) := \begin{pmatrix}
	\langle z_1^*, \Phi(a u_\ell + b w_\ell, p(s))\rangle\\
	\langle z_2^*, \Phi(a u_\ell + b w_\ell, p(s))\rangle
	\end{pmatrix}.
\end{align}
Given $(a_0,b_0)^\top$ in a neighborhood of $(0,0)^\top$ and $s\in (-\delta,\delta)$ such that $\hat \Phi((a_0,b_0)^\top,s)=(0,0)^\top$ we then know that $F(\Psi^q + a_0u_\ell + b_0w_\ell + \psi(a_0u_\ell+b_0w_\ell,p(s)),p(s))=0$. Therefore, this first argument of~$F$ represents an equilibrium. Conversely, every equilibrium in a neighborhood of the bifurcation point can be found in that way.

\subsection{Problem Reduction Using Symmetry}\label{sec:symmetry}

While reducing the system to phase differences~\eqref{eq:pd_problem} has reduced the phase-shift symmetry, the symmetry still has the residual rotational symmetry~\eqref{eq:symmetry_rotation}. One can expect that this symmetry is reflected in the bifurcation behavior. Indeed, the Lyapunov--Schmidt reduction can be carried out such that it preserves symmetries; cf.~\cite{Golubitsky2002}. Here, we show explicitly that the reduced equation~\eqref{eq:hatPhi_def} retains the rotational symmetry. This simplifies the system to a one-dimensional problem by eliminating the symmetry.

Specifically, in phase differences, the rotational symmetry~\eqref{eq:symmetry_rotation} acts as an operator $B_\phi\colon X\to X$ for $\phi\in \S$ given by
\begin{align*}
	(B_\phi f)(x) = f(x+\phi) - f(\phi).
\end{align*}
and the right-hand side~$F^q$ is equivariant with respect to this operation. The nullspace~$N$ is spanned by~$u_\ell$ and~$w_\ell$, which can be obtained from each other by shifting one function around the circle and adding a constant such that it satisfies the boundary conditions, i.e., by applying the operator~$B_\phi$. For the reduced equation determined by~$\hat \Phi$, this corresponds to a rotation. Specifically, with
\begin{align*}
	A_\phi = \begin{pmatrix}
	\cos(2\pi \ell \phi)& \sin(2\pi \ell \phi)\\
	-\sin(2\pi \ell\phi)& \cos(2\pi \ell\phi)
	\end{pmatrix}
\end{align*}
for a two-dimensional rotation matrix, we now show that~$\hat \Phi$ is $\S$-equivariant with respect to the action given by~$A_\phi$.

\begin{proposition}\label{prop:hatPhi_symm}
In a neighborhood of the origin, $\hat \Phi$ satisfies
\begin{align}\label{eq:hatPhi_symmetry}
	\hat \Phi \left(A_\phi \begin{pmatrix}	a\\ b	\end{pmatrix}, s\right) = A_\phi \hat \Phi\left(\begin{pmatrix} a\\b \end{pmatrix}, s\right),
\end{align}
for all~$\phi$.
\end{proposition}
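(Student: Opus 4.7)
The approach is standard equivariant Lyapunov--Schmidt: inherit the $\S$-symmetry from the PDE through each stage of the reduction until it lands on $\hat\Phi$. First I would check that $F^q(\cdot, p)$ is equivariant, $F^q(B_\phi v, p) = B_\phi F^q(v, p)$. For $F$ from~\eqref{eq:pd_problem}, the change of variables $y \mapsto y + \phi$ (and analogously $z, w \mapsto z+\phi, w+\phi$) in each integral, combined with the fact that $W_r$ depends only on the difference expressions $x-y$, $z+y-2x$, $z-y+w-x$ that are invariant under simultaneous shifts of all arguments, produces $F(B_\phi \Psi, p)(x) = F(\Psi,p)(x+\phi) - F(\Psi,p)(\phi)$; the subtracted boundary terms in~\eqref{eq:pd_problem} (which are the corresponding bulk integrands evaluated at $x=0$) transform correctly to give precisely this structure. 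Since $\Psi^q(x) = 2\pi q x$ is fixed by $B_\phi$ (as $2\pi q(x+\phi) - 2\pi q\phi = 2\pi q x$), the equivariance transfers from $F$ to $F^q$.

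Second I would analyze how $B_\phi$ acts on the decomposition $X = N \oplus R$. A direct sine/cosine addition argument gives, for each $k \ge 1$,
\begin{equation*}
B_\phi u_k = \cos(2\pi k\phi)\,u_k - \sin(2\pi k\phi)\,w_k, \qquad B_\phi w_k = \sin(2\pi k\phi)\,u_k + \cos(2\pi k\phi)\,w_k,
\end{equation*}
so each two-dimensional block $\operatorname{span}\{u_k, w_k\}$ is $B_\phi$-invariant. In particular both $N$ and $R$ are invariant, which gives $Q B_\phi = B_\phi Q$, and the matrix of $B_\phi|_N$ in the basis $(u_\ell, w_\ell)$ is exactly $A_\phi$. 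Substituting $B_\phi v$ into~\eqref{eq:psi_def} and applying $B_{-\phi}$ then shows that $v \mapsto B_{-\phi}\psi(B_\phi v, p)$ also solves~\eqref{eq:psi_def}, and the uniqueness clause of the implicit function theorem underlying Theorem~\ref{thm:LyapunovSchmidt} forces $\psi(B_\phi v, p) = B_\phi \psi(v, p)$. Combining with the equivariance of $F^q$ and $Q$ yields
\begin{equation*}
\Phi(B_\phi v, p) = Q F^q\bigl(B_\phi(v + \psi(v,p)), p\bigr) = B_\phi\, Q F^q(v + \psi(v,p), p) = B_\phi \Phi(v, p).
\end{equation*}

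Finally I would translate this to coordinates. A direct integral check gives $\langle z_1^*, u_\ell\rangle = \langle z_2^*, w_\ell\rangle = 1$ and $\langle z_1^*, w_\ell\rangle = \langle z_2^*, u_\ell\rangle = 0$, so the pair of dual pairings identifies $N$ with $\R^2$ via the basis $(u_\ell, w_\ell)$, and under this identification $B_\phi$ becomes $A_\phi$ on both source and target; the abstract equivariance of $\Phi$ becomes exactly~\eqref{eq:hatPhi_symmetry}. The main obstacle is making the uniqueness argument rigorous: since $B_\phi$ is not an isometry on $H_0^1$, one must shrink the neighborhood $U_2$ from Theorem~\ref{thm:LyapunovSchmidt} so that it is stable under all $B_\phi$, $\phi \in \S$. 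This is possible because $\phi \mapsto \|B_\phi\|_{X \to X}$ is uniformly bounded on the compact circle $\S$ (using the Sobolev embedding $H_0^1 \hookrightarrow C^0$ to control the pointwise value $f(\phi)$ that appears in the definition of $B_\phi$).
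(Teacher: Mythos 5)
Your proposal is correct and follows essentially the same route as the paper's proof: establish equivariance of $F^q$ under $B_\phi$, use that $B_\phi$ preserves $N$ and commutes with $Q$ together with the uniqueness of $\psi$ to get $\psi(B_\phi v,p)=B_\phi\psi(v,p)$ and hence $\Phi(B_\phi v,p)=B_\phi\Phi(v,p)$, and then pass to coordinates where $B_\phi|_N$ acts as $A_\phi$. The only additions are welcome elaborations (the explicit action of $B_\phi$ on $u_k,w_k$, the duality relations for $z_1^*,z_2^*$, and the uniform boundedness of $B_\phi$ needed to keep the neighborhood invariant), which the paper leaves implicit.
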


\begin{proof}
A straight-forward calculation confirms that $F^q$ satisfies
\begin{align}\label{eq:F_symmetry}
	F^q(B_\phi\eta,p) = B_\phi F^q(\eta,p)
\end{align}
for all~$\eta\in X$ and all~$p\in \mathcal P$. Now, let us see how this property propagates to the function $\psi$. By definition, $\psi$ solves
\begin{align*}
	(I-Q)F^q(v + \psi(v,p),p) = 0
\end{align*}
for all $v\in N, p\in \mathcal P$. Now, choose $v = B_\phi u \in N$ for some $u\in N$ and note that $B_\phi$ leaves $N$ invariant and further commutes with~$Q$. Then, on the one hand
\begin{align}\label{eq:psi_symm_cond1}
	(I-Q) F^q(B_\phi u + \psi(B_\phi u ,p),p)=0.
\end{align}
On the other hand
\begin{align}
\nonumber
0&= B_\phi 0\\
\nonumber
&= B_\phi (I-Q) F^q(u + \psi(u,p),p)\\
\nonumber
&= (I-Q) B_\phi F^q(u + \psi(u,p),p)\\
\nonumber
&= (I-Q) F^q(B_\phi[u+\psi(u,p)], p)\\
\label{eq:psi_symm_cond2}
&= (I-Q) F^q(B_\phi u + B_\phi \psi(u,p), p),
\end{align}
by the symmetry property~\eqref{eq:F_symmetry}. By comparing~\eqref{eq:psi_symm_cond1} and~\eqref{eq:psi_symm_cond2} one sees that
\begin{align}\label{eq:psi_symmetry}
	\psi(B_\phi u,p) = B_\phi\psi(u,p)
\end{align}
for all~$u\in N$ due to the uniqueness of~$\psi$.
Furthermore, the definition~\eqref{eq:Phi_def} of~$\Phi$ implies that for all~$v\in N$
\begin{align}
\nonumber
\Phi(B_\phi v,p) &= QF^q(B_\phi v + \psi(B_\phi v, p), p)\\
\nonumber
&=QF^q(B_\phi v + B_\phi\psi(v,p),p)\\
\nonumber
&=QF^q(B_\phi[v+\psi(v,p)],p)\\
\nonumber
&=QB_\phi F^q(v + \psi(v,p),p)\\
\nonumber
&=B_\phi QF^q(v + \psi(v,p),p)\\
\label{eq:Phi_symmetry}
&=B_\phi \Phi(v,p),
\end{align}
where we have used~\eqref{eq:psi_symmetry} and~\eqref{eq:F_symmetry}.

Since $\Phi\colon N\times V\to N$, where~$V$ is neighborhood of~$p_0$ in $\mathcal P$, for small enough $\abs{a}, \abs{b}$, we can write
\begin{align*}
	\Phi(au_\ell + b w_\ell, p) = c u_\ell + d w_\ell
\end{align*}
for each fixed $p\in V$ and some $c,d\in \R$. Now, by applying $B_\phi$ to both sides of the equation and using the symmetry property~\eqref{eq:Phi_symmetry}, a straight-forward calculation confirms that
\begin{align*}
	\Phi([a\cos(2\pi \ell\phi) + b \sin(2\pi \ell \phi)] u_\ell + [-a\sin(2\pi \ell \phi) + b \cos(2\pi \ell \phi)]w_\ell,p) \\= [c\cos(2\pi \ell\phi) + d \sin(2\pi \ell \phi)] u_\ell + [-c\sin(2\pi \ell \phi) + d \cos(2\pi \ell \phi)]w_\ell.
\end{align*}
Using~\eqref{eq:hatPhi_def}, this yields the result.
\end{proof}

Next, we show that $\hat\Phi$ does not change the angle of a vector but only multiplies its length by a (possibly negative) factor.

To achieve this, we first define the space of odd functions $O$:
\begin{align*}
	O := \{ f \in X: f(x) = -f(-x)\}.
\end{align*}

\begin{lemma}\label{lem:F_odd}
Let $p\in \mathcal P$ and $v\in O$. Then $F^q(v,p)\in O$.
\end{lemma}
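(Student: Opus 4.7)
The approach is to set $\eta := \Psi^q + v$ and exploit the fact that $\eta$ is odd on $\S$ modulo $2\pi$. Indeed, $\Psi^q(-x) = -2\pi q x \equiv -\Psi^q(x)\pmod{2\pi}$ since $q\in\N$, and $v\in O$ gives $v(-x) = -v(x)$. Because every nonlinearity in $F^q$ is of the form $\sin(\cdot)$ and $\sin$ is $2\pi$-periodic and odd, identities such as
\begin{align*}
\sin(\eta(-y)-\eta(-x)) &= -\sin(\eta(y)-\eta(x)),\\
\sin(\eta(-z)+\eta(-y)-2\eta(-x)) &= -\sin(\eta(z)+\eta(y)-2\eta(x)),
\end{align*}
hold, and analogously for the quadruplet sine term.

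The plan is then to decompose $F^q(v,p)(x) = I_1(x) - C_1 + \lambda(I_2(x) - C_2) + \mu(I_3(x) - C_3)$, where $I_1, I_2, I_3$ are the $x$-dependent pairwise, triplet, and quadruplet integrals from \eqref{eq:pd_problem} and $C_1, C_2, C_3$ are the corresponding $x$-independent correction integrals. For each $I_j$, I would evaluate at $-x$, perform the substitutions $y\mapsto -y$, $z\mapsto -z$, $w\mapsto -w$ on the circle (which preserves the measure on $\S$), and then invoke the evenness of $W_r$ (so that, e.g., $W_r(-x+y) = W_r(x-y)$ and $W_r(-z-y+2x) = W_r(z+y-2x)$) together with the oddness identity for $\sin\circ\eta$ above. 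This will yield $I_j(-x) = -I_j(x)$ for $j=1,2,3$.

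For each constant term $C_j$ I would apply the same sign-reversing substitutions in all integration variables: the integrand is then mapped to its negative (evenness of $W_r$ together with the oddness of $\sin\circ\eta$ in each of its arguments), which gives $C_j = -C_j$, hence $C_j = 0$. Equivalently, each $C_j$ is the integral of a function which is odd under simultaneous reflection of all integration variables. Adding everything, $F^q(v,p)(-x) = -F^q(v,p)(x)$, i.e., $F^q(v,p)\in O$.

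The only step that requires slight care is the observation that $\Psi^q$ is not literally odd as a real-valued function (it is linear on $[0,1)$ and not $2\pi$-periodic), but the defect $\Psi^q(-x)+\Psi^q(x)$ equals an integer multiple of $2\pi$, which is invisible to $\sin$; I do not anticipate any further obstacle. The result is essentially a bookkeeping consequence of the odd symmetry of $\sin$, the evenness of $W_r$, and the structure of the higher-order coupling kernels, which were designed precisely so as to preserve $q$-twisted states.
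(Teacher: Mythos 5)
Your proof is correct and follows essentially the same route as the paper, which disposes of the lemma by exactly this kind of sign-reversing (linear) substitution of the integration variables combined with the evenness of $W_r$ and the oddness and $2\pi$-periodicity of $\sin$; your careful remark that $\Psi^q(-x)+\Psi^q(x)$ is only a multiple of $2\pi$ (invisible to $\sin$) and your observation that the $x$-independent correction integrals vanish are precisely the bookkeeping details the paper leaves implicit.
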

This lemma follows by a calculation using linear substitutions of the integrating variables that appear in the definition of $F^q$ and $F$.

Given this lemma, we can consider the restriction of $F^q$ to the space of odd functions $O$:
\begin{align*}
	F^{q,\dagger}\colon X^\dagger\times \mathcal P\to Z^\dagger, \qquad F^{q,\dagger}(v,p) := F^q(v,p),
\end{align*}
where both $X^\dagger = Z^\dagger = O$. We use the symbol~$\dagger$ whenever we are referring to a function or a space that is reduced to~$O$.

Under this restriction $F^{q,\dagger}$ inherits smoothness from $F$ and $F^q$. Following the notation from Section~\ref{sec:LyapunovSchmidt} we denote $N^\dagger := \mathcal N( F^{q,\dagger}_v(0,p_0)) = \operatorname{span}\{u_\ell\}$. Moreover, there are decompositions
\begin{align*}
	X^\dagger = N^\dagger \oplus X_0^\dagger\qquad \text{and} \qquad Z^\dagger = R^\dagger \oplus Z_0^\dagger,
\end{align*}
where $R^\dagger$ is the range of $F^{q,\dagger}_v(0,p_0)$ and we choose $X^\dagger_0 = R^\dagger$ and $Z^\dagger_0 = N^\dagger$. Additionally, we denote $Q^\dagger$ for the restricted projection of $Q$ from $X^\dagger$ onto $N^\dagger$. Now we can perform another Lyapunov-Schmidt reduction on the space of odd functions:

\begin{lemma}[\cite{Kielhofer2012}]
Solving the infinite problem
\begin{align*}
	F^{q,\dagger}(0,p) = 0
\end{align*}
is equivalent to solving
\begin{align*}
	\Phi^\dagger(v,p) = 0,
\end{align*}
where $\Phi^\dagger\colon U^\dagger\times V \to Z^\dagger_0$ and $(v,p)\in U^\dagger\times V\subset N^\dagger\times\mathcal P$. Here, $\Phi^\dagger(0,p_0) = 0$ and $\Phi^\dagger$ is defined by
\begin{align}\label{eq:Phi_red_def}
	\Phi^\dagger(v,p) := Q^\dagger F^{q,\dagger}(v + \psi^\dagger(v,p),p),
\end{align}
where $\psi^\dagger\colon N^\dagger \times V\to X^\dagger_0$ is a unique function satisfying $\psi^\dagger(0,p_0) = 0$. It is implicitly defined to be the unique solution of the equation
\begin{align}\label{eq:psi_red_def}
	(I-Q^\dagger)F^{q,\dagger}(v+\psi^\dagger(v,p),p)=0
\end{align}
in a neighborhood of $(0,p_0)$.
\end{lemma}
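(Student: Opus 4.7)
The plan is to apply the abstract Lyapunov--Schmidt reduction (Theorem~\ref{thm:LyapunovSchmidt}, citing \cite{Kielhofer2012}) to the restricted map $F^{q,\dagger}$, and verify that all of the hypotheses behind that theorem survive the restriction to the closed subspace $O \subset H_0^1$. The key enabling fact is Lemma~\ref{lem:F_odd}, which ensures that $F^q$ maps $O$ into $O$ so that $F^{q,\dagger}$ is indeed well-defined as a map $X^\dagger \times \mathcal P \to Z^\dagger$; smoothness transfers from $F^q$ (Appendix~\ref{sec:derivative}) because $O$ is a closed linear subspace of $H_0^1$.

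First, I would identify the Fredholm structure of $F^{q,\dagger}_v(0,p_0)$. Because $u_\ell(-x) = -u_\ell(x)$ while $w_\ell(-x) = w_\ell(x)$, we have $w_\ell \notin O$ and $N^\dagger = N \cap O = \operatorname{span}\{u_\ell\}$ is one-dimensional. The basis $\{u_k : k \ge 1\}$ is a Schauder basis of $O$, and by~\eqref{eq:F_ev} the operator $F^{q,\dagger}_v(0,p_0)$ acts on this basis by multiplication by the eigenvalues $c_1(q,k,p_0)$, with exactly $c_1(q,\ell,p_0) = 0$ vanishing and the remaining ones bounded away from zero by Assumption~3. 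Therefore the range $R^\dagger = \operatorname{cl}(\operatorname{span}\{u_k : k \neq \ell\})$ is closed of codimension one in $Z^\dagger$, and $F^{q,\dagger}_v(0,p_0)$ is Fredholm of index $0$. This yields the topological decompositions $X^\dagger = N^\dagger \oplus X_0^\dagger$ and $Z^\dagger = R^\dagger \oplus Z_0^\dagger$ with the stated choices $X_0^\dagger = R^\dagger$, $Z_0^\dagger = N^\dagger$, and the projector $Q^\dagger$ is simply the restriction of $Q$ to $X^\dagger$; restriction is consistent because $Q$ acts diagonally on the basis $\{u_k, w_k\}$ and kills the $w_k$-components, which are all zero on $O$ anyway.

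Second, with these ingredients in place, I would invoke the implicit function theorem exactly as in the proof of Theorem~\ref{thm:LyapunovSchmidt}: since $(I - Q^\dagger) F^{q,\dagger}_v(0,p_0)$ restricted to $X_0^\dagger$ is an isomorphism onto $R^\dagger$, the equation $(I-Q^\dagger) F^{q,\dagger}(v + w, p) = 0$ has a unique solution $w = \psi^\dagger(v,p) \in X_0^\dagger$ in a neighborhood of $(0, p_0)$ with $\psi^\dagger(0,p_0) = 0$. Substituting this back yields~\eqref{eq:Phi_red_def}, and the equivalence of the full and reduced problems follows from the direct-sum decomposition of $Z^\dagger$: $F^{q,\dagger}(v + \psi^\dagger(v,p),p) = 0$ iff both $(I - Q^\dagger)F^{q,\dagger}(\cdot) = 0$ (which holds by construction of $\psi^\dagger$) and $Q^\dagger F^{q,\dagger}(\cdot) = \Phi^\dagger(v,p) = 0$.

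The only genuine obstacle is the bookkeeping in the first step: verifying that the complements $X_0^\dagger$, $Z_0^\dagger$ and the projector $Q^\dagger$ are the honest restrictions of the corresponding objects from the full Lyapunov--Schmidt setup, rather than new data requiring a separate construction. This works because the spectral decomposition~\eqref{eq:F_ev} of the linearization splits cleanly along the parity of the basis elements, so the odd and even subspaces are jointly invariant under $F^{q,\dagger}_v(0,p_0)$ and under $Q$. Once this is noted, the proof reduces to a direct application of Theorem~\ref{thm:LyapunovSchmidt} with $(X, Z, N, R, Q, F^q)$ replaced by $(X^\dagger, Z^\dagger, N^\dagger, R^\dagger, Q^\dagger, F^{q,\dagger})$.
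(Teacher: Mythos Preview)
Your proposal is correct and in spirit matches the paper, which simply states ``This Lemma follows from~\cite{Kielhofer2012}'' without further elaboration; you have spelled out the verification of the Fredholm hypotheses and the implicit-function-theorem step that the paper leaves implicit in that citation. One minor bookkeeping slip: the fact that the remaining eigenvalues $c_1(q,k,p_0)$ for $k\neq\ell$ are bounded away from zero is Assumption~2, not Assumption~3 (the latter is the isolation of the critical eigenvalue from accumulation points).
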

This Lemma follows from~\cite{Kielhofer2012}. We can use it to show the next lemma:

\begin{lemma}\label{lem:hatPhi_noangle}
$\hat\Phi$ does not change the angle of a vector but only multiplies its length by a (possibly negative) factor. To be precise, for all $(a,b)$ in a small neighborhood $U$ of $(0,0)$ and $s\in(-\delta,\delta)$,
\begin{align}\label{eq:hatPhi_multiply}
    \hat \Phi\left(\begin{pmatrix}a\\b\end{pmatrix},s\right) = \hat h((a,b)^\top,s) \begin{pmatrix}a\\b\end{pmatrix},
\end{align}
where $\hat h\colon U\times(-\delta,\delta)\to \R$ is a rotationally invariant function, i.e.
\begin{align*}
    \hat h\left( A_\phi \begin{pmatrix}a\\b\end{pmatrix}, s\right) = \hat h\left(\begin{pmatrix}a\\b\end{pmatrix}, s\right)
\end{align*}
for all $\phi$.
\end{lemma}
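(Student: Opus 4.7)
The strategy is to combine the equivariance of $\hat\Phi$ under $A_\phi$ from Proposition~\ref{prop:hatPhi_symm} with the invariance of odd functions under $F^q$ from Lemma~\ref{lem:F_odd}: first pin down $\hat\Phi$ on the $a$-axis by an oddness argument, then propagate to all of $\R^2$ by rotation. Interpreting $-x$ on $\S=[0,1]/{\sim}$ as $1-x$, one checks that $u_\ell$ is odd and $w_\ell$ is even, so $N^\dagger = N\cap O = \operatorname{span}\{u_\ell\}$; the analogous parity check on the basis $\{u_k,w_k\}_{k\neq\ell}$ of $R$ gives $R^\dagger = R\cap O$, and $Q$ restricts to $Q^\dagger$ on $O$.

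The key step is to show that $\psi(au_\ell,p)=\psi^\dagger(au_\ell,p)$ for small $a$ and $p$ near $p_0$. Since $\psi^\dagger(au_\ell,p)\in X_0^\dagger\subset O$, the argument $au_\ell+\psi^\dagger(au_\ell,p)$ is odd and so, by Lemma~\ref{lem:F_odd}, is $F^q(au_\ell+\psi^\dagger(au_\ell,p),p)$. The reduced equation~\eqref{eq:psi_red_def} states that this image lies in $N^\dagger\subset N$, which is equivalent to $(I-Q)F^q(au_\ell+\psi^\dagger(au_\ell,p),p)=0$. Together with $\psi^\dagger(au_\ell,p)\in X_0^\dagger\subset X_0$, uniqueness in~\eqref{eq:psi_def} forces $\psi(au_\ell,p)=\psi^\dagger(au_\ell,p)$, and consequently $\Phi(au_\ell,p)=\Phi^\dagger(au_\ell,p)\in\operatorname{span}\{u_\ell\}$.

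Unpacking~\eqref{eq:hatPhi_def} for $b=0$ and using $\langle z_2^*,u_\ell\rangle=0$, the previous step gives $\hat\Phi((a,0)^\top,s)=(c(a,s),0)^\top$ for a smooth scalar $c$. The identity $F^q(0,p)=0$ yields $\psi(0,p)=0$ by uniqueness, so $c(0,s)=0$ and we may factor $c(a,s)=a\,\tilde c(a,s)$ for a continuous $\tilde c$; applying~\eqref{eq:hatPhi_symmetry} to a half-turn further shows that $c$ is odd in $a$, so $\tilde c$ is even. For a general $(a,b)^\top$ with $r=\sqrt{a^2+b^2}$, pick $\phi\in\S$ with $A_\phi(r,0)^\top=(a,b)^\top$ (possible since $\phi\mapsto2\pi\ell\phi$ is surjective onto $[0,2\pi]$). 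Then~\eqref{eq:hatPhi_symmetry} gives
\begin{equation*}
\hat\Phi\!\left(\begin{pmatrix}a\\ b\end{pmatrix},s\right) = A_\phi\,\hat\Phi\!\left(\begin{pmatrix}r\\ 0\end{pmatrix},s\right) = \tilde c(r,s)\,A_\phi\begin{pmatrix}r\\ 0\end{pmatrix} = \tilde c(r,s)\begin{pmatrix}a\\ b\end{pmatrix}.
\end{equation*}
Setting $\hat h((a,b)^\top,s):=\tilde c(r,s)$ produces~\eqref{eq:hatPhi_multiply}, and since $r$ is invariant under any $A_\phi$, so is $\hat h$.

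The main obstacle I anticipate is making the identification used in the second paragraph precise: one must verify the compatibilities $R^\dagger = R\cap O$, $N^\dagger=N\cap O$, and $Q|_O=Q^\dagger$, all of which hinge on the parity statement for $\{u_k,w_k\}$ and on $F^q_v(0,p_0)$ acting diagonally on this basis according to~\eqref{eq:F_ev}. Once those identifications are in hand, the coordinate unpacking and the rotation argument are routine.
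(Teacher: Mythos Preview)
Your proof is correct and follows essentially the same route as the paper: both arguments identify $\psi^\dagger$ as the restriction of $\psi$ to odd inputs via uniqueness in~\eqref{eq:psi_def}, use Lemma~\ref{lem:F_odd} to conclude $\hat\Phi_2((a,0)^\top,s)=0$, and then invoke the rotational equivariance of Proposition~\ref{prop:hatPhi_symm} to extend off the $a$-axis. Your version is somewhat more explicit than the paper's---you spell out the factorization $c(a,s)=a\,\tilde c(a,s)$ and the rotation to a general $(a,b)$, whereas the paper simply asserts that $\hat h$ can be chosen once $\hat\Phi_2((a,0)^\top,s)=0$ and $\hat\Phi((0,0)^\top,s)=0$ are known---and the compatibilities $N^\dagger=N\cap O$, $R^\dagger=R\cap O$, $Q|_O=Q^\dagger$ you flag are indeed the ones tacitly used in both proofs.
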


\begin{proof}
First note, that due to Proposition~\ref{prop:hatPhi_symm}, it suffices to show~\eqref{eq:hatPhi_multiply} for $b=0$. Therefore, it is left to show that $\hat\Phi_2((a,0)^\top,s)=0$.
Since both $\psi$ and $\psi^\dagger$ are uniquely defined $\psi^\dagger$ must be the restriction of $\psi$ to the space $O$. In particular,
\begin{align*}
	\psi(v,p) = \psi^\dagger(v,p) \in O
\end{align*}
whenever $v\in O$. This shows that $\psi(au_\ell, p)$ is an odd function. Now, we evaluate $\hat\Phi_2((a,0)^\top,s)$:
\begin{align*}
	\hat \Phi_2((a,0)^\top,s) &= \langle z_2^*, \Phi(a u_\ell, p(s))\rangle\\
	&=\langle z_2^*, QF^q(au_\ell + \psi(a u_\ell, p(s)), p(s))\rangle\\
	&=0,
\end{align*}
because $au_\ell + \psi(au_\ell, p(s))$ is odd and $F^q$ maps odd functions to odd functions, see Lemma~\ref{lem:F_odd}.
Therefore, when $b=0$ in~\eqref{eq:hatPhi_def} and~$a$ and~$s$ are in a small neighborhood of the bifurcation point, we find that $\hat \Phi_2 = 0$. Since $\hat\Phi((0,0)^\top,s)=(0,0)^\top$, we can choose~$\hat h$ such that the claim of the lemma holds. Finally, by Proposition~\ref{prop:hatPhi_symm} it follows that~$\hat h$ has to satisfy the rotational invariance condition.
\end{proof}

Consequently, when looking for zeros of $\hat \Phi((a,b)^\top,s)$, we can restrict ourselves to $b=0$. Given $a,s$ such that $\hat \Phi((a,0)^\top,s)=0$, all other zeros can then be obtained by applying $A_\phi$ to $(a,0)^\top$.
Therefore, we might as well study the problem of finding zeros of
\begin{align*}
    h(a,s) := \hat \Phi((a,0)^\top,s) = \hat \Phi^\dagger(a,s) := \langle z_1^*, \Phi^\dagger(au_\ell, p(s))\rangle.
\end{align*}
In the next section, we Taylor-expand~$h$ to see which zeros it has in a neighborhood of the origin.

\subsection{Taylor Expansion around the Bifurcation Point}\label{sec:h}

In order to determine the type of the bifurcation it is necessary to compute the derivatives of~$h$. Since~$F$ is smooth, as proven in Appendix~\ref{sec:derivative}, the function~$\psi^\dagger$,~$\Phi^\dagger$ and~$\hat\Phi^\dagger$, which originate from the implicit function theorem or are concatenations of smooth functions, are smooth as well. In order to derive expressions for the derivative of $h$ we first need to compute derivatives of $F^{q,\dagger}$. These derivatives are given in the next lemma:

\begin{lemma}\label{lem:F_ders}
The derivative of $F^{q,\dagger}$, evaluated on the basis functions satisfies
\begin{align}\label{eq:F_der1u}
	F^{q,\dagger}_v(0,p)[u_k] = c_1(q,k,p)u_k.
\end{align}
An evaluation of second derivatives of $F^{q,\dagger}$ on the basis elements~$u_k$ yields
\begin{align}
	\label{eq:F_der2uu}
	F^{q,\dagger}_{vv}(0,p)[u_k,u_k] &= c_2(q,k,p) u_{2k}
\end{align}
Further, mixed second derivatives are given by
\begin{align}
	\label{eq:F_der2mixed_uu}
	F^{q,\dagger}_{vv}(0,p)[u_m, u_k] &= c_3(q,m,k,p) u_{m-k} + c_4(q,m,k,p) u_{m+k},
\end{align}
for all $m,k\in \N$ with $m\neq k$. Here, we use the convention $u_{-n} = -u_n$ for $n\in\N$. Furthermore, we find
\begin{align}
	\label{eq:F_der3uuu}
	F^{q,\dagger}_{vvv}(0,p)[u_k,u_k,u_k] &= 3c_5(q,k,p) u_k + c_6(q,k,p) u_{3k},
\end{align}
for $k\in \N$. Here, $c_1,\dots,c_6$ are coefficients that depend on $q,k,m,p$ and the Fourier coefficients $\hat W_r(k)$ of the coupling function. The full expressions for these coefficients can be found in Appendix~\ref{sec:abbreviations}.
\end{lemma}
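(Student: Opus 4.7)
The plan is to compute the Fréchet derivatives of $F^{q,\dagger}$ by differentiating under the integral signs in \eqref{eq:pd_problem} (with $\Psi$ replaced by $\Psi^q + v$) and then evaluating the multilinear integrands on $u_k(x) = \sin(2\pi k x)$. The expansion rests on the angle-sum identity
\begin{align*}
  \sin\bigl(A + h\bigr) = \sin(A)\cos(h) + \cos(A)\sin(h),
\end{align*}
which separates the ``background'' phase $A$ coming from $\Psi^q$ (producing factors of $\cos(2\pi q(\cdot))$ and $\sin(2\pi q(\cdot))$ multiplying the kernels) from the perturbation $h$ built from $v$-differences. Taking $j$ derivatives in $v$ extracts the $j$-linear piece of $h$ in this expansion: the first derivative is linear in the $v$-differences with coefficient $\cos(A)$, the second is bilinear with $-\sin(A)$, and the third is trilinear with $-\cos(A)$.

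Once the directional arguments are set equal to $u_k$, the resulting integrands contain factors such as $\sin(2\pi k(x+a)) - \sin(2\pi k x)$ (and its triplet/quadruplet analogues). I would expand each such factor via angle-sum identities in order to pull the $x$-dependence outside the integral, and then repeatedly apply product-to-sum formulas to turn products of sines and cosines of the integration variables into sums of single cosines. Every integral is then of the form $\int_\S W_r(u)\cos(2\pi n u)\,\d u = \tfrac12\hat W_r(n)$, while all sine counterparts vanish because $W_r$ is even. For the triplet and quadruplet contributions, the preparatory substitution $a = z-x,\ b = y-x$ (and $c = w-x$) exposes the dependence $W_r(a+b)$, resp.\ $W_r(a-b+c)$, after which introducing $u = a+b$ (resp.\ $u = a-b+c$) collapses the double/triple integral to a one-dimensional integral against $W_r$, modulo free integrations that vanish unless the accompanying trigonometric factor is constant in that direction.

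The only thing left after this reduction is to collect the resulting sines by their output frequency. In \eqref{eq:F_der2uu} the output is forced to be $u_{2k}$ alone, because the $k-k=0$ frequency yields a vanishing basis element; in \eqref{eq:F_der2mixed_uu} both $u_{m-k}$ and $u_{m+k}$ survive; and in \eqref{eq:F_der3uuu} the frequencies $\pm k\pm k\pm k$ combine into $u_k$ with multiplicity three (the three choices of which argument contributes the sign flip that annihilates the total frequency) and a single $u_{3k}$ term. The coefficients then emerge as linear combinations of $\hat W_r(q\pm k)$, $\hat W_r(q\pm 2k)$, $\hat W_r(q\pm 3k)$ etc., weighted by $1$, $\lambda$, $\mu$, yielding precisely the expressions collected in Appendix~\ref{sec:abbreviations} as $c_1,\dots,c_6$.

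The main obstacle is not conceptual but organizational: each of the three interaction kernels must be processed independently at each of the three derivative orders, and within each combination one must track the signs, the parity arguments that kill certain sine integrals, and the degeneracies that arise when frequencies cancel (such as the factor of three in \eqref{eq:F_der3uuu}). Since the algebra is entirely mechanical, I would record only the final shape of each contribution in the main text and relegate the explicit polynomial expressions for $c_1,\dots,c_6$ to Appendix~\ref{sec:abbreviations}.
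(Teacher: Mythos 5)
Your proposal is correct and follows essentially the same route as the paper: the paper's proof consists of inserting the basis functions $u_k$ into the explicit $n$-linear representation of the Fr\'echet derivatives of $F$ (with $\sin^{[n]}$ of the background phase $\Psi^q$ multiplying products of direction-differences, as derived in Appendix~\ref{sec:derivative}), and then evaluating the resulting trigonometric integrals against $W_r$ via its Fourier coefficients $\hat W_r$, exactly as you describe. Your frequency bookkeeping (the vanishing $k-k=0$ mode, the surviving $u_{m\pm k}$ terms, and the multiplicity three at frequency $k$ in the third derivative) matches the structure of $c_1,\dots,c_6$ in Appendix~\ref{sec:abbreviations}.
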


\begin{proof}
    This lemma can be proven by inserting the basis functions into the representation of the derivative of~$F$, derived in Appendix~\ref{sec:derivative}.
\end{proof}

Now, we can use Lemma~\ref{lem:F_ders} to calculate derivatives of $\psi^\dagger$ and $\Phi^\dagger$. This follows the lines of~\cite[Section I.6]{Kielhofer2012}.

\begin{lemma}
The derivatives of $\psi^\dagger$ satisfy
\begin{align}
    \label{eq:psi_der1res}
	\psi^\dagger_v(0,p_0)u_\ell &= 0,\\
    \label{eq:psi_der2uu}
	\psi^\dagger_{vv}(0,p_0)[u_\ell,u_\ell] &= - \frac{c_2(q,\ell,p_0)}{c_1(q,2\ell,p_0)} u_{2\ell}.
\end{align}
\end{lemma}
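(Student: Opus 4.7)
The plan is to differentiate the defining implicit equation
\begin{align*}
(I-Q^\dagger)F^{q,\dagger}(v+\psi^\dagger(v,p),p)=0
\end{align*}
once and twice with respect to $v$ in the direction $u_\ell$, evaluate at $(0,p_0)$, and then invert $F^{q,\dagger}_v(0,p_0)$ on $X_0^\dagger=R^\dagger$ using its diagonal action on the basis $\{u_k\}_{k\geq 1}$ from Lemma~\ref{lem:F_ders}. The key structural facts that make this work are: $\psi^\dagger$ maps into $X_0^\dagger=R^\dagger$; by Lemma~\ref{lem:F_ders} equation~\eqref{eq:F_der1u} together with the bifurcation assumption $c_1(q,\ell,p_0)=0$, the restriction $F^{q,\dagger}_v(0,p_0)|_{R^\dagger}$ is a diagonal operator with nonzero eigenvalues $c_1(q,k,p_0)$ for $k\neq\ell$; and $(I-Q^\dagger)$ acts as the identity on $R^\dagger$.

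For the first-order identity, the chain rule gives
\begin{align*}
(I-Q^\dagger)F^{q,\dagger}_v(0,p_0)\bigl[u_\ell+\psi^\dagger_v(0,p_0)u_\ell\bigr]=0.
\end{align*}
The summand $F^{q,\dagger}_v(0,p_0)u_\ell$ vanishes by~\eqref{eq:F_der1u} since $c_1(q,\ell,p_0)=0$. The remaining term $F^{q,\dagger}_v(0,p_0)\psi^\dagger_v(0,p_0)u_\ell$ lies in $R^\dagger$, so $(I-Q^\dagger)$ can be dropped, and injectivity of $F^{q,\dagger}_v(0,p_0)|_{R^\dagger}$ forces $\psi^\dagger_v(0,p_0)u_\ell=0$, which is~\eqref{eq:psi_der1res}.

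Differentiating once more and evaluating at $(0,p_0)$ yields
\begin{align*}
(I-Q^\dagger)\Bigl[F^{q,\dagger}_{vv}(0,p_0)\bigl[u_\ell+\psi^\dagger_v u_\ell,\,u_\ell+\psi^\dagger_v u_\ell\bigr]+F^{q,\dagger}_v(0,p_0)\psi^\dagger_{vv}(0,p_0)[u_\ell,u_\ell]\Bigr]=0.
\end{align*}
Substituting the first-order result and applying Lemma~\ref{lem:F_ders} equation~\eqref{eq:F_der2uu} simplifies this to
\begin{align*}
c_2(q,\ell,p_0)\,u_{2\ell}+F^{q,\dagger}_v(0,p_0)\psi^\dagger_{vv}(0,p_0)[u_\ell,u_\ell]=0,
\end{align*}
where we have used that $u_{2\ell}\in R^\dagger$ (since $2\ell\neq\ell$) so $(I-Q^\dagger)$ is the identity on it. Expanding $\psi^\dagger_{vv}(0,p_0)[u_\ell,u_\ell]=\sum_{k\neq\ell}a_k u_k\in R^\dagger$ and using that $F^{q,\dagger}_v(0,p_0)$ multiplies $u_k$ by $c_1(q,k,p_0)$, matching coefficients gives $a_{2\ell}=-c_2(q,\ell,p_0)/c_1(q,2\ell,p_0)$ and $a_k=0$ otherwise, which is~\eqref{eq:psi_der2uu}.

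The main obstacle is merely bookkeeping: carefully applying the chain rule, tracking which range or null space each term lives in, and confirming the diagonal inversion is legitimate. There is no analytic difficulty, since the restriction to odd functions preserves the eigenbasis $\{u_k\}$ automatically, and nondegeneracy of the inversion on $R^\dagger$ is guaranteed by Assumption~2.
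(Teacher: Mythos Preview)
Your proof is correct and follows essentially the same route as the paper: differentiate the defining implicit equation~\eqref{eq:psi_red_def} once and twice in~$v$, evaluate at $(0,p_0)$, use Lemma~\ref{lem:F_ders} together with $c_1(q,\ell,p_0)=0$, and invert the bijective diagonal operator $F^{q,\dagger}_v(0,p_0)\colon X_0^\dagger\to R^\dagger$. The only cosmetic difference is that you make the diagonal inversion explicit via a basis expansion, whereas the paper simply invokes bijectivity of the restricted linearization.
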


\begin{proof}

Taking the derivative of~\eqref{eq:psi_red_def} with respect to $v$ yields
\begin{align}\label{eq:psi_der1}
	(I-Q^\dagger)F^{q,\dagger}_v(v + \psi^\dagger(v,p),p)[v_1 + \psi^\dagger_v(v,p)v_1] = 0
\end{align}
for all $v_1\in N^\dagger$. Now, we insert $v=0$ and $p = p_0$ into~\eqref{eq:psi_der1}. Noting that $F^{q,\dagger}_v(0,p_0)v_1 = 0$ and $Q^\dagger F^{q,\dagger}_v(0,p_0)=0$, we are left with $F^{q,\dagger}_v(0,p) \psi^\dagger_v(0,p_0)v_1 = 0$. Since $\psi^\dagger$ maps $N^\dagger$ into $X^\dagger_0$ and $F^{q,\dagger}_v(0,p)$ regarded as a map from $X^\dagger_0$ to $R^\dagger$ is bijective, we obtain~\eqref{eq:psi_der1res}.

Differentiating~\eqref{eq:psi_der1} once more with respect to~$v$ gives
\begin{align*}
	&(I-Q^\dagger)F^{q,\dagger}_{vv}(v+\psi^\dagger(v,p),p)[v_1 + \psi^\dagger_v(v,p)v_1, v_2 + \psi^\dagger_v(v,p)v_2]\\
	 &\quad+ (I-Q^\dagger)F^{q,\dagger}_v(v + \psi^\dagger(v,p),p)\psi^\dagger_{vv}(v,p)[v_1,v_2] = 0.
\end{align*}
for all $v_1,v_2\in N^\dagger$. Again, by inserting $v=0$ and $p = p_0$ into the previous equation we obtain
\begin{align}\label{eq:psi_der2res}
	(I-Q^\dagger)F^{q,\dagger}_{vv}(0,p_0)[v_1,v_2] + F^{q,\dagger}_v(0,p_0)\psi^\dagger_{vv}(0,p_0)[v_1,v_2] = 0
\end{align}
for all $v_1,v_2\in N^\dagger$. Now, we compute $\psi_{vv}^\dagger(0,p_0)[u_\ell,u_\ell]$ by choosing $v_1=v_2=u_\ell$ in~\eqref{eq:psi_der2res} and using~\eqref{eq:F_der2uu}. We obtain
\begin{align*}
	F^{q,\dagger}_v(0,p_0)\psi^\dagger_{vv}(0,p_0)[u_\ell,u_\ell] = -c_2(q,\ell,p_0)u_{2\ell}.
\end{align*}
Therefore, by noting that $\psi^\dagger_{vv}(0,p_0)[u_\ell,u_\ell]\in X^\dagger_0$, considering $F^{q,\dagger}_v(0,p_0)\colon X_0^\dagger\to R^\dagger$ as an invertible map and using~\eqref{eq:F_der1u}, we are left with~\eqref{eq:psi_der2uu}.
\end{proof}

Now we can use these derivatives to calculate the derivatives of~$\hat \Phi^\dagger$:
\begin{lemma}\label{lem:hatPhi_der}
$\hat\Phi^\dagger$ satisfies
\begin{align*}
    \hat\Phi^\dagger(0,0)&=0,   &\hat\Phi^\dagger_a(0,0)&=0,\\
    \hat\Phi^\dagger_{aa}(0,0)&=0, &\hat\Phi^\dagger_{aaa}(0,0)&=6\gamma_1,
\end{align*}
where
\begin{align}\label{eq:gamma1def}
	\gamma_1 := \frac{1}{2} \left( c_5(q,\ell,p_0) - \frac{c_2(q,\ell,p_0)c_3(q,2\ell,\ell,p_0)}{c_1(q,2\ell,p_0)} \right).
\end{align}
\end{lemma}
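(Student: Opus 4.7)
The plan is to apply the chain rule to $\hat\Phi^\dagger(a,s)=\langle z_1^*,\Phi^\dagger(au_\ell,p(s))\rangle$ at $(a,s)=(0,0)$, working through the composition $\Phi^\dagger(v,p)=Q^\dagger F^{q,\dagger}(v+\psi^\dagger(v,p),p)$. Three observations drive every simplification: (i) the bifurcation condition $c_1(q,\ell,p_0)=0$; (ii) $Q^\dagger$ projects onto $N^\dagger=\operatorname{span}\{u_\ell\}$ along $R^\dagger$, so $Q^\dagger u_j=0$ for every $j\neq\ell$ and $Q^\dagger F^{q,\dagger}_v(0,p_0)$ vanishes on $X^\dagger_0$; (iii) the already-computed data $\psi^\dagger(0,p_0)=0$, $\psi^\dagger_v(0,p_0)u_\ell=0$ from \eqref{eq:psi_der1res}, and $\psi^\dagger_{vv}(0,p_0)[u_\ell,u_\ell]=-\tfrac{c_2(q,\ell,p_0)}{c_1(q,2\ell,p_0)}u_{2\ell}$ from \eqref{eq:psi_der2uu}. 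The identity $\hat\Phi^\dagger(0,0)=0$ follows from $F^{q,\dagger}(0,p_0)=0$ because $\Psi^q$ is an equilibrium, and $\hat\Phi^\dagger_a(0,0)=\langle z_1^*,c_1(q,\ell,p_0)Q^\dagger u_\ell\rangle=0$ by (i) together with \eqref{eq:F_der1u}.

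For the second derivative the chain rule produces
\begin{equation*}
\Phi^\dagger_{vv}(0,p_0)[u_\ell,u_\ell]=Q^\dagger F^{q,\dagger}_{vv}(0,p_0)[u_\ell,u_\ell]+Q^\dagger F^{q,\dagger}_v(0,p_0)\psi^\dagger_{vv}(0,p_0)[u_\ell,u_\ell].
\end{equation*}
By \eqref{eq:F_der2uu} the first summand equals $c_2(q,\ell,p_0)Q^\dagger u_{2\ell}=0$, and the second vanishes by (ii) because $\psi^\dagger_{vv}(0,p_0)[u_\ell,u_\ell]\in X^\dagger_0$. Hence $\hat\Phi^\dagger_{aa}(0,0)=0$.

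The substantive step is the third derivative. Differentiating the second-derivative identity once more and evaluating at $(0,p_0)$, all terms containing $\psi^\dagger_v(0,p_0)u_\ell=0$ disappear, and the contribution $Q^\dagger F^{q,\dagger}_v(0,p_0)\psi^\dagger_{vvv}(0,p_0)[u_\ell,u_\ell,u_\ell]$ vanishes by (ii), so $\psi^\dagger_{vvv}$ is never needed. What remains is
\begin{equation*}
\Phi^\dagger_{vvv}(0,p_0)[u_\ell,u_\ell,u_\ell]=Q^\dagger F^{q,\dagger}_{vvv}(0,p_0)[u_\ell,u_\ell,u_\ell]+3\,Q^\dagger F^{q,\dagger}_{vv}(0,p_0)\bigl[u_\ell,\psi^\dagger_{vv}(0,p_0)[u_\ell,u_\ell]\bigr],
\end{equation*}
where the factor $3$ records the three symmetric placements of the $\psi^\dagger_{vv}$-slot among the three directions $u_\ell$ (one from differentiating the $F^{q,\dagger}_v\psi^\dagger_{vv}$ block and two from the $F^{q,\dagger}_{vv}$ block). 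Substituting \eqref{eq:F_der3uuu}, \eqref{eq:psi_der2uu}, and \eqref{eq:F_der2mixed_uu} with $(m,k)=(2\ell,\ell)$, and projecting onto $\operatorname{span}\{u_\ell\}$ (which discards the $u_{3\ell}$-contributions arising from both $c_6$ and $c_4$), this equals
\begin{equation*}
\left(3c_5(q,\ell,p_0)-\frac{3c_2(q,\ell,p_0)c_3(q,2\ell,\ell,p_0)}{c_1(q,2\ell,p_0)}\right)u_\ell=6\gamma_1 u_\ell
\end{equation*}
with $\gamma_1$ as in \eqref{eq:gamma1def}. Pairing with $\langle z_1^*,u_\ell\rangle=2\int_\S\sin^2(2\pi\ell x)\,\d x=1$ then yields $\hat\Phi^\dagger_{aaa}(0,0)=6\gamma_1$. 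The main hazard is the chain-rule bookkeeping for the third derivative: one must verify the combinatorial factor $3$ on the cross term and confirm that every contribution which factors through $F^{q,\dagger}_v(0,p_0)|_{X^\dagger_0}$, most importantly the $\psi^\dagger_{vvv}$-term, is indeed annihilated by $Q^\dagger$, so that only the two explicit expressions above survive projection.
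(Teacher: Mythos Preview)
Your proof is correct and follows essentially the same route as the paper: you differentiate $\Phi^\dagger(v,p)=Q^\dagger F^{q,\dagger}(v+\psi^\dagger(v,p),p)$ by the chain rule, evaluate at $(0,p_0)$ using $\psi^\dagger_v(0,p_0)u_\ell=0$ and the fact that $Q^\dagger$ annihilates the range of $F^{q,\dagger}_v(0,p_0)$, and then plug in \eqref{eq:F_der2uu}, \eqref{eq:F_der2mixed_uu}, \eqref{eq:F_der3uuu}, and \eqref{eq:psi_der2uu} exactly as the paper does. Your explicit remarks on why the $Q^\dagger F^{q,\dagger}_v(0,p_0)\psi^\dagger_{vv}$ and $Q^\dagger F^{q,\dagger}_v(0,p_0)\psi^\dagger_{vvv}$ terms vanish make the argument slightly more self-contained than the paper's version, but the logic is identical.
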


\begin{proof}
By differentiating~\eqref{eq:Phi_red_def} with respect to $v$ we get
\begin{align}
	\label{eq:Phi_der1gen}
	\Phi^\dagger_v(v,p)v_1 &= Q^\dagger F^{q,\dagger}_v(v+\psi^\dagger(v,p),p)[v_1 + \psi^\dagger_v(v,p)v_1],\\
	\nonumber
	\begin{split}
	\Phi^\dagger_{vv}(v,p)[v_1,v_2] &= Q^\dagger F^{q,\dagger}_{vv}(v+\psi^\dagger(v,p)[v_1 + \psi^\dagger_v(v,p)v_1,v_2 + \psi^\dagger_v(v,p)v_2]\\
	& \quad + Q^\dagger F^{q,\dagger}_v(v+\psi^\dagger(v,p),p)\psi^\dagger_{vv}(v,p)[v_1,v_2],
	\end{split}\\
	\nonumber
	\begin{split}
	\Phi^\dagger_{vvv}(v,p)[v_1,v_2,v_3] &= Q^\dagger F^{q,\dagger}_{vvv}(v + \psi^\dagger(v,p),p)[v_1 + \psi_v^\dagger(v,p)v_1, v_2 + \psi_v^\dagger(v,p)v_2, v_3 + \psi^\dagger_v(v,p)v_3]\\
	&\quad + Q^\dagger F^{q,\dagger}_{vv}(v + \psi^\dagger(v,p),p)[v_1 + \psi^\dagger_v(v,p)v_1, \psi^\dagger_{vv}(v,p)[v_2,v_3]]\\
	&\quad + Q^\dagger F^{q,\dagger}_{vv}(v + \psi^\dagger(v,p),p)[v_2 + \psi^\dagger_v(v,p)v_2, \psi^\dagger_{vv}(v,p)[v_1,v_3]]\\
	&\quad + Q^\dagger F^{q,\dagger}_{vv}(v + \psi^\dagger(v,p),p)[v_3 + \psi^\dagger_v(v,p)v_3, \psi^\dagger_{vv}(v,p)[v_1,v_2]]\\
	&\quad + Q^\dagger F^{q,\dagger}_v(v + \psi^\dagger(v,p),p)\psi^\dagger_{vvv}(v,p)[v_1,v_2,v_3]
	\end{split}
\end{align}
for all $v_1,v_2,v_3\in N^\dagger$. Evaluating these derivatives at $v=0$ and $p = p_0$ and using~\eqref{eq:psi_der1res} yields
\begin{subequations}
\begin{align}
	\label{eq:Phi_der1}
	\Phi^\dagger_v(0,p_0)v_1 &= 0,\\
	\label{eq:Phi_der2}
	\Phi^\dagger_{vv}(0,p_0)[v_1,v_2] &= Q^\dagger F^{q,\dagger}_{vv}(0, p_0)[v_1,v_2],\\
	\label{eq:Phi_der3}
	\begin{split}
	\Phi^\dagger_{vvv}(0,p_0)[v_1,v_2,v_3] &= Q^\dagger F^{q,\dagger}_{vvv}(0,p_0)[v_1,v_2,v_3]\\
	&\quad  + Q^\dagger F^{q,\dagger}_{vv}(0,p_0)[v_1,\psi^\dagger_{vv}(0,p_0)[v_2,v_3]]\\
	&\quad + Q^\dagger F^{q,\dagger}_{vv}(0,p_0)[v_2,\psi^\dagger_{vv}(0,p_0)[v_1,v_3]]\\
	&\quad + Q^\dagger F^{q,\dagger}_{vv}(0,p_0)[v_3,\psi^\dagger_{vv}(0,p_0)[v_1,v_2]].
	\end{split}
\end{align}
\end{subequations}
By the definition of~$\Phi^\dagger$ we get
\begin{align*}
	\hat\Phi^\dagger(0,0) &= 0
\end{align*}
By~\eqref{eq:Phi_der1} we get
\begin{align*}
	\hat\Phi^\dagger_a(0,0) &=  \langle z_1^*, \Phi_v(0,p_0)u_\ell\rangle = 0.
\end{align*}
Using~\eqref{eq:Phi_der2} and~\eqref{eq:F_der2uu}, we obtain
\begin{align*}
	\hat \Phi^\dagger_{aa}(0,0) &= \langle z_1^*, \Phi^\dagger_{vv}(0,p_0)[u_\ell,u_\ell]\rangle = \langle z_1^*, Q^\dagger F^{q,\dagger}_{vv}(0, p_0)[u_\ell,u_\ell]\rangle = \langle z_1^*, 0\rangle =  0.
\end{align*}
Using~\eqref{eq:Phi_der3},~\eqref{eq:F_der3uuu},~\eqref{eq:psi_der2uu} and~\eqref{eq:F_der2mixed_uu} yields
\begin{align*}
	\Phi^\dagger_{vvv}(0,p_0)[u_\ell,u_\ell,u_\ell] &= Q^\dagger F^{q,\dagger}_{vvv}(0,p_0)[u_\ell,u_\ell,u_\ell]\\
	&\quad + 3Q^\dagger F^{q,\dagger}_{vv}(0,p_0)[u_\ell, \psi^\dagger_{vv}(0,p_0)[u_\ell,u_\ell]]\\
	&=Q^\dagger F^{q,\dagger}_{vvv}(0,p_0)[u_\ell,u_\ell,u_\ell]\\
	&\quad - \frac{3c_2(q,\ell,p_0)}{c_1(q,2\ell,p_0)}Q^\dagger F^{q,\dagger}_{vv}(0,p_0)[u_\ell, u_{2\ell}]\\
	&=3 c_5(q,\ell,p_0)u_\ell -\frac{3c_2(q,\ell,p_0)c_3(q,2\ell,\ell,p_0)}{c_1(q,2\ell,p_0)} u_\ell\\
\end{align*}
Therefore, $\hat\Phi^\dagger_{aaa}(0,0) = 6\gamma_1$.

\end{proof}

Now, we compute derivatives involving~$s$.

\begin{lemma}\label{lem:hatPhi_mixedder}
$\hat\Phi^\dagger$ satisfies
\begin{align*}
    \hat \Phi^\dagger_s(0,0)=0, \qquad\hat \Phi^\dagger_{as}(0,0) = \gamma_2,
\end{align*}
where 
\begin{align}\label{eq:gamma2def}
	\gamma_2 := \frac{\d}{\d s}c_1(q,\ell,p(s))\Big|_{s=0}.
\end{align}
\end{lemma}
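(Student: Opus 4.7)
The plan is to exploit the persistence of the $q$-twisted state along the entire parameter curve. Because $\Psi^q$ is an equilibrium of~\eqref{eq:pd_problem} for every $p\in\mathcal P$, we have $F^{q,\dagger}(0,p)\equiv 0$ as a function of $p$. In particular $(I-Q^\dagger)F^{q,\dagger}(0,p)=0$ for all $p$ in a neighborhood of $p_0$, so the uniqueness clause of the implicit function theorem applied to~\eqref{eq:psi_red_def} forces $\psi^\dagger(0,p)\equiv 0$. Substituting this into~\eqref{eq:Phi_red_def} gives $\Phi^\dagger(0,p)\equiv 0$, hence $\hat\Phi^\dagger(0,s)\equiv 0$ and differentiating in $s$ yields $\hat\Phi^\dagger_s(0,0)=0$ immediately.

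For the mixed partial, I would use that $\hat\Phi^\dagger(0,s)\equiv 0$ to rewrite $\hat\Phi^\dagger_{as}(0,0)=\tfrac{\d}{\d s}\hat\Phi^\dagger_a(0,s)|_{s=0}$, so it suffices to compute $\hat\Phi^\dagger_a(0,s)$ as a function of $s$. Using~\eqref{eq:Phi_der1gen} together with $\psi^\dagger(0,p)=0$,
\begin{align*}
\Phi^\dagger_v(0,p)u_\ell = Q^\dagger F^{q,\dagger}_v(0,p)\bigl[u_\ell + \psi^\dagger_v(0,p)u_\ell\bigr].
\end{align*}
To eliminate the second term I would show $\psi^\dagger_v(0,p)u_\ell = 0$ for all $p$ near $p_0$. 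Differentiating~\eqref{eq:psi_red_def} in $v$ at $(0,p)$ gives $(I-Q^\dagger)F^{q,\dagger}_v(0,p)[u_\ell+\psi^\dagger_v(0,p)u_\ell]=0$, so this element lies in $N^\dagger$. By~\eqref{eq:F_ev}, the operator $F^{q,\dagger}_v(0,p)$ is diagonal in the basis $\{u_k\}$, mapping $N^\dagger$ into $N^\dagger$ and $X_0^\dagger=R^\dagger$ into $R^\dagger$; since $\psi^\dagger_v(0,p)u_\ell\in R^\dagger$ and $F^{q,\dagger}_v(0,p)$ is invertible on $R^\dagger$ for $p$ near $p_0$ by Assumption~2, the relation forces $\psi^\dagger_v(0,p)u_\ell=0$.

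With that in hand, $\Phi^\dagger_v(0,p)u_\ell = Q^\dagger F^{q,\dagger}_v(0,p)u_\ell = c_1(q,\ell,p)u_\ell$ by~\eqref{eq:F_der1u}, and pairing with $z_1^*$ (together with $\langle z_1^*,u_\ell\rangle = 2\int_\S \sin^2(2\pi\ell x)\,\d x = 1$) gives the clean identity $\hat\Phi^\dagger_a(0,s) = c_1(q,\ell,p(s))$. Differentiating in $s$ at $s=0$ and invoking the definition~\eqref{eq:gamma2def} yields $\hat\Phi^\dagger_{as}(0,0)=\gamma_2$. The main conceptual obstacle is the vanishing of $\psi^\dagger(0,p)$ along the curve; once that persistence argument is made, everything reduces to the chain rule together with the already established spectral structure of $F^{q,\dagger}_v(0,p)$.
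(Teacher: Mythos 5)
Your argument is correct, and for the mixed partial it takes a genuinely different (and slightly stronger) route than the paper. The paper first shows $\psi^\dagger_p(0,p_0)=0$ by differentiating~\eqref{eq:psi_red_def} in $p$, then differentiates the formula~\eqref{eq:Phi_der1gen} for $\Phi^\dagger_v$ with respect to $p$ and evaluates at $(0,p_0)$, so that $\hat\Phi^\dagger_{as}(0,0)$ comes out of the term $Q^\dagger F^{q,\dagger}_{vp}(0,p_0)[u_\ell,p'(0)]$. You instead prove the pointwise identity $\psi^\dagger_v(0,p)u_\ell=0$ for all $p$ along the curve (using that $\psi^\dagger(0,p)\equiv 0$, the diagonal action of $F^{q,\dagger}_v(0,p)$ on the basis $\{u_k\}$, and bounded invertibility of its restriction to $X_0^\dagger$, which Assumption~2 guarantees along $p(s)$ — strictly speaking Assumption~2 is stated on the curve, and its extension to a full neighborhood of $p_0$ needs the Lipschitz remark, but you only need the curve), which yields the clean formula $\hat\Phi^\dagger_a(0,s)=c_1(q,\ell,p(s))$ and then $\gamma_2$ by a single differentiation in $s$. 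This buys you more than the lemma asks for: you identify the $a$-derivative of the reduced map along the whole trivial branch with the critical eigenvalue (the standard ``exchange of stability'' identity), avoid any second mixed derivatives of $F^{q,\dagger}$ and $\psi^\dagger$, and make explicit the step $\psi^\dagger(0,p)\equiv 0$ that the paper uses implicitly when asserting $\hat\Phi^\dagger(0,s)=0$. The paper's computation is the more routine bookkeeping version of the same structural facts; both are valid.
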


\begin{proof}

Since $F^{q,\dagger}(0,p) = F(\Psi^q,p) = 0$ for all $p\in\mathcal P$, we have $\hat\Phi^\dagger(0,s) = 0$ for all $s\in (-\delta,\delta)$. In particular,
\begin{align*}
	\hat \Phi_s(0,0) = 0.
\end{align*}
To compute the mixed derivative, we first differentiate~\eqref{eq:psi_red_def} with respect to~$p$ to obtain
\begin{align}\label{eq:psidef_pder}
	(I-Q^\dagger)F^{q,\dagger}_v(v+\psi^\dagger(v,p),p)\psi_p(v,p) + (I-Q^\dagger)F^{q,\dagger}_p(v+\psi^\dagger(v,p),p) = 0.
\end{align}
Now, we insert $v=0$, $p=p_0$ to get
\begin{align*}
	(I-Q^\dagger)F^{q,\dagger}_v(0,p_0)\psi^\dagger_p(0,p_0) + (I-Q^\dagger)F^{q,\dagger}_p(0,p_0) = 0.
\end{align*}
Again, because $F^{q,\dagger}(0,p) = 0$ for all~$p$ its derivative with respect to~$p$, i.e., the second part of the previous equation, is~$0$. Moreover for $v\in X_0^\dagger$, $(I-Q^\dagger)F^{q,\dagger}_v(0,p_0)v=0$ is equivalent to $v=0$ and therefore
\begin{align}\label{eq:psi_dp}
	\psi^\dagger_p(0,p_0) = 0.
\end{align}
Now, the mixed first derivatives can be computed by differentiating~\eqref{eq:Phi_der1gen} with respect to $p$ as follows:
\begin{align*}
	\Phi^\dagger_{vp}(v,p)[v_1,p_1] &= Q^\dagger F^{q,\dagger}_{vv}(v+\psi^\dagger(v,p),p)[v_1 + \psi_v^\dagger(v,p)v_1]\psi^\dagger_p(v,p)p_1\\
	&\quad + Q^\dagger F^{q,\dagger}_{vp}(v+\psi^\dagger(v,p),p)[v_1 + \psi_v^\dagger(v,p)v_1, p_1]\\
	&\quad + Q^\dagger F^{q,\dagger}_{v}(v+\psi^\dagger(v,p),p)\psi^\dagger_{vp}(v,p)[v_1,p_1]
\end{align*}
for all $v_1\in N^\dagger, p_1\in \R^3$. Evaluating that at $v=0$, $p=p_0$ yields
\begin{align*}
\Phi^\dagger_{vp}(0,p_0)[u_\ell,p_1] &= Q^\dagger F^{q,\dagger}_{vp}(0,p_0)[u_\ell,p_1]\\
&=D_p c_1(q,k,p)\Big|_{p=p_0}p_1u_\ell.
\end{align*}
Consequently,
\begin{align*}
	\hat\Phi_{as}(0,0) &= D_pc_1(q,k,p)\Big|_{p=p_0}p'(0)\\
	&=\frac{\d}{\d s}c_1(q,\ell,p(s))\Big|_{s=0}.
\end{align*}
\end{proof}

Now, we can put these lemmas together and formulate the concluding theorem of this section:
\begin{theorem}\label{thm:TaylorExpansion}
The Taylor-expansion of $\hat \Phi^\dagger(a,s) = h(a,s)$ is
\begin{align}
\nonumber
	\hat\Phi^\dagger(a,s) &= \left[
	\gamma_1 a^3 +\mathcal O(a^4)\right]
	+s\left[\gamma_2 a +\mathcal O(a^2)\right]+\mathcal O(s^2)\\
	\label{eq:hatPhi_cartesian}
	&=a(\gamma_1 a^2 + \gamma_2 s) + \mathcal O(a^4+\abs{s}a^2+s^2)
\end{align}
Here, $\gamma_1$ and $\gamma_2$ are defined as in~\eqref{eq:gamma1def} and~\eqref{eq:gamma2def}, respectively.
\end{theorem}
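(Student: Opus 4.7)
The plan is to apply bivariate Taylor's theorem to $\hat\Phi^\dagger$ at $(a,s)=(0,0)$ and simply read off the surviving coefficients from the derivative values already computed in Lemmas~\ref{lem:hatPhi_der} and~\ref{lem:hatPhi_mixedder}. Smoothness of $\hat\Phi^\dagger$, which is the only prerequisite for Taylor's theorem, is inherited from the chain: $F^{q,\dagger}$ is smooth by Appendix~\ref{sec:derivative}, the implicit function theorem produces a smooth $\psi^\dagger$, $\Phi^\dagger$ is smooth as a composition, and $\hat\Phi^\dagger$ is that composition with the bounded linear functional $z_1^*$ and the smooth parameter curve $p(s)$.

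Before invoking Taylor, I would record one structural identity that eliminates an entire family of terms at once: since $\Psi^q$ is an equilibrium of~\eqref{eq:pd_problem} for every parameter, $F^{q,\dagger}(0,p)=0$ for all $p$, which by uniqueness in~\eqref{eq:psi_red_def} forces $\psi^\dagger(0,p)=0$ and hence $\Phi^\dagger(0,p)=0$, so $\hat\Phi^\dagger(0,s)\equiv 0$ on $(-\delta,\delta)$. Differentiating in $s$ then kills every pure-$s$ derivative at the origin, i.e., $\partial_s^k\hat\Phi^\dagger(0,0)=0$ for all $k\ge 0$. Combined with the values $\hat\Phi^\dagger(0,0)=\hat\Phi^\dagger_a(0,0)=\hat\Phi^\dagger_{aa}(0,0)=0$ and $\hat\Phi^\dagger_{aaa}(0,0)=6\gamma_1$ from Lemma~\ref{lem:hatPhi_der}, together with $\hat\Phi^\dagger_{as}(0,0)=\gamma_2$ from Lemma~\ref{lem:hatPhi_mixedder}, every coefficient of total degree at most three is either known or zero.

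Substituting into the degree-three bivariate Taylor expansion with remainder yields
\begin{equation*}
\hat\Phi^\dagger(a,s) = \gamma_2\,a s + \gamma_1\,a^3 + \tfrac{1}{2}\hat\Phi^\dagger_{aas}(0,0)\,a^2 s + \tfrac{1}{2}\hat\Phi^\dagger_{ass}(0,0)\,a s^2 + \mathcal R(a,s),
\end{equation*}
with $\mathcal R(a,s)=\mathcal O((|a|+|s|)^4)$ by smoothness. Absorbing the (possibly nonzero) $a^2 s$ and $a s^2$ contributions together with the quartic remainder into the asymmetric window $\mathcal O(a^4+|s|a^2+s^2)$ produces~\eqref{eq:hatPhi_cartesian}. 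Since the substantive calculations already sit in the preceding lemmas, no genuine obstacle arises; the only bookkeeping point is to match the asymmetric form of the stated error, for which one combines the standard Taylor remainder with the factorization $\hat\Phi^\dagger(a,s)=a\,g(a,s)$ implied by $\hat\Phi^\dagger(0,s)\equiv 0$.
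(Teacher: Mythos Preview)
Your proposal is correct and follows essentially the same route as the paper: the paper's own proof is a single sentence invoking smoothness (Appendix~\ref{sec:derivative}) together with Lemmas~\ref{lem:hatPhi_der} and~\ref{lem:hatPhi_mixedder}, and your argument is a fleshed-out version of exactly that. Your additional observation that $\hat\Phi^\dagger(0,s)\equiv 0$ (hence the factorization $\hat\Phi^\dagger(a,s)=a\,g(a,s)$) is already implicit in the proof of Lemma~\ref{lem:hatPhi_mixedder} and is a clean way to justify the asymmetric error window $\mathcal O(a^4+|s|a^2+s^2)$, which the paper leaves to the reader.
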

\begin{proof}
    Since~$F$ is smooth, this follows from Lemmas~\ref{lem:hatPhi_der} and~\ref{lem:hatPhi_mixedder}.
\end{proof}

Here,~\eqref{eq:hatPhi_cartesian} is the Taylor expansion of a pitchfork bifurcation. In fact, by using the implicit function theorem, one can show that except for the trivial solution branch $a=0$ there is another curve of equilibria in the neighborhood of the trivial solution. This non-trivial solution branch can be parameterized by a twice continuously differentiable curve $\tau\mapsto (s(\tau), a(\tau))$ for $\tau \in (-\epsilon,\epsilon)$ and $0<\epsilon$ small enough. In this case, $s(0) = a(0) = 0$ and the parameterization can be chosen such that $\dot a(0)=1$. Then, $\dot s(0)=0$ and $\ddot s(0) = -2\gamma_1/\gamma_2$. Here, a dot means differentiation with respect to $\tau$. Given these derivatives, the non-trivial solution curve exists for $s\le 0$ when $\gamma_2/\gamma_1>0$ and for $s\ge 0$ when $\gamma_2/\gamma_1<0$. Moreover, in a neighborhood of the bifurcation point, we can approximate
\begin{align}\label{eq:a_app}
    a\approx a^\mathrm{app}(s) := \sqrt{\frac{-\gamma_2 s}{\gamma_1}}.
\end{align}

Given a $a_0,s_0$ with $h(a_0,s_0) = 0$ we infer all $\tilde a_0, \tilde b_0, \tilde s_0$ with $\tilde a_0 ^2 + \tilde b_0^2 = a_0^2$ and $s_0 = \tilde s_0$ satisfy $\hat \Phi((\tilde a_0, \tilde b_0)^\top, \tilde s_0) = 0$, due to the symmetry~\eqref{eq:hatPhi_symmetry} of $\hat \Phi$.

\vspace{1cm}

\subsection{Higher-Order Equilibria Approximations}\label{sec:higher_expansion}

In the last section, we clarified existence of solutions to $\Phi(v,p(s))=0$. In this section we explain how to use these solutions to derive formulas that can be used to approximate the zeros of~$F^q$ in~$H_0^1$. Given $v\in N$ that solves $\Phi(v,p(s)) = 0$ we know that $F(\Psi^q + v + \psi(v,p(s)),p(s))=0$. For a given~$p(s)$, a zero of~$F$ is therefore given by $Z(v,s) := \Psi^q + v + \psi(v,p(s))$.
A naive~$0$-th order approximation would be given by
\begin{align*}
	Z(v,s) = \Psi^q + \mathcal O(\norm{(v,s)}).
\end{align*}
However, since after neglecting the higher-order terms this approximation coincides with the trivial zero of~$F$, i.e., the $q$-twisted state, this approximation is not useful.

An approximation of first order can be derived by expanding $Z(v,s)$ in terms of~$v$ and~$s$ up to first derivatives. This yields
\begin{align*}
	Z(v,s) &= \Psi^q + v + \psi_v(0,p_0)v + s\psi_p(0,p_0)p'(0) + \mathcal O(\norm{(v,s)}^2)\\
	&=\Psi^q + v + \mathcal O(\norm{(v,s)}^2),
\end{align*}
where we have used that $\psi_v(0,p_0) = 0$ and $\psi_p(0,p_0) = 0$. Neglecting the higher-order terms, we denote
\begin{align}\label{eq:1st_order_expansion}
	Z^1(v,s) = \Psi^q + v
\end{align}
for the first order approximation.

To get a more precise approximation, we assume that $p\colon (-\delta,\delta)\to \mathcal P$ is a smooth curve. Then, we expand up to second order:
\begin{align*}
	\Psi^q + v + \psi(v,p(s)) = \Psi^q + v + \frac 12 (v, p(s)-p(0)) H \begin{pmatrix}
	v\\ p(s)-p(0)
	\end{pmatrix} + O(\norm{(v,s)}^3),
\end{align*}
with
\begin{align*}
 H = \begin{pmatrix}
	\psi_{vv}(0, p_0)[v,v] & \psi_{vp}(0,p_0)[v, p'(0)]\\
	\psi_{vp}(0,p_0)[v, p'(0)] & \psi_{pp}(0,p_0)[p'(0), p'(0)] + \psi_p(0,p_0)p''(0)
	\end{pmatrix}
\end{align*}
First, we take care of the lower right entry of~$H$. Note that $\psi_p(0,p_0) = 0$, as shown in~\eqref{eq:psi_dp}. Next, differentiating~\eqref{eq:psidef_pder} with respect to~$p$ and evaluating at $p = p_0$ and $v = 0$ yields
\begin{subequations}
\begin{align}
\label{eq:psidef_dpp1}
0 &= (I-Q) F_{\Psi\Psi}(\Psi^q,p_0)[\psi_p(0,p_0)p'(0),\psi_p(0,p_0)p'(0)]\\
\label{eq:psidef_dpp2}
&\quad + (I-Q) F_{\Psi p}(\Psi^q,p_0)[\psi_p(0,p_0)p'(0), p'(0)]\\
\label{eq:psidef_dpp3}
&\quad + (I-Q) F_\Psi(\Psi^q,p_0)\psi_{pp}(0,p_0)[p'(0), p'(0)]\\
\label{eq:psidef_dpp4}
&\quad + (I-Q) F_{\Psi p}(\Psi^q,p_0)[\psi_p(0,p_0)p'(0), p'(0)]\\
\label{eq:psidef_dpp5}
&\quad + (I-Q) F_{pp}(\Psi^q, p_0)[p'(0), p'(0)].
\end{align}
\end{subequations}
Again, due to $\psi_p(0,p_0) = 0$, we observe that the terms~\eqref{eq:psidef_dpp1},~\eqref{eq:psidef_dpp2} and~\eqref{eq:psidef_dpp4} equal $0$. Moreover, $F(\Psi^q,p) = 0$ for all $p\in \mathcal P$. Therefore, $F_{pp}(\Psi^q,p_0) = 0$, and thus~\eqref{eq:psidef_dpp5} is $0$ as well. As a consequence
\begin{align*}
	0 = (I-Q) F_\Psi(\Psi^q,p_0)\psi_{pp}(0,p_0)[p'(0), p'(0)].
\end{align*}
Since~$\psi$ maps into~$X_0$ we conclude that $\psi_{pp}(0,p_0)[p'(0), p'(0)] = 0$ and thus $H_{22}=0$.

Second, we look at the off-diagonal entries $H_{21}=H_{12}$. To obtain an expression for $\psi_{vp}(0,p_0)$ we differentiate~\eqref{eq:psi_der1} with respect to~$p$, insert $v = 0$, $p = p_0$ and thereby obtain
\begin{subequations}
\begin{align}
\label{eq:psidef_dpv1}
	0 &= (I-Q) F_{\Psi\Psi}(\Psi^q,p_0)[v_1, \psi_p(0,p_0)p'(0)]\\
\label{eq:psidef_dpv2}
	&\quad + (I-Q) F_{\Psi p}(\Psi^q,p_0)[v_1, p'(0)]\\
\label{eq:psidef_dpv3}
	&\quad + (I-Q) F_\Psi(\Psi^q,p_0)\psi_{vp}(0,p_0)[v_1,p'(0)],
\end{align}
\end{subequations}
for all $v_1\in N$. Since $\psi_p(0,p_0) = 0$, as shown in~\eqref{eq:psi_dp},~\eqref{eq:psidef_dpv1} equals $0$. Moreover, for all $p\in \mathcal P$, $F_\Psi(\Psi^q,p)v\in Z_0$ for all $v\in N$. Therefore, $F_{\Psi p}(\Psi^q,p)[v,p'(0)]\in Z_0$, too, and consequently~\eqref{eq:psidef_dpv2} is~$0$. Again, we conclude that $\psi_{vp}(0,p_0) = 0$.
Therefore,
\begin{align*}
	H = \begin{pmatrix}
	\psi_{vv}(0,p_0)[v,v]& 0\\ 0&0
	\end{pmatrix}
\end{align*}
and thus
\begin{align*}
	\Psi^q + v + \psi(v, p(s)) = \Psi^q + v + \frac 12 \psi_{vv}(0,p_0)[v,v] + O(\norm{(v,s)}^3)
\end{align*}
for all $v\in N$ and $s\in (-\delta,\delta)$ in a neighborhood of $(0,0)$. We denote
\begin{align}\label{eq:2nd_order_expansion}
	Z^2(v,s) = \Psi^q + v + \frac 12 \psi_{vv}(0,p_0)[v,v]
\end{align}
for the second order expansion.

However, until now we have assumed that~$v$ solves $\Phi(v,p(s))=0$. Since this solution~$v$ depends on~$s$, we denote it by~$v(s)$. Unfortunately, for given~$s$ these~$v$ are not known exactly but they also have to be approximated by a function that we call $v^\mathrm{app}$. We derive~$v^\mathrm{app}$ by first computing~$a^\mathrm{app}(s)$ according to~\eqref{eq:a_app}. Then, $\hat \Phi^\dagger(a^\textrm{app}(s),s)\approx 0$ and consequently $\hat \Phi(A_\phi (a^\textrm{app}(s),0)^\top,s)\approx 0$ for all~$\phi$, but for simplicity we keep $\phi = 0$. Now, we define $v^\mathrm{app}(s):=a^\mathrm{app}(s)u_\ell$ and because $\hat \Phi$ is the coordinate version of $\Phi$, it follows that $\Phi(v^\mathrm{app}(s),s)\approx 0$. Given this function~$v^\mathrm{app}(s)$, we use $Z^i(v^\mathrm{app}(s),s)$ for $i\in\{1,2\}$ to approximate the real equilibrium $Z(v(s),s)$. Consequently, the total approximation error is given by
\begin{subequations}
\begin{align}
    \label{eq:app_step1}
    \abs{Z^i(v^\mathrm{app}(s),s)-Z(v(s),s)} &\le \abs{Z^i(v^\mathrm{app}(s),s)-Z^i(v,s)}\\
    \label{eq:app_step2}
    &\quad+ \abs{Z^i(v(s),s)-Z(v(s),s)}.
\end{align}
\end{subequations}
In the remaining part of this subsection, we determine the magnitude of the approximation error of both parts~\eqref{eq:app_step1} and ~\eqref{eq:app_step2} in dependence of the parameter~$s$.

To obtain an estimate for the first part~\eqref{eq:app_step1}, we reconsider the curve $(a(\tau), s(\tau))$ that describes the nontrivial equilibria. Because $\dot a(0)=1$, we can reparameterize the curve such that locally $a(\tau) = \tau$. Then, we still have $s(0) = 0$, $\dot s(0) = 0$ and $\ddot s(0)=-2\gamma_1/\gamma_2$. Due to $\dot a(\tau) = 1$ for all~$\tau$ in a small neighborhood of the origin and the symmetry of $\hat\Phi$ discussed in Lemma \ref{lem:hatPhi_noangle}, we can infer that $s(-\tau) = s(\tau)$. Consequently, $s(\tau)$ has vanishing third derivative at $\tau=0$. Since the curve is smooth that results in $s(\tau) = -\gamma_1/\gamma_2 \tau^2 + \mathcal O(\tau^4)$. Using this representation, one can show that
\begin{align*}
    \abs{a^\textrm{app}(s(\tau))-a(\tau)} = \abs{\sqrt{\frac{-\gamma_2 s(\tau)}{\gamma_1}}-\tau} = \mathcal O(\tau^3) = \mathcal O(s^\frac 32).
\end{align*}
Since $Z^i(v,s)$ is polynomial in~$v$, that then results in
\begin{align*}
    Z^i(v^\mathrm{app}(s),s)-Z^i(v(s),s) = \mathcal O(s^\frac 32).
\end{align*}

To estimate the second part of the error~\eqref{eq:app_step2}, it is important to note that ~$v(s)$ is dependent on~$s$. In fact, due to the pitchfork bifurcation, its dependence can be expressed as $v(s) = \mathcal O(s^\frac 12)$. Combining that with $Z^i(v,s)-Z(v,s) = \mathcal O(\norm{(v,s)}^{i+1})$, as shown above, we find
\begin{align*}
    Z^i(v(s),s)-Z(v(s),s) = \mathcal O(\norm{(v(s),s)}^{i+1}) = \mathcal O(s^\frac{i+1}{2}).
\end{align*}
Putting these two errors together, we conclude that the total approximation error is given by
\begin{align*}
    Z^i(v^\mathrm{app}(s),s)-Z^i(v(s),s) = \mathcal O(s^{\min(\frac 32, \frac{i+1}2)}).
\end{align*}

In particular, deriving a third order approximation $Z^3(v,s)$ or even higher-order approximations is useless unless one can also improve the approximation in the first step~\eqref{eq:app_step1}. This, however, would require a more detailed Taylor-expansion of $\hat \Phi^\dagger$ than the one given in Theorem~\ref{thm:TaylorExpansion} and thus more derivatives of~$F^q$.

\subsection{Linear Stability}\label{sec:stability}

Up to now, we have only determined the existence of equilibria of the PDE
\begin{align*}
	\frac{\partial}{\partial t} \Psi(t,x) = F(\Psi, p)(x).
\end{align*}
We have seen that apart from the trivial solution, there exists a solution curve of nontrivial solutions. In this section, we formally investigate the linear stability of $q$-twisted states and bifurcating branches. A rigorous proof of nonlinear stability is beyond the scope of this article.

Without loss of generality, we assume $\gamma_2>0$. If this is not the case, reverse the parameterization of $p(s)$ by considering $p(-s)$ instead. Moreover, since the stability depends on the spectrum of the linearization, we denote
\begin{align*}
	\kappa(s) := \sup_{\stackrel{k\in\N}{k\neq \ell}} c_1(q,k,p(s))
\end{align*}
and assume $\kappa(0) <0$ since otherwise neither the twisted state nor the bifurcating equilibria can be stable. Note that $\kappa(s)$ is continuous in~$s$ and thus $\kappa(s)<0$ for all~$s$ in a neighborhood of $0$. Consequently, we only have to investigate how the critical zero eigenvalues at the bifurcation change, when perturbing $(\Psi^q,p_0)$ to nearby equilibria.

\paragraph{Stability of the twisted state.}
Since $\gamma_2 = \frac{\d}{\d s}c_1(q,\ell,p(s))\Big|_{s=0}$ is assumed to be positive, $c_1(q,\ell,p(s))<0$ for all $s<0$. Consequently, $\sup_{k\in\N} c_1(q,k,p(s))<0$ for all $s<0$, which means that the spectrum of $F_v^q(0,p(s))$ is in the left half of the complex plane. Therefore, $\Psi^q$ is linearly stable. If, on the other hand $s>0$, $F_v^q(0,p(s))$ has positive eigenvalues, from which we can conclude linear instability.

\paragraph{Stability of the bifurcating branches in~$O$.}
First, we study the stability of the bifurcating equilibria only in the space of odd functions~$O$. Considering the bifurcation problem in this reduced space, there is only one critical eigenvalue with multiplicity one that passes through~$0$ and a one-dimensional curve of bifurcating equilibria. As explained at the end of Section~\ref{sec:h}, this curve corresponds to $\tau\mapsto (s(\tau), a(\tau))$ for $\tau \in (-\epsilon,\epsilon)$ with $s(0) = a(0) = 0$ and $\dot a(0)=1$. Further, we denote $v(\tau) = a(\tau) u_\ell + \psi^\dagger(a(\tau)u_\ell, p(s(\tau)))$ for the equilibrium of $F^{q,\dagger}$ such that $F^{q,\dagger}(v(\tau), p(s(\tau)))=0$. The principle of exchange of stability~\cite[Section I.7]{Kielhofer2012} can now be applied to study the linear stability of these bifurcating equilibria. First, the critical zero eigenvalue gets perturbed to an eigenvalue~$\nu(\tau)$ of $F^{q,\dagger}_v(v(\tau), p(s(\tau)))$, see~\cite[Proposition I.7.2]{Kielhofer2012}. To be precise,
\begin{align*}
	F^{q,\dagger}_v(v(\tau), p(s(\tau)))(u_\ell + \omega(\tau)) =  \nu(\tau) (u_\ell + \omega(\tau)),
\end{align*}
where $\omega(\tau)\in O$ is a continuously differentiable curve, $\nu(0)=0$ and $s\in (-\delta,\delta)$. Moreover, $\nu(\tau)$ is continuously differentiable and represents the perturbation of the zero eigenvalue. Its derivative at $\tau = 0$ can be computed using the formula
\begin{align*}
	\gamma_2\dot s(0) = -\dot{ \nu} (0),
\end{align*} 
see also formula $(I.7.41)$ in~\cite{Kielhofer2012}. However, due to $\dot s(0) = 0$ we obtain $\dot{ \nu}(0)=0$. The second derivative satisfies
\begin{align*}
	2\gamma_2\ddot s(0) = -\ddot{\nu}(0),
\end{align*}
see formula $(I.7.45)$ in~\cite{Kielhofer2012}. Using $\ddot s(0) = -2\gamma_1/\gamma_2$ we find $\ddot{ \nu}(0) = 4\gamma_1$. Since $\kappa(s)<0$, the stability of the bifurcating branch in a neighborhood of the bifurcation point is then determined by the sign of the perturbed eigenvalue~$\nu(\tau)$. To be precise, if $\gamma_1<0$ then $\ddot s(0) > 0$ and thus bifurcating solutions exist whenever $s>0$ is close to~$0$. Since $\gamma_2<0$, the $q$-twisted state has a positive eigenvalue and is thus linearly unstable in that parameter region. The leading eigenvalue of the bifurcating solution~$v(\tau)$, however is given by $\nu(\tau)<0$. Therefore, the bifurcating solutions are stable. In this case, the bifurcation is supercritical. If $\gamma_1>0$, we have $\ddot s(0)<0$. Consequently, the bifurcating solutions exist for $s<0$. Here, the $q$-twisted state is linearly stable and the leading eigenvalue of the bifurcating branch is $\nu(\tau)>0$. Thus, these bifurcating equilibria are unstable. Such a bifurcation is called subcritical.

\paragraph{Stability of the bifurcating branches in~$H^1_0$.} 
Now, we consider the bifurcation problem in~$H^1_0$. First note, that since $O\subset H_0^1$, the equilibrium $v(\tau)$ is still an equilibrium of~$F^q$ when considered in~$H_0^1$. Furthermore, by applying the symmetry condition~\eqref{eq:F_symmetry}, one can retrieve every other equilibria in a neighborhood of the bifurcation point. Specifically, for all $\phi\in \R$, the functions $B_\phi(v(\tau))$ are also equilibria. This symmetry results in a two-dimensional surface of equilibria, that is parameterized by~$\phi$ and~$\tau$. Corresponding to this surface of equilibria, there are two critical zero spectral values, that we need to track when perturbing the trivial equilibria $(0,p_0)$ to bifurcating equilibria $(v(\tau), p(s(\tau)))$ that lies on the surface. Obviously, since $O\subset H_0^1$, $(v(\tau), p(s(\tau)))$ inherits the eigenvalue $\nu(\tau)$. Because $(v(\tau), p(s(\tau)))$ lies on a surface of equilibria, the other spectral value is given by~$0$. Even though this zero spectral value prevents us from directly concluding linear stability, our numerical simulations in the next section show that bifurcating equilibria are stable in~$H_0^1$ when they are stable in~$O$.

\section{Applications}\label{sec:applications}

In this section, we take a few specific choices of the curve~$p(s)$ and evaluate the bifurcation in more detail. We compute the ratio $\gamma_2/\gamma_1$ which determines if bifurcating solutions exist for $s\ge 0$ or $s\le 0$. Moreover, we approximate these bifurcating solutions using the expansions in Section~\ref{sec:higher_expansion} and study their existence numerically as a cross-validation. In the first part of the section, we only look at graph coupling. The second subsection additionally includes one higher-order interaction and shows how that can influence the stability of twisted states. Finally, in the last part of this section, we consider all higher-order interactions and explain how they can be used to change the type of the bifurcation from subcritial to supercritical or vice versa.

\subsection{The Kuramoto Model on Nonlocal Graphs}\label{sec:application_graph}

\subsubsection{The Attractive Kuramoto Model (Subcritical Bifurcation)}

In this section we apply the bifurcation theory to the Kuramoto model on limits of $k$-nearest-neighbor graphs. Specifically, we consider no higher-order interactions, i.e., $\lambda = \mu = 0$ in~\eqref{eq:pd_problem}. Instead we only consider the coupling range~$r$ in the continuum limit as a parameter. When~$r>0$ is very small, the eigenvalues around a $q$-twisted state are all negative~\cite{Wiley2006}. Then, upon increasing~$r$, the eigenvalue corresponding to~$k=1$ is the first one that passes through~$0$. We denote this threshold by $r_0^\mathrm{a}(q)$ with a superscript~$\mathrm{a}$ to indicate that we are working with the attractive Kuramoto model. It is called attractive, since two oscillators that are close attract each other. In our notation that means $c_1(q,k,(r,0,0)) < 0$ for all $k\in \N$ and $r\in (0,r_0^\mathrm{a}(q))$ and $c_1(q,1,(r_0^\mathrm{a}(q),0,0))=0$, see Figure~\ref{fig:classical_eigvals} and~\cite{Wiley2006}.

\begin{figure}[h]
\centering
\begin{overpic}[grid=off, width = \textwidth]{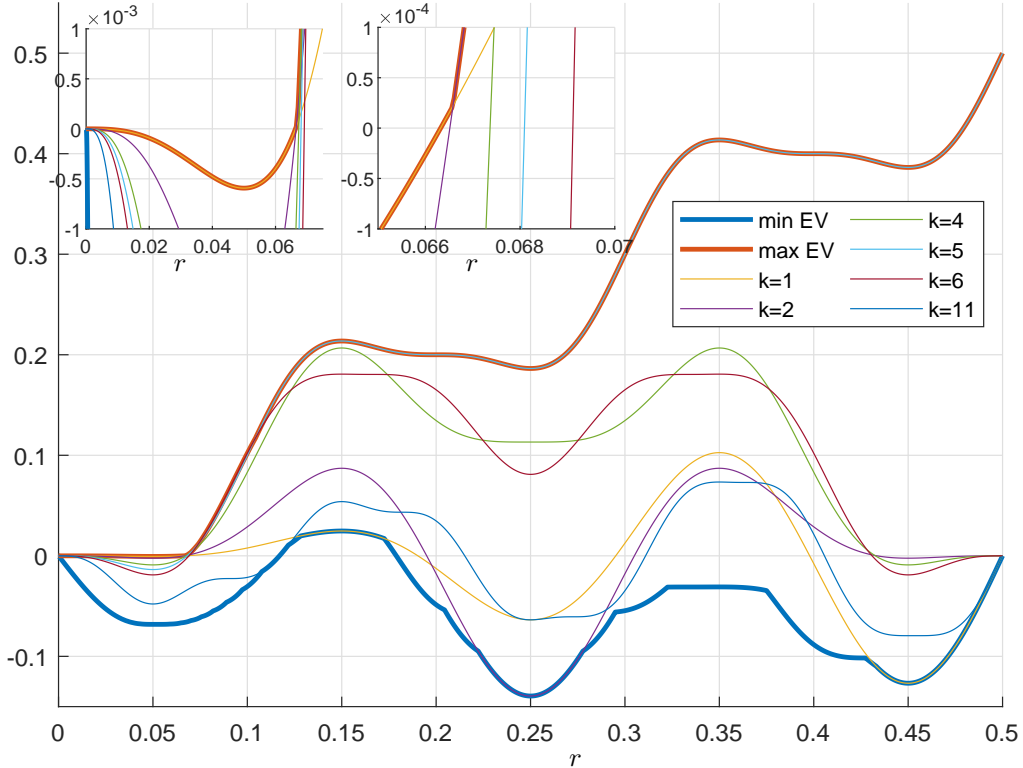}
\put(55,3){$r$}
\put(22.5,44){$r$}
\put(46.5,44){$r$}
\end{overpic}
\caption{Eigenvalues $c_1(5,k,(r,0,0))$ of a $5$-twisted state. Until $r_0^\mathrm{a}(5)\approx 0.06632$ all eigenvalues are negative. At $r_0^\mathrm{a}(5)$, the eigenvalue $c_1(5,k,(r,0,0))$ with $k=1$ passes through $0$. Shortly thereafter, the eigenvalue corresponding to $k=2$ passes through $0$ as well. For $0.1170 \lessapprox r \lessapprox 0.1789$ all eigenvalues are positive.}
\label{fig:classical_eigvals}
\end{figure}

To analyze this bifurcation we choose a curve $p\colon (-\delta,\delta)\to\mathcal P$ with $p(s) = (r_0^\mathrm{a}(q) + s, 0, 0)$. As explained in Section~\ref{sec:h}, finding equilibria of the Kuramoto model on a graph around the $q$-twisted state in a neighborhood of the bifurcation at $r_0^\mathrm{a}(q)$ is equivalent to finding solutions to the equation $\hat \Phi^\dagger(a, s) = 0$ in a neighborhood of the origin. According to the results in the same section, for given $s_0\in(-\delta,\delta)$, an approximate solution is given by $a^\mathrm{app}$ and it exists whenever the quantity under the root in~\eqref{eq:a_app} is positive. As seen in Figure~\ref{fig:classical_ratio}(a) and shown in Appendix~\ref{sec:gamma_ratio}, $\gamma_2/\gamma_1>0$. Moreover, since the first eigenvalue passes through~$0$ from below, we have $\gamma_2>0$, which then implies $\gamma_1>0$.
\begin{figure}[h]
\centering
\begin{overpic}[grid=off, width = .9\textwidth]{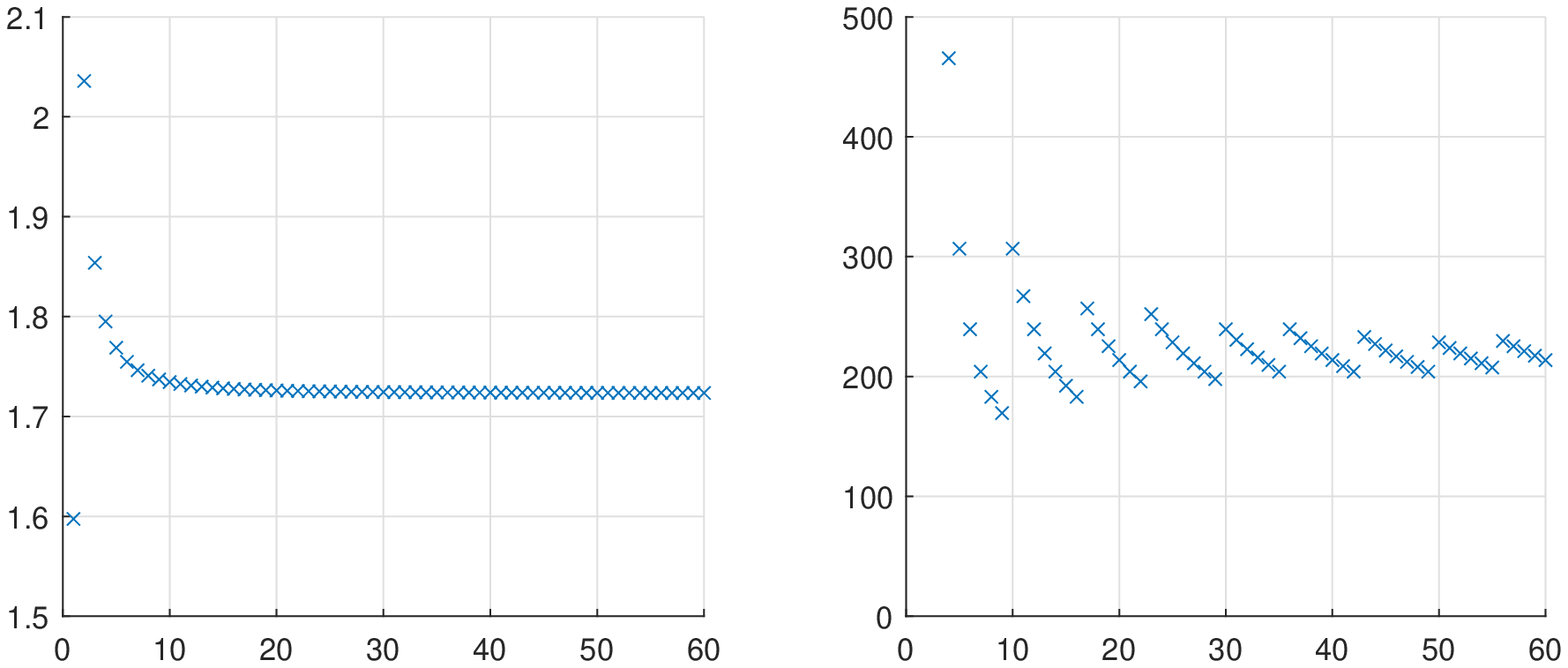}
\put(7,1){\textbf{(a)}}
\put(52,1){\textbf{(b)}}
\put(32,1){$q$}
\put(76,1){$q$}
\put(7,32){$\frac{\gamma_2}{\gamma_1q}$}
\put(50,32){$\frac{\gamma_2}{\gamma_1q}$}
\end{overpic}
\caption{Bifurcation ratio $\frac{\gamma_2}{\gamma_1q}$. \textbf{(a)} shows the ratio for the attractive Kuramoto model at $r = r_0^\mathrm{a}(q)$ when the first ($k=1$, independent of $q$) eigenvalue passes through~$0$. The value of $\frac{\gamma_2}{\gamma_1q}$ seems to converge to $\approx 1.723$. \textbf{(b)} shows the same ratio for the repulsive Kuramoto model at $r = r_0^\mathrm{r}(q)$, which is the bifurcation when the $q$-twisted first becomes stable upon increasing~$r$. Note that in the repulsive Kuramoto model the critical eigenvalue is not the same for all~$q$. In fact, $\operatorname{argmin}_k c_1(q,k,(r_0^\mathrm{r}(q),0,0))$ depends on~$q$ and jumps in \textbf{(b)} correspond to irregular changes of $\operatorname{argmin}$. For $q=1$, there is no bifurcation in the repulsive Kuramoto model and the data points that are approximately at $(2,6248)$ and $(3, 1045)$ are omitted in this plot.}
\label{fig:classical_ratio}
\end{figure}
Therefore, for $r\in(r_0^\mathrm{a}(q)-\delta,r_0^\mathrm{a}(q))$ there exist further equilibria of~\eqref{eq:problem} and~\eqref{eq:pd_problem} when $\lambda=\mu=0$ around the $q$-twisted states. However, according to the principles explained in Section~\ref{sec:stability}, the $q$-twisted state is stable in that regime and the bifurcating solutions are unstable.

To confirm the existence of the bifurcating solutions, we consider the sequence of finite particle systems~\eqref{eq:classical_finite} whose continuum limit is given by~\eqref{eq:continuum_graph}, or equivalently~\eqref{eq:problem} with $\lambda=\mu=0$. In these systems, the coefficients~$a_k$ are defined by $a_k = 1$ if $\min(\abs{k}, M-\abs{k})\le \lfloor M r\rfloor$ and $a_k=0$ otherwise. The corresponding system of phase differences, defined by $\theta_k := \phi_k - \phi_1$, is given by
\begin{align}\label{eq:classical_finite_disc_pd}
	\dot\theta_k = \frac 1M \sum_{j=1}^M a_{k-j} \sin(\theta_j-\theta_k)-\frac 1M \sum_{j=1}^M a_{1-j} \sin(\theta_j).
\end{align}
However, since the definition of~$a_k$ involves rounding,~$r$ cannot be regarded as a continuous bifurcation parameter. Therefore, we consider the system 
\begin{align}\label{eq:classical_finite_c_pd}
	\dot\theta_k = \frac 1M \sum_{j=1}^M b_{k-j} \sin(\theta_j-\theta_k)-\frac 1M \sum_{j=1}^M b_{1-j} \sin(\theta_j),
\end{align}
in which the coefficients $b_k$ are defined as follows: Let $k_0 = \lfloor rM\rfloor$. Then,
\begin{align*}
	b_k:= \begin{cases}
	1&\quad \text{ if } \min(\abs{k}, M-\abs{k}) \le k_0\\
	rM-k_0&\quad \text{ if }\min(\abs{k}, M-\abs{k}) = k_0+1\\
	0	&\quad \text{ otherwise }
	\end{cases}
\end{align*}
Here,~$r$ can be considered as a continuous bifurcation parameter.

However, when simulating the finite particle system~\eqref{eq:classical_finite_c_pd}, it turns out that the  bifurcation does not occur at $r_0^\mathrm{a}(q)$ but at another value $r_0^{\mathrm{a},M}(q)$ which is slightly different from $r_0^\mathrm{a}(q)$. In fact, numerical simulations show $r_0^\mathrm{a}(q) = r_0^{\mathrm{a},M}(q) + \mathcal O(1/M)$. For example, for $q=5$ we get $r_0^\mathrm{a}(5) \approx 0.06632$ whereas $r_0^{\mathrm{a}, 1000}(5) \approx 0.06582$. Consequently, when looking for bifurcating solutions of $q$-twisted in the finite particle system~\eqref{eq:classical_finite_c_pd} one should search in a neighborhood of $r_0^{\mathrm{a},M}(q)$. In particular, we fix $s = s_0$ and look for bifurcating solutions for $r = r_0^{\mathrm{a},M}(q)+s_0$. In order to get an approximation for a solution of $\Phi(v,p(s))=0$, we calculate $\gamma_1$ and $\gamma_2$ according to~\eqref{eq:gamma1def} and~\eqref{eq:gamma2def} based on the value $r=r_0^\mathrm{a}(q)+s_0$. Then, we calculate $a^\mathrm{app}(s_0)$ according to~\eqref{eq:a_app} and proceed by along the steps explained in Section~\ref{sec:higher_expansion} to get $v^\mathrm{app}$. We next use a discrete analog of the first order approximation $Z^1(v^\mathrm{app},s_0)$ as the initial condition of a zero finding algorithm (e.g. a Newton-iteration), that we then apply to the right-hand side of~\eqref{eq:classical_finite_c_pd}. For one specific parameter choice, the solution~$\hat Z$ of this zero-finding algorithm is depicted in Figure~\ref{fig:classical_nontrivialsolution}.

\begin{figure}[h]
\centering
\begin{overpic}[grid=off, width = 1\textwidth]{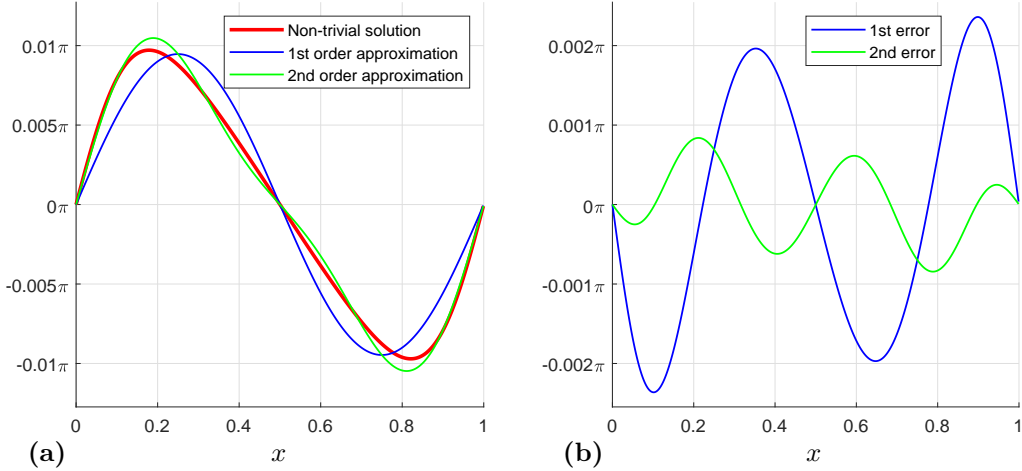}
\put(9,0){\textbf{(a)}}
\put(53,0){\textbf{(b)}}
\put(29,0){$x$}
\put(73,0){$x$}
\end{overpic}
\caption{Bifurcating solution around a $q$-twisted state in the system~\eqref{eq:classical_finite_c_pd}. \textbf{(a)} shows $\hat Z-\Psi^q$ (red line), its first order approximation $Z^1(v^\mathrm{app},s_0)-\Psi^q$ (blue) and its second order approximation $Z^2(v^\mathrm{app},s_0)-\Psi^q$ (green). Here $v^\mathrm{app} = a^\mathrm{app} u_1$ such that $\Phi(v^\mathrm{app},p(s_0))\approx 0$ up to higher-order terms. It this was exact, as explained in Section~\ref{sec:higher_expansion}, we would have $F(Z(v,s_0),p(s_0))=0$. We denote~$\hat Z$ for an equilibrium of the system~\eqref{eq:classical_finite_c_pd} for $r = r_0^{\mathrm{a},M}(q) +s_0$. Moreover, \textbf{(b)}  depicts the first order error $\hat Z-Z^1(v^\mathrm{app},s_0)$ (blue line) and the second order error $\hat Z-Z^2(v^\mathrm{app},s_0)$ (green line). Parameter values: $M=1000$, $s_0 = -10^{-4}$, $q = 5$. That results in $\gamma_1 \approx 9.494 \cdot 10^{-3}, \gamma_2 \approx 8.400 \cdot 10^{-2}, a^\mathrm{app} = 2.974 \cdot 10^{-2}$.}
\label{fig:classical_nontrivialsolution}
\end{figure}
While the simulations depicted in Figure~\ref{fig:classical_nontrivialsolution} are based on the system~\eqref{eq:classical_finite_c_pd}, in which~$r$ is a continuous bifurcation parameter, similar results hold for the system~\eqref{eq:classical_finite_disc_pd}. Here, however, we could not choose~$M$ arbitrarily. Instead, we were particularly successful finding bifurcating solutions when $r_0^{\mathrm{a},M}(q)$ is close to an integer multiple of $1/M$.

\subsubsection{The Repulsive Kuramoto Model (Supercritical Bifurcation)}

Now we consider a variant of~\eqref{eq:pd_problem} with $\lambda = \mu = 0$, in which we reverse the sign of the right-hand side. In particular, we look at
\begin{align}\label{eq:repulsive_continuum_pd}
	\frac{\partial}{\partial t}\Psi(t,x) &= -\int_\S W_r(x-y)\sin(\Psi(t,y)-\Psi(t,x))\ \d y + \int_\S W_r(y)\sin(\Psi(y))\ \d y.
\end{align}
Its finite-dimensional analog of~\eqref{eq:classical_finite_c_pd} is then given by
\begin{align}\label{eq:repulsive_finite_c_pd}
	\dot\theta_k = \frac{-1}M \sum_{j=1}^M b_{k-j} \sin(\theta_j-\theta_k)+\frac{1}M \sum_{j=1}^M b_{1-j} \sin(\theta_j).
\end{align}
Here, two oscillators that are close to each other, repel each other. Therefore, we call this model the \emph{repulsive} Kuramoto model, see also~\cite{Girnyk2012}.

Because the systems~\eqref{eq:repulsive_continuum_pd} and~\eqref{eq:pd_problem} with $\lambda=\mu=0$ are only different by a factor of $-1$ on the right-hand side, they share the same equilibria. Yet, they are not identical, since the stability of these equilibria depends on the eigenvalues of the linearization of the right-hand side and they are nonidentical. In fact, the spectrum of the linearization of the right-hand side of~\eqref{eq:repulsive_continuum_pd} can be obtained from multiplying the spectrum of~\eqref{eq:pd_problem} by~$-1$. Similarly, the eigenvalues of the system~\eqref{eq:repulsive_finite_c_pd} can be obtained by multiplying the eigenvalues from~\eqref{eq:classical_finite_c_pd} by~$-1$. Therefore, the eigenvalues of the linearization of the right hand around a $q$-twisted state are $-c_1(q,k,(r,0,0))$. Consequently, the $q$-twisted state is linearly stable if $c_1(q,k,(r,0,0))>0$ for all $k\in\N$.

As seen in Figure~\ref{fig:classical_eigvals}, for $q=5$ there is an interval $r\in (0.117, 0.1789)$ in which all these conditions are satisfied. Moreover, in a neighborhood of the lower boundary of this interval, $\max_k -c_1(5,k,(r,0,0)) = -\min_k c_1(5,k,(r,0,0))$ is attained for $k=11$. It was shown in~\cite{Girnyk2012} that for all $q\in\N$ there is an interval in which a $q$-twisted state is linearly stable in the repulsive Kuramoto model. In particular, the authors showed that stability holds if $1.1787\lessapprox 2qr \lessapprox 1.7829$, which agrees with our observation in Figure~\ref{fig:classical_eigvals}. For $q=1$ there is no bifurcation in the repulsive Kuramoto model.

To analyze the bifurcation, we denote $r_0^\mathrm{r}(q)$ for the smallest value of~$r$ until which there is a positive eigenvalue of the $q$-twisted state in the repulsive Kuramoto model. For example $r_0^\mathrm{r}(5) \approx 0.11787$. We then choose the parameter curve $p(s) = (r_0^\mathrm{r}(q) + s,0,0)$. A numerical evaluation of~\eqref{eq:gamma1def} and~\eqref{eq:gamma2def} for $q = 5$ and $k = 11$ shows $\gamma_1, \gamma_2 > 0$, see also Figure~\ref{fig:classical_ratio}(b). Therefore, bifurcating solutions exist when $s<0$, or equivalently $r<r_0^\mathrm{r}(5)$. Since the $5$-twisted state is unstable in that regime, these bifurcating solutions are linearly stable.

To validate that numerically, we first choose~$M$ large enough and then determine $r_0^{\mathrm{r},M}(5)$, which is the bifurcation point in the~$M$-particle system with $\lim_{M\to \infty} r_0^{\mathrm{r},M}(5)\to r_0^\mathrm{r}(5)$. Next, we choose $s < 0$ with small enough~$\abs{s}$. Finally, we simulate the system~\eqref{eq:classical_finite_c_pd} for random initial conditions that are close to the $5$-twisted state until the system reaches an equilibrium. We observe, see Figure~\ref{fig:classical_repulsive_odesol}, that all resulting equilibria lie in a neighborhood of the $5$-twisted state. The differences between these equilibria and the $5$-twisted state are sinusoidal functions with~$11$~periods on the domain~$\S$, which corresponds to the unstable direction spanned by~$u_{11}$ and~$w_{11}$ of the $5$-twisted state. Moreover, the amplitudes of these differences are all the same and they can be approximated by solving~\eqref{eq:a_app}.

\begin{figure}
\centering
\begin{overpic}[grid=off, width = 1\textwidth]{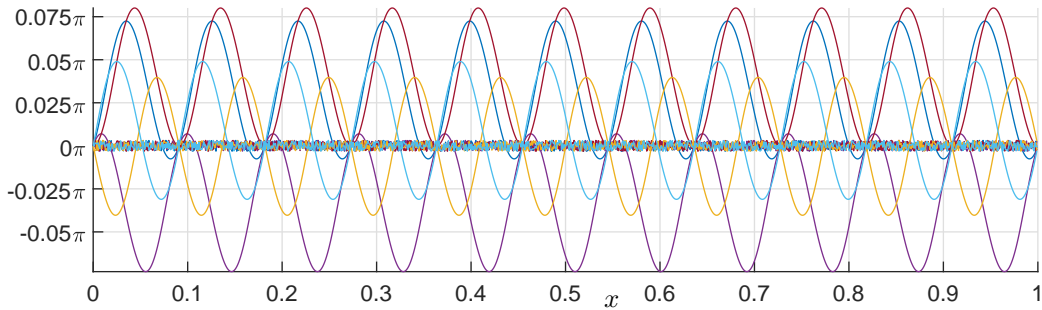}
\put(55,0){$x$}
\end{overpic}
\caption{Simulation of~\eqref{eq:classical_finite_c_pd} with initial conditions that are close to the $5$-twisted state until trajectories have reached an equilibrium. The wiggly lined that are centered around ~$0$ are random functions with amplitude~$10^{-2}$ in whose direction we have perturbed the of the $5$-twisted state to obtain the initial condition for the ODE solver. The resulting limiting equilibria of the ODE system are depicted as modulations of the $5$-twisted state. These limiting equilibria can be obtained by adding the sinusoidal functions to the $5$-twisted state. All sinusoidal functions have an amplitude of $\approx 0.12575  \approx 0.04 \pi$ which is close to the prediction $a^\mathrm{app} = 0.0394 \pi$ by~\eqref{eq:a_app}. Furthermore, these modulations can be obtained from each other by applying the operator $B_\phi$. Parameter values: $M = 1000, s = -10^{-5}, r_0^{\mathrm{r},M}(5) \approx 0.11654, r_0^\mathrm{r}(5) = 0.11704, r = r_0^{\mathrm{r},M}(q) + s$, $\gamma_1 \approx 1.38*10^{-3}, \gamma_2 \approx 2.12$.} 
\label{fig:classical_repulsive_odesol}
\end{figure}

\subsection{Stabilization via Higher-Order Interactions}\label{sec:stabilization}

In this section, we keep $r_0\in (0,\frac 12)$ and $\mu_0 = 0$ constant and vary $\lambda$. When considering the eigenvalues $c_1(q,k,(r_0,\lambda,0))$ we note that $\frac{\d}{\d \lambda}c_1(q,k,(r_0,\lambda,0))$ is constant with respect to $\lambda$ and does not depend on~$k$. We denote this quantity by $h(q,r_0)$. Note that it is given by
\begin{align*}
	h(q,r_0) = -\hat W_{r_0}(q) = -\frac{2}{\pi q}\sin(2\pi q r_0).
\end{align*}
Therefore, whenever this quantity is nonzero, one can use higher-order interactions to stabilize or destabilize $q$-twisted states on the continuum limit of $k$-nearest-neighbor graphs. To explain this, suppose for example, that $m:= \max_k c_1(q,k,(r_0,0,0)) > 0$. In this case the $q$-twisted state is unstable in the model~\eqref{eq:pd_problem} with $\lambda=\mu=0$. Due to the linearity of $c_1(q,k,(r_0,\lambda,0))$ with respect to~$\lambda$, we can then write
\begin{align*}
	c_1(q,k,(r_0,\lambda,0)) = c_1(q,k,(r_0,0,0)) + \lambda h(q,r_0).
\end{align*}
Then, the maximal eigenvalue of the linearization around a $q$-twisted state for parameters $r_0,\lambda$ is given by $m+\lambda h(q,r_0)$. Consequently, if a $q$-twisted state is unstable for $\lambda = 0$, i.e., $m>0$, one can stabilize it by choosing $\lambda < \frac{-m}{h(q,r_0)}$ if $h(q,r_0)>0$ and $\lambda > \frac{-m}{h(q,r_0)}$ if $h(q,r_0)<0$. In the nongeneric case $h(q,r_0)=0$, a (de)stabilization is not possible.

\begin{figure}[h]
\centering
\begin{overpic}[grid=off, width = 1\textwidth]{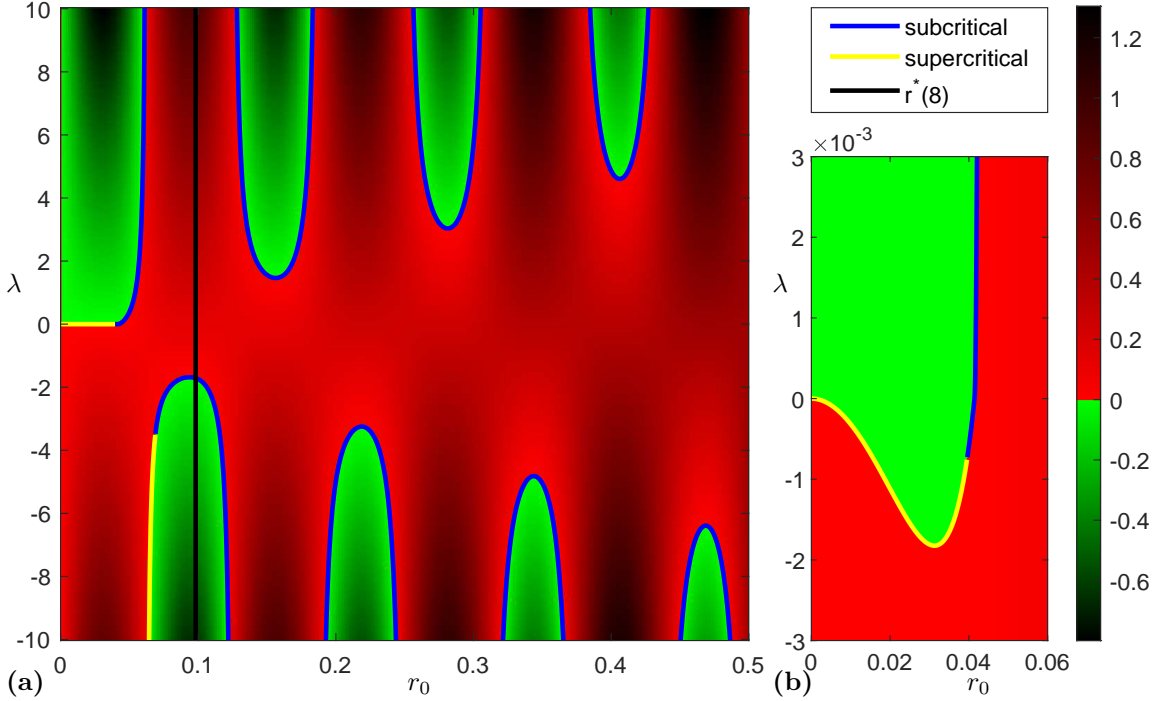}
\put(5,-1){\textbf{(a)}}
\put(68,-1){\textbf{(b)}}
\put(38,-1){$r_0$}
\put(84,-1){$r_0$}
\put(5,32){$\lambda$}
\put(68,32){$\lambda$}
\end{overpic}
\caption{Maximal eigenvalue of a $8$-twisted state for in the system~\eqref{eq:pd_problem} in dependence of~$r_0$ and~$\lambda$. Red colors represent an unstable $8$-twisted state, whereas the $8$-twisted state is stable if the color is green. The bigger the magnitude of the maximal eigenvalue, the darker the color. The black line depicts the value of $r^\star(8)$. For all $r_0>r^\star(8)$ the bifurcation boundary can be calculated from~\eqref{eq:higher_order_lambda0}. The blue curve indicates a subcritical bifurcation whereas supercritical bifurcations are yellow. \textbf{(b)} is a more detailed view of one region in \textbf{(a)}. Note also that the bifurcation at $\lambda=0$ is subcritical, as shown in Section~\ref{sec:application_graph}} 
\label{fig:higher_order_eigvals}
\end{figure}

It can be shown (see Theorem~\ref{thm:grapheigenvalues}) that for all~$q$ and large enough~$r$ the largest eigenvalue $m = \max_k c_1(q,k,(r,0,0))$ is attained for $k=q$. In particular, a sufficient condition that the largest eigenvalue~$m$ is attained for $k=q$ is
\begin{align}\label{eq:largest_eigval_qk_condition}
	\frac{2}{\pi q} \le 2r - \frac 1\pi \sin(2\pi r),
\end{align}
which is proven in Appendix~\ref{sec:sup}. Since the right-hand side of~\eqref{eq:largest_eigval_qk_condition} is monotonically increasing in~$r$, there is a threshold $\tilde r(q)$ such that~\eqref{eq:largest_eigval_qk_condition} holds for all $r>\tilde r(q)$. Moreover, due to the continuity of the right-hand side and the convergence of the left-hand side to~$0$ as $q\to \infty$, this threshold converges to~$0$ as $q\to \infty$. However, since~\eqref{eq:largest_eigval_qk_condition} is only a sufficient condition, the largest eigenvalue might already be attained by $k=q$ for $r<\tilde r(q)$. We denote $r^\star(q)$ for the smallest value of $r\in (0,\frac 12)$ such that for all $r\ge r^\star(q)$, the largest eigenvalue is attained for $q=k$. We then have the inequality $0\le r^\star(q)\le \tilde r(q)\le \frac 12$. 
Consequently, when $r_0\in(r^\star(q),\frac 12]$ and $\hat W_{r_0}(q)\neq 0$ there is a bifurcation when $\lambda = \lambda_0$, with
\begin{align}\label{eq:higher_order_lambda0}
	\lambda_0 = -\frac{m}{h(q,r_0)} = \frac{\hat W_{r_0}(2q) + \hat W_{r_0}(0) - 2\hat W_{r_0}(q)}{4\hat W_{r_0}(q)}.
\end{align}

\begin{remark}
	Note that we need $\lim_{k\to\infty} c_1(q,k,(r_0,\lambda_0,0)) = -(\frac 12 + \lambda_0)\hat W_{r_0}(q) \neq 0$ in order to ensure that an isolated eigenvalue is passing through~$0$ at the bifurcation point. When $r_0>r^\star(q)$ this is equivalent to $\lambda_0\neq-\frac 12$, since we assumed $\hat W_{r_0}(q)\neq 0$. However, since $\hat W_{r_0}(2q)+\hat W_{r_0}(0)>0$, it guaranteed by~\eqref{eq:higher_order_lambda0} that $\lambda_0\neq -\frac 12$.
\end{remark}

By analyzing the ratio of $\gamma_1/\gamma_2$ we can determine the type of the bifurcation, see Figure~\ref{fig:higher_order_eigvals}.

\subsection{Changing the Type of the Bifurcation}\label{sec:bif_change}

Here, we fix~$r$ and consider~$4\lambda + 2\mu$ as the bifurcation parameter. Then, we vary~$\lambda$ and see how this variation affects the type of the bifurcation. Assuming~\eqref{eq:largest_eigval_qk_condition}, the largest eigenvalue is attained for $k=q$ and the bifurcation takes place at
\begin{align*}
4\lambda + 2\mu = \frac{\hat W_r(0) + \hat W_r(2q)-2\hat W_r(q)}{\hat W_r(q)} =: H(q,r).
\end{align*}
It is easy to see that~\eqref{eq:largest_eigval_qk_condition} is satisfied if $2 \le 2\pi qr-\sin(2\pi q r)$, which is equivalent to $qr~\ge~\upsilon_0~\approx~0.4065$, where $\upsilon_0$ solves $2 = 2\pi \upsilon_0 - \sin(2\pi \upsilon_0)$.
We consider the curve
\begin{align*}
	p^\mathfrak{t}(s) = (r^\mathfrak{t}(s), \lambda^\mathfrak{t}(s), \mu^\mathfrak{t}(s))^\top = (r_0, 4s-2\mathfrak{t}+H(q,r_0)/4, 2s+4\mathfrak{t})^\top,
\end{align*}
which is parameterized by~$s$ and $\mathfrak{t}\in \R$ is an additional parameter. Note that $4\lambda^\mathfrak{t}(0) + 2\mu^\mathfrak{t}(0) = H(q,r)$ for all $\mathfrak{t}\in\R$. Therefore, there is a bifurcation at $s=0$ for all $\mathfrak{t}\in \R$.
We calculate
\begin{align*}
	\gamma_2^\mathfrak{t} = \frac{\d}{\d s}c_1(q,q,p)\Big|_{s=0} = -5\hat W_r(q),
\end{align*}
which is independent of~$\mathfrak{t}$. Moreover,
\begin{align*}
	c_5(q,q,p^\mathfrak{t}(0)) &= c_5(q,q,p^0(0))-\frac 12 \mathfrak{t} \hat W_r(q)\\
	c_2(q,q,p^\mathfrak{t}(0)) &= c_2(q,q,p^0(0))-\frac 12 \mathfrak{t} (-\hat W_r(0)+\hat W_r(2q))\\
	c_3(q,2q,q,p^\mathfrak{t}(0)) &= c_3(q,2q,q,p^0(0))\\
	c_1(q,2q,p^\mathfrak{t}(0)) &= c_1(q,2q,p^0(0))
\end{align*}
which leads to
\begin{align*}
	\gamma_1^\mathfrak{t} &= \frac{1}{2} \left(c_5(q,q,p^\mathfrak{t}(0))-\frac{c_2(q,q,p^\mathfrak{t}(0))c_3(q,2q,q,p^\mathfrak{t}(0))}{c_1(q,2q,p^\mathfrak{t}(0))}\right)\\
	&=\gamma_1^0 - \frac{1}{4}\mathfrak{t}  \hat W_r(q) + \frac{c_3(q,2q,q,p^0(0))}{4c_1(q,2q,p^0(0))}\mathfrak{t}(-\hat W_r(0)+\hat W_r(2q))\\
	&= \gamma_1^0 + \mathfrak{t} X(q,r)
\end{align*}
with
\begin{align*}
X(q,r) = -\frac 14 \hat W_r(q) + \frac{c_3(q,2q,q,p^0(0))}{4c_1(q,2q,p^0(0))}(-\hat W_r(0)+\hat W_r(2q)).
\end{align*}

It can be shown (see Appendix~\ref{sec:abbreviations}) that
\begin{align}\label{eq:X_j_relation}
	X(q,r) = \frac 1q \iota(qr) 
\end{align}
for a function $\iota\colon \R_{\ge 0}\to \R$, see Figure~\ref{fig:jfun}. Moreover, based on the explicit expression of~$\iota$ that is given in the appendix, we conclude $\iota(\upsilon) = \upsilon + \mathcal O(1)$ as $\upsilon\to \infty$.

\begin{figure}[h]
\centering
\begin{overpic}[grid=off, width = 0.5\textwidth]{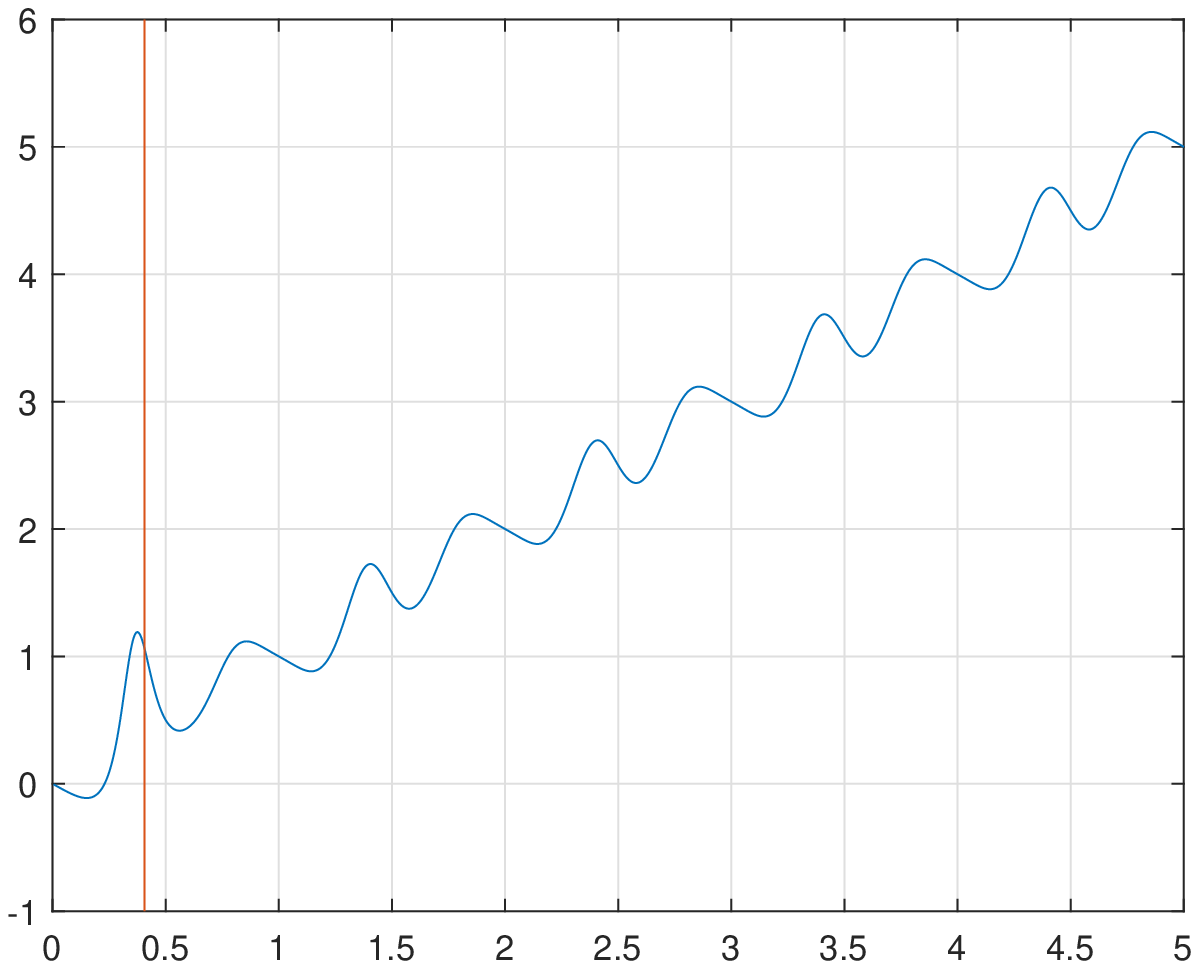}
\put(55,2){$\upsilon$}
\put(18,2){$\upsilon_0$}
\put(0,45){$\iota(\upsilon)$}
\end{overpic}
\caption{The function $\iota(\upsilon)$.} 
\label{fig:jfun}
\end{figure}

To conclude, $\gamma_2^\mathfrak{t}$ is independent of~$\mathfrak{t}$ and whenever $\gamma_2^\mathfrak{t}\neq 0$ there is a bifurcation at $s=0$. Furthermore, if additionally $X(q,r)\neq 0$, $\gamma_1^\mathfrak{t}$ can take any value in $\R$ by suitably choosing $\mathfrak{t}$. Consequently, the ratio $\gamma_2^\mathfrak{t}/\gamma_1^\mathfrak{t}$ can also take arbitrary values in $\R\setminus \{ 0\}$. Since the sign of that ratio determines the type (sub- or supercritical) of the bifurcation, the parameter $\mathfrak{t}$ can be used to influence the type of the bifurcation. As can be rigorously shown, $\iota(\upsilon)>0$ for all $\upsilon \ge \upsilon_0$, see Figure~\ref{fig:jfun}. Therefore, a sufficient condition to have $X(q,r)>0$ is given by $qr \ge \upsilon_0$.

The possibility of changing the bifurcation type by adjusting the strengths of various higher-order interactions to the continuum limit of $k$-nearest-neighbor graphs extends previous results~\cite{Skardal2019, Kuehn2021a}. In particular,~\cite{Skardal2019} contains a global bifurcation analysis for a coupling composed of a pairwise and two higher-order interaction terms. The authors found that by suitably choosing the strengths of the higher-order interaction one can influence the type of the pitchfork bifurcation, in which a certain state changes its stability.
Moreover, in~\cite{Kuehn2021a} it is shown in the context of bifurcations from trivial branches in network dynamics, that modifications of the original network model can generically induce changes between sub- and super-critical bifurcations. The main idea in~\cite{Kuehn2021a} is to first formally study normal forms for suitable macroscopic observables and then monitor the effect of network model changes in the concrete bifurcation coefficients in the normal form. Hence, the results presented here concretely and rigorously prove that higher-order interactions can trigger the effect between sub- and supercritical bifurcations, even for nontrivial branches of twisted states.

\section{Other Higher-Order Nonlocal Couplings}\label{sec:honns}

In this section, we discuss other possible generalizations of the pairwise interactions~\eqref{eq:continuum_graph} to higher-order interactions. Instead of focusing on the system of phase differences, we consider the original system that describes the absolute position~$\Theta(t,x)$ of the oscillators.
The name ``nearest-neighbor" coupling for the graph interactions~\eqref{eq:classical_finite} originates from supposing that the individual oscillators $i = 1,\dots,M$ are equidistantly placed on the unit circle in ascending order. In a nearest-neighbor graph, each oscillator is then connected to its predecessor and its successor on the circle. More generally, in a $k$-nearest-neighbor graph, each oscillator is connected to all of its $k$-predecessors and $k$-successors on the circle. If one fixes $r = k/M$ and sends $M\to \infty$, a nonlocal coupling in the continuum limit emerges. One can imagine two oscillators $x,y\in \S$ in the continuum limit to be coupled if $W_r(x-y)=1$. The parameter~$r$ specifies how far the two oscillators can be spaced apart such that they are still considered neighbors.

Now, let us consider the higher-order coupling
\begin{align}\label{eq:way3_coupling}
	\frac{\partial}{\partial t} \Theta(t,x) = 	\int_\S \int_\S W_r(z+y-2x) \sin(\Theta(t,z)+\Theta(t,y) - 2\Theta(t,x)) \ \d y \d z,
\end{align}
which is characterized by the coupling kernel $W_r(z+y-2x)$. While this is a straight-forward generalization of the pairwise continuum limit of $k$-nearest-neighbor coupling~\eqref{eq:continuum_graph} to higher-order interactions, the terminology ``nearest-neighbor" has to be used more carefully. Strictly speaking, three oscillators $z,y,x\in \S$ do not need to neighbor each other for them to be coupled, which is the case when $W_r(z+y-2x)=1$. For example, when $z = y = \frac 12$ and $x = 0$, we have $z+y-2x=0\in\S$. Therefore, these three oscillators are coupled for every $r>0$, even though~$z$ and~$x$ are relatively far apart. Similar arguments also hold for the $4$-way interaction in~\eqref{eq:problem}. One can further generalize~\eqref{eq:way3_coupling} by replacing the coupling function and the interaction function to 
\begin{align}\label{eq:way3_gen_coupling}
	\frac{\partial}{\partial t} \Theta(t,x) = 	\int_\S \int_\S W_r(m_1z + m_2 y + m_3 x)~\sin(n_1\Theta(t,z) + n_2\Theta(t,y) + n_3\Theta(t,x)) \ \d y \d z,
\end{align}
or even to the $d+1$-way coupling
\begin{align}\label{eq:way3_gen_plus_coupling}
	\frac{\partial}{\partial t} \Theta(t,x) = \int_{\S^d} W_r\left( \sum_{i=1}^d m_i y_i + m_{d+1} x\right) ~\sin\left( \sum_{i=1}^d n_i \Theta(t,y_i) + n_{d+1}\Theta(t,x)\right) \ \d y
\end{align}
for coefficients $m_i,n_i\in \Z\setminus\{0\}$. Note, however, that we must have $n_i = m_i$, $i=1,\dots,d+1$ in order for $q$-twisted states to be invariant. For special cases---as those considered above---the system is symmetric: If $\sum_{i=1}^{d+1}n_i=0$ then we have a phase shift symmetry~\eqref{eq:symmetry_phase_differences} and if $\sum_{i=1}^{d+1}m_i=0$ then we have a rotational symmetry of the ring~\eqref{eq:symmetry_rotation}

If one wants to derive higher-order continuum limits that overcome the issue of the nearest-neighbor terminology in higher-order networks, one can consider other generalizations of~\eqref{eq:continuum_graph}. For example, three oscillators $z,y,x\in \S$ in the $3$-way coupling
\begin{align}\label{eq:way4_coupling}
	\frac{\partial}{\partial t} \Theta(t,x) = 	\int_\S \int_\S W_r(z-x)W_r(y-x) \sin(\Theta(t,z)+\Theta(t,y)-2\Theta(t,x))\ \d y \d z
\end{align}
are coupled if $z$ is close to $x$ and additionally $y$ is close to $x$. As a result, all three oscillators $z,y$ and $x$ need to be close enough to each other for them to be coupled. Another possible higher-order generalization of~\eqref{eq:continuum_graph} is given by
\begin{align}\label{eq:way5_coupling}
	\frac{\partial}{\partial t} \Theta(t,x) = 	\int_\S \int_\S W_r(z-x)W_r(y-x)W_r(z-y) \sin(\Theta(t,z)+\Theta(t,y)-2\Theta(t,x))\ \d y \d z.
\end{align}
This coupling additionally introduces a symmetry between $x,y$ and~$z$. In fact, if $x,y,z$ are coupled, then any permutation of them is also coupled. Note that the prefactors $1,1,-2$ of $\Theta(t,z),\Theta(t,y)$ and $\Theta(t,x)$ in~\eqref{eq:way4_coupling} and~\eqref{eq:way5_coupling} can also be generalized to arbitrary coefficients $n_1,n_2,n_3\in \Z\setminus \{0\}$. However, they must add up to zero, i.e., $n_1+n_2+n_3=0$. 
Such ``diffusive'' coupling terms guarantees the invariance of $q$-twisted states and can correspond, for example, to a normal form symmetry in a phase reductions~\cite{Ashwin2016a}.

Of course, we can also study the stability of $q$-twisted states in the models~\eqref{eq:way3_gen_plus_coupling},~\eqref{eq:way4_coupling} and~\eqref{eq:way5_coupling}. Calculating the eigenvalues of the linearization of the right-hand sides of those systems around a $q$-twisted state yields the following:

The eigenvalues of the linearization of the right-hand side of~\eqref{eq:way3_gen_plus_coupling} with $n_i = m_i,~i=1,\dots,d+1$ are given by $\lambda_0 = 0$ with multiplicity~$1$ and $\lambda_k = \frac{1}{2}m_{d+1}\hat W_r(q)$ if $k\neq 0$. This eigenvalue has multiplicity $\infty$.
In the system~\eqref{eq:way4_coupling}, the eigenvalues are $\lambda_0 = 0$, again with multiplicity~$1$ and
\begin{align*}
    \lambda_k =  \frac 14 \hat W_r(q) \Big[ \hat W_r(q+k) + \hat W_r(q-k) -2 \hat W_r(q)\Big]
\end{align*}
if $k\neq 0$, each with multiplicity $2$. Finally, for the system~\eqref{eq:way5_coupling}, the eigenvalues around a $q$-twisted state are given by $\lambda_0 = 0$ (multiplicity~$1$) and 
\begin{align*}
    \lambda_k = \frac 18 \sum_{\ell\in \Z} \Big [ &\hat W_r(-k+\ell-q)\hat W_r(\ell+q) + \hat W_r(-k+\ell+q)\hat W_r(\ell-q)\\
    &+  \hat W_r(\ell - q) \hat W_r(k + q+\ell) + \hat W_r(\ell + q) \hat W_r(k-q+\ell)\\
    &-4 \hat W_r(\ell-q)\hat W_r(\ell+q)  \Big]
\end{align*}
if $k \neq 0$. Again, the multiplicity of these eigenvalues is~$2$. For all these systems, the eigenfunctions are given by $\sin(2\pi k x)$ and $\cos(2\pi k x)$. 

In Section~\ref{sec:stabilization} we showed that adding higher-order interactions of the form~\eqref{eq:way3_coupling} to the pairwise coupling~\eqref{eq:continuum_graph} can stabilize $q$-twisted states, when the strength of the higher-order interactions is adjusted suitably.  A numerical analysis shows that the systems~\eqref{eq:way3_gen_plus_coupling},~\eqref{eq:way4_coupling} and~\eqref{eq:way5_coupling} can also stabilize $q$-twisted states when added to the pairwise coupling~\eqref{eq:continuum_graph}.
However, we chose two different instances of higher-order interactions of the form~\eqref{eq:way3_gen_plus_coupling} for a couple of reasons. First, the formulas for $3$-way and $4$-way coupling are simple, since they only include the evaluation of~$W_r$ once. In contrast, the $3$-way coupling~\eqref{eq:way5_coupling} involves three evaluations of~$W_r$ and a generalization to $4$-way coupling would involve even more evaluations. Second, it is easier to compute eigenvalues of the linearization of the right-hand side of~\eqref{eq:way3_coupling} around a twisted state than it is to compute them for~\eqref{eq:way4_coupling} and~\eqref{eq:way5_coupling}. Third and most importantly, these eigenvalues of~\eqref{eq:way3_gen_plus_coupling} are independent of~$k$. Therefore, when adding a system~\eqref{eq:way3_gen_plus_coupling} to the pairwise system~\eqref{eq:continuum_graph}, the maximal eigenvalue is still attained for the same~$k$ when varying the strength of the higher-order coupling.

\section{Conclusion}\label{sec:conclusion}

In this article, we considered the continuum limit of a Kuramoto model on $k$-nearest-neighbor networks and extended it to include higher-order interactions. We analyzed the stability of $q$-twisted states and performed a rigorous Lyapunov--Schmidt reduction to find bifurcating equilibria. We saw that the bifurcation at which the twisted states lose their stability is a pitchfork bifurcation. Moreover, we determined leading coefficients in the Taylor expansion to classify the bifurcation as sub- or supercritical.
This considers and extends previous works from two perspectives. Firstly, we added a bifurcation analysis to previous works~\cite{Wiley2006, Girnyk2012}, which have analyzed stability of twisted state in the attractive and repulsive Kuramoto model on $k$-nearest-neighbor graphs. In particular, the problem of finding and classifying bifurcating solutions was left open in~\cite{Wiley2006}. Secondly, the authors of~\cite{Skardal2019} considered a higher-order all-to-all Kuramoto model whose right-hand consists of a pairwise part and two higher-order interaction parts, thus resembling our model~\eqref{eq:pd_problem}. In this model they analyzed the stability of the splay state and the bifurcation at which it looses its stability. While there is always a pitchfork bifurcation, they found that when varying the strengths of the higher-order couplings, as we did in Section~\ref{sec:bif_change}, one can influence if the bifurcation is sub- or supercritical. In that regard, we have extended their results to the continuum limit of $k$-nearest-neighbor networks and thereby shown that the phenomenon that one can change the type of a bifurcation with suitable higher-order interactions generically occurs in a wider class of higher-order networks.

Our work has also raised further follow up questions. For example, we believe that our techniques are also applicable to analyze bifurcations of generalized twisted states in similar models. For example, one can analyze twisted states in the Kuramoto model on other graphs, whose limit can still be characterized by a function $W_r(x)$, which does not necessarily need to be of the form~\eqref{eq:W_def}. Since our formulas~\eqref{eq:hatPhi_cartesian} are also valid in that case, our work poses a framework within which one can study the effect of the Fourier coefficients of~$W_r(x)$ on the bifurcation. Another example to which our theory could be applied is generalized twisted states on a two-dimensional lattice whose stability has been analyzed in~\cite{Goebel2021}.
Moreover, apart from the pairwise coupling we have only considered $3$-way and $4$-way higher-order interactions, however one can certainly add $5$-way coupling, $6$-way coupling, etc. and investigate how these interactions influence the bifurcation. Then, one might expect to control even higher order derivatives of~\eqref{eq:hatPhi_cartesian}. Regarding this question from a different perspective, one can also ask if every polynomial that respects the symmetry of the system can be obtained as a Taylor expansion of the function $\Phi^\dagger(a,s) = h(a,s)$ when adding enough higher-order interactions.

\pagebreak

\appendix

\section{Abbreviations}\label{sec:abbreviations}
let $p = (r,\lambda,\mu)\in \mathcal P$. Using the convention $\hat W_r(-k) := \hat W_r(k)$, we then define

\begin{align*}
	c_1(q,k,p) &= \frac{1}{4}\Big(\hat W_r(q-k)+ \hat W_r(q+k) - 2\hat W_r(q) - (4\lambda+2\mu) \hat W_r(q)\Big)\\
	c_2(q,k,p) &= \frac 18 \Big( -\hat W_r(q-2k) + 2\hat W_r(q-k) - 2\hat W_r(q+k) + \hat W_r(q+2k)-2\lambda\hat W_r(q-k) + 2\lambda\hat W_r(q+k)\Big)\\
	c_3(q,m,k,p)&=\frac{1}{8}\Big(-\hat W_r(q-m) + \hat W_r(q-m+k)+\hat W_r(q-k)\\
	&\qquad- \hat W_r(q+k) - \hat W_r(q+m-k) + \hat W_r(q+m)\Big)\\
	c_4(q,m,k,p)&=\frac{1}{8}\Big(-\hat W_r(q-m-k) + \hat W_r(q-m) + \hat W_r(q-k)\\
	&\qquad - \hat W_r(q+k) - \hat W_r(q+m) +\hat W_r(q+m+k)\Big)\\
	c_5(q,k,p)&=\frac{1}{16}\Big( \hat W_r(q-2k) - 4 \hat W_r(q-k) + 6 \hat W_r(q) - 4 \hat W_r(q+k) + \hat W_r(q+2k)\\
	&\qquad + 4\lambda \hat W_r(q-k) + 32\lambda\hat W_r(q) + 4\lambda\hat W_r(q+k)\\
	&\qquad + 2\mu \hat W_r(q-k) + 14\mu\hat W_r(q) + 2\mu\hat W_r(q+k) \Big)\\
	c_6(q,k,p)&=\frac{1}{16}\Big( \hat W_r(q-3k) - 3\hat W_r(q-2k) + 3\hat W_r(q-k) - 2\hat W_r(q) + 3\hat W_r(q+k)\\
	&\qquad  - 3\hat W_r(q+2k) + \hat W_r(q+3k) -12 \lambda \hat W_r(q-k) - 16 \lambda \hat W_r(q)-12\lambda \hat W_r(q+k)\\
	&\qquad -2\mu \hat W_r(q)   \Big)
\end{align*}

\noindent
$c_2$ does not depend on $\mu$.\\
$c_3$ does not depend on $\lambda, \mu$. $c_3$ satisfies $c_3(q,m,k,p) = -c_3(q,k,m,p)$.\\
$c_4$ does not depend on $\lambda, \mu$.\\

\paragraph{Calculation of $\iota(\upsilon)$}
Suppose that $\hat W_r(k)$ is given by~\eqref{eq:W_hat_coefficients}. Rewriting this relation yields
\begin{align*}
	\hat W_r(k) = \frac{2}{\pi k}\sin(2\pi k r) = \frac{1}{k}f(kr),
\end{align*}
if $k\neq 0$ and $f\colon \R\to \R$ is given by $f(r) = \frac{2}{\pi}\sin(2\pi r)$. Moreover, note that
\begin{align*}
	\hat W_r(0) = k \hat W_\frac{r}{k}(0).
\end{align*}

We use this to calculate:
\begin{align*}
	H(q,r) = \frac{\hat W_r(0) + \hat W_r(2q) - 2\hat W_r(q)}{\hat W_r(q)} = u(qr),
\end{align*}
where $u\colon \R\to \R$ is defined by
\begin{align*}
	u(\upsilon) = \frac{4\upsilon + \frac 12 f(2\upsilon)-2f(\upsilon)}{f(\upsilon)}.
\end{align*}
Moreover,
\begin{align*}
	c_1(q,2q,p^0(0)) &= \frac 14 \left( \frac{1}{-q}f(-qr) + \frac{1}{3q}f(3qr) - \frac{2}{q}f(qr) - H(q,r)\frac 1q f(qr)\right)\\
	&=\frac 1q g(qr),
\end{align*}
with $g\colon \R\to \R$,
\begin{align*}
	g(\upsilon) = \frac 14\left( f(\upsilon) + \frac 13 f(3\upsilon) - 2f(\upsilon) - u(\upsilon)f(\upsilon)\right).
\end{align*}
Furthermore,
\begin{align*}
	c_3(q,2q,q,p^0(0)) &= \frac 18 \left( \frac{-1}{q}f(qr) + 8r - \frac 1q f(2qr) + \frac{1}{3q}f(3qr)\right)\\
	&=\frac{1}{q}h(qr),
\end{align*}
with $h\colon \R\to \R$,
\begin{align*}
	h(\upsilon) = \frac 18 \left(-f(\upsilon) + 8\upsilon - f(2\upsilon) + \frac 13 f(3\upsilon)\right).
\end{align*}
Finally, we can put everything together and calculate
\begin{align*}
	X(q,r) &= -\frac{1}{4q}f(qr) + \frac{h(qr)}{4g(qr)}\left(-4r + \frac{1}{2q}f(2qr)\right)\\
	&= \frac{1}{q}\iota(qr),
\end{align*}
with 
\begin{align*}
	\iota(\upsilon) = -\frac{1}{4}f(\upsilon) + \frac{h(\upsilon)}{4g(\upsilon)}\left(-4\upsilon + \frac 12 f(2\upsilon)\right).
\end{align*}
In conclusion, we have
\begin{align*}
	\iota(\upsilon) = \frac{-1}{2\pi}\sin(2\pi \upsilon)+\frac{-\sin(2\pi \upsilon) + 4\pi \upsilon - \sin(4\pi \upsilon) + \frac{1}{3}\sin(6\pi \upsilon)}{8\left(-\sin(2\pi \upsilon) + \frac 13 \sin(6\pi \upsilon) - (2 \pi \upsilon + \frac 12 \sin(4\pi \upsilon) - 2\sin(2\pi \upsilon))\right)}\left(-4\upsilon + \frac{1}{\pi}\sin(4\pi \upsilon)\right)
\end{align*}

\section{Supplementary Calculations}\label{sec:sup}
\subsection{Maximal Eigenvalue}

\begin{lemma}\label{lem:graph_eigvals}
For all $k\in \Z\setminus\{0\}$ and all $r$ with $0\le r \le \frac 12$ we have
\begin{align}\label{eq:graph_inequality}
    \hat W_r(1)\ge \hat W_r(k)
\end{align}
\end{lemma}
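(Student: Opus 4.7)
The plan is to reduce the statement to an elementary trigonometric inequality and then dispatch it by induction.

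First I would observe that by the convention $\hat W_r(-k)=\hat W_r(k)$ it suffices to treat $k\in\N$, and for $k=1$ there is nothing to prove. Then I would substitute the explicit formula~\eqref{eq:W_hat_coefficients}: for $k\ge 2$ the desired inequality $\hat W_r(1)\ge\hat W_r(k)$ is equivalent to
\begin{align*}
\sin(2\pi r)\;\ge\;\frac{\sin(2\pi k r)}{k},
\end{align*}
i.e., after setting $\theta:=2\pi r\in[0,\pi]$ (which uses the hypothesis $r\in[0,\tfrac12]$), to
\begin{align*}
k\sin\theta\;\ge\;\sin(k\theta)\qquad\text{for all }\theta\in[0,\pi],\ k\in\N.
\end{align*}

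Next I would establish this auxiliary inequality. On $[0,\pi]$ we have $\sin\theta\ge 0$, so it suffices to show the stronger two-sided bound $|\sin(k\theta)|\le k\sin\theta$ on this interval. I would prove this by induction on $k$: the base case $k=1$ is immediate, and for the induction step I would use the addition formula
\begin{align*}
\sin((k+1)\theta)=\sin(k\theta)\cos\theta+\cos(k\theta)\sin\theta,
\end{align*}
together with $|\cos\theta|\le 1$ and the induction hypothesis, to conclude
\begin{align*}
|\sin((k+1)\theta)|\le|\sin(k\theta)|+|\sin\theta|\le k\sin\theta+\sin\theta=(k+1)\sin\theta,
\end{align*}
where in the last step I use $\sin\theta\ge 0$ on $[0,\pi]$.

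Finally I would unwind the substitutions: this bound, specialised to $\theta=2\pi r$, yields $\sin(2\pi k r)\le k\sin(2\pi r)$, which divided by $k>0$ and multiplied by $2/\pi$ gives precisely \eqref{eq:graph_inequality}. Boundary cases $r=0$ and $r=\tfrac12$ simply give $0\ge 0$. There is no real obstacle here; the only subtlety is making sure one has $\sin\theta\ge 0$ throughout $[0,\pi]$ so that the induction bound is non-negative, which is exactly why the hypothesis $r\le\tfrac12$ is used.
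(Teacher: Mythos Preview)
Your proof is correct and takes a genuinely different route from the paper. The paper argues via calculus and case analysis: it compares derivatives $\frac{\d}{\d r}\hat W_r(1)=4\cos(2\pi r)$ and $\frac{\d}{\d r}\hat W_r(k)=4\cos(2\pi k r)$ on $[0,\tfrac{1}{4k}]$ to obtain the inequality there, then uses monotonicity of $\hat W_r(1)$ on $[0,\tfrac14]$ together with the global bound $\hat W_r(k)\le\tfrac{2}{\pi k}$ to extend to $[0,\tfrac14]$, and finally invokes a symmetry argument to cover $[\tfrac14,\tfrac12]$. Your approach, by contrast, reduces the statement in one stroke to the classical inequality $|\sin(k\theta)|\le k\sin\theta$ on $[0,\pi]$ and proves that by induction on~$k$ via the addition formula. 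Your argument is shorter, avoids splitting the interval into three pieces, and makes transparent exactly where the hypothesis $r\le\tfrac12$ is used (to keep $\sin\theta\ge 0$). The paper's approach is more hands-on with the specific shape of $\hat W_r(k)$ and would adapt more readily if one needed finer comparisons between different Fourier coefficients, but for the lemma as stated your route is cleaner.
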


\begin{proof}
Because $\hat W_r(k) = \hat W_r(-k)$ this inequality only needs to be shown for $k\in \N, k\ge 2$. 
The derivative of $\hat W_r(k)$ with respect to $r$ is given by
\begin{align*}
    \frac{\d}{\d r}\hat W_r(k) = 4 \cos(2\pi k r).
\end{align*}
Since $\cos$ is decreasing in $[0,\pi/2]$ we get $\frac{\d}{\d r}\hat W_r(1) = 4\cos(2\pi r) \ge 4 \cos(2\pi r k) = \frac{\d}{\d r}\hat W_r(k)$ for all $r\in [0,\frac{1}{4k}]$. Since $\hat W_0(k) = 0$, equation~\eqref{eq:graph_inequality} follows for all $r\in [0,\frac{1}{4k}]$.
On the one hand, since this holds in particular for $r=\frac{1}{4k}$ and $\frac{\d}{\d r}\hat W_r(1)\ge 0$ for all $r\le \frac{1}{4}$ we have $\hat W_r(1) \ge \hat W_\frac{1}{4k}(1) \ge \hat W_\frac{1}{4k}(k) = \frac{2}{\pi k}$ for all $r\in [\frac{1}{4k}, \frac 14]$. On the other hand, $\hat W_r(k) \le \frac{2}{\pi k}$. Therefore the inequality~\eqref{eq:graph_inequality} extends to all $r\in [0, \frac{1}{4}]$. By a symmetry argument one can see that it even holds for all $r\in [0, \frac 12]$.
\end{proof}

\begin{theorem}\label{thm:grapheigenvalues}
    If $\frac{2}{\pi q} \le 2r-\frac{1}{\pi}\sin(2\pi r)$, the largest eigenvalue of the linearization of~\eqref{eq:pd_problem} with $\lambda = \mu = 0$ around a $q$-twisted state is attained for $k=q$, i.e., $\max_k c_1(q,k, (r,0,0)) = c_1(q,q,(r,0,0))$.
\end{theorem}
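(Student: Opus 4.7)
The plan is to reduce the statement to an estimate on Fourier coefficients and then pair them carefully. By the defining formula for $c_1$, the desired inequality $c_1(q,q,(r,0,0)) \ge c_1(q,k,(r,0,0))$ is equivalent to
\begin{equation*}
\hat W_r(q-k) + \hat W_r(q+k) \;\le\; \hat W_r(0) + \hat W_r(2q) \;=\; 4r + \hat W_r(2q), \qquad k \in \N.
\end{equation*}
Multiplying the hypothesis $\frac{2}{\pi q} \le 2r - \frac{1}{\pi}\sin(2\pi r)$ by $2$ and recalling $\hat W_r(1) = \frac{2\sin(2\pi r)}{\pi}$ rewrites it as $4r - \hat W_r(1) \ge \frac{4}{\pi q}$; this will be the main quantitative input.

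The key step is to split the difference so as to pair each ``central'' coefficient with an appropriate ``extremal'' one. For $k\neq q$ I would write
\begin{equation*}
4r + \hat W_r(2q) - \hat W_r(q-k) - \hat W_r(q+k) \;=\; \bigl(4r - \hat W_r(q-k)\bigr) + \bigl(\hat W_r(2q) - \hat W_r(q+k)\bigr).
\end{equation*}
Lemma~\ref{lem:graph_eigvals} gives $\hat W_r(q-k) \le \hat W_r(1)$ whenever $q-k \neq 0$, and also $\hat W_r(q) \le \hat W_r(1)$ in the edge case $k=0$; hence the first bracket is at least $4r - \hat W_r(1) \ge \frac{4}{\pi q}$ by the reformulated hypothesis. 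The second bracket is controlled by the elementary bound $|\hat W_r(m)| \le \frac{2}{\pi|m|}$ for $m \neq 0$, which yields $|\hat W_r(2q)| \le \frac{1}{\pi q}$ and $|\hat W_r(q+k)| \le \frac{2}{\pi(q+k)}$. For $k \ge 1$ the latter is at most $\frac{2}{\pi(q+1)}$, and for $k=0$ one uses $|\hat W_r(q)| \le \frac{2}{\pi q}$ instead. Adding the two brackets then gives in every case a strictly positive lower bound, namely
$\frac{4}{\pi q} - \frac{1}{\pi q} - \frac{2}{\pi(q+1)} = \frac{q+3}{\pi q(q+1)}$ for $k \ge 1$ with $k\neq q$, and $\frac{4}{\pi q} - \frac{3}{\pi q} = \frac{1}{\pi q}$ for $k=0$.

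The main obstacle is finding a pairing that is tight enough. The naive route of applying Lemma~\ref{lem:graph_eigvals} to both summands, yielding $\hat W_r(q-k) + \hat W_r(q+k) \le 2\hat W_r(1)$, turns out to be too loose: one can check numerically that $2\hat W_r(1)$ may exceed $4r + \hat W_r(2q)$ even in regimes where the hypothesis holds (for instance near $q=3$, $r=0.28$). The asymmetric split above succeeds because the index $q+k$ is always at least $q+1$, so the decay estimate $|\hat W_r(q+k)| \le \frac{2}{\pi(q+1)}$ provides a bound of order $1/q$ for the second bracket, which is precisely the margin that the hypothesis $4r - \hat W_r(1) \ge \frac{4}{\pi q}$ supplies in the first.
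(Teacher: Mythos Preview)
Your proof is correct and follows essentially the same approach as the paper: both use the asymmetric split pairing $4r=\hat W_r(0)$ with $\hat W_r(q-k)$ (bounded via Lemma~\ref{lem:graph_eigvals} by $\hat W_r(1)$) and $\hat W_r(2q)$ with $\hat W_r(q+k)$ (bounded via $|\hat W_r(m)|\le \tfrac{2}{\pi|m|}$), then invoke the hypothesis in the form $4r-\hat W_r(1)\ge \tfrac{4}{\pi q}$. Your treatment of $k=0$ is unnecessary here since the eigenvalue index ranges over $k\ge 1$, but it does no harm.
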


\begin{proof}
Let $k\in \N$, $k\neq q$ be fixed and $r\in [0,\frac 12]$, $q\in\N$ such that the assumption in the theorem is fulfilled. Then,
\begin{align*}
    \hat W_r(q+k)-\hat W_r(2q) & \le  \frac{2}{\pi}\left( \frac{1}{q+k}+\frac{1}{2q}\right)\\
    &\le \frac{4}{\pi q}\\
    &\le 4r-\frac{2}{\pi}\sin(2\pi r)\\
    &=\hat W_r(0)-\hat W_r(1).
\end{align*}
since $k-q\in \Z\setminus\{0\}$, we obtain by Lemma~\ref{lem:graph_eigvals}
\begin{align*}
    \hat W_r(q+k) - \hat W_r(2q) \le \hat W_r(0) - \hat W_r(k-q).
\end{align*}
After rearranging this inequality and subtracting $2\hat W_r(q)$ to both sides it reads as
\begin{align*}
    \hat W_r(2q) + \hat W_r(0) - 2\hat W_r(q) \ge \hat W_r(q+k) + \hat W_r(k-q) - 2\hat W_r(q).
\end{align*}
This is equivalent to $c_1(q,q,(r,0,0)) \ge c_1(q,k,(r,0,0))$ for all $k\neq q$. Thus, the proof is complete.
\end{proof}

\subsection{$\gamma$ Ratio in the Attractive Kuramoto Model}\label{sec:gamma_ratio}

Here, we consider the bifurcation in the attractive Kuramoto model studied in Section~\ref{sec:application_graph}. We show that $\gamma_1/\gamma_2>0$ when the $q$-twisted state looses its stability. To do this, we assume the following statements, which mainly follow from an analysis in~\cite{Wiley2006}:
\begin{itemize}
	\item At $r = r_0$ the first eigenvalue $c_1(q,1,(r,0,0))$ passes through $0$ from below when increasing $r$. Moreover, at the bifurcation the other eigenvalues are negative, i.e., $c_1(q,\ell, (r_0,0,0))<0$ for all $\ell\ge 2$.
	\item $c_1(q+1, 1, (r_0,0,0))>0$.
\end{itemize}

From the first statement, it immediately follows that $\gamma_2>0$. It remains to show that $\gamma_1>0$.

In particular, using the abbreviations from Appendix~\ref{sec:abbreviations} and $p_0 = (r_0,0,0)$, it follows from these statements that
\begin{align}
\label{eq:assumption1}
	c_1(q,1,p_0) &= \frac{1}{4}(\hat W_{r_0}(q-1) - 2 \hat W_{r_0}(q) + \hat W_{r_0}(q+1)) = 0\\
\label{eq:assumption2}
	c_1(q,2,p_0) &= \frac{1}{4}(\hat W_{r_0}(q-2) - 2 \hat W_{r_0}(q) + \hat W_{r_0}(q+2)) < 0\\
\label{eq:assumption3}
	c_1(q+1,1,p_0) &= \frac{1}{4}(\hat W_{r_0}(q) - 2\hat W_{r_0}(q+1) + \hat W_{r_0}(q+2)) > 0
\end{align}
Using~\eqref{eq:assumption1} we then get
\begin{align*}
	16c_5(q,1,p_0) &= \hat W_{r_0}(q-2) - 4 \hat W_{r_0}(q-1) + 6\hat W_{r_0}(q) - 4\hat W_{r_0}(q+1) + \hat W_{r_0}(q+2)\\
	&= \hat W_{r_0}(q-2)-2\hat W_{r_0}(q)+\hat W_{r_0}(q+2) = :q_1.
\end{align*}
Moreover, we note that
\begin{align*}
	c_2(q,1,p_0) = c_3(q,2,1,p_0) = -\hat W_{r_0}(q-2) +2\hat W_{r_0}(q-1)-2\hat W_{r_0}(q+1) + \hat W_{r_0}(q+2) =: q_2.
\end{align*}
Then, we obtain
\begin{align*}
	32\gamma_1 &= 16\left(c_5(q,1,p_0)-\frac{c_2(q,1,p_0)c_3(q,2,1,p_0)}{c_1(q,2,p_0)}\right)\\
	&=q_1 - \frac{q_2^2}{q_1}\\
	&= \frac{1}{q_1}(q_1+q_2)(q_1-q_2)
\end{align*}
By~\eqref{eq:assumption2} we have $q_1<0$. Furthermore, using~\eqref{eq:assumption1} and~\eqref{eq:assumption3} we obtain
\begin{align*}
	q_1+q_2 &= 2(\hat W_{r_0}(q-1)-\hat W_{r_0}(q)-\hat W_{r_0}(q+1) + \hat W_{r_0}(q+2))\\
	&=2(\hat W_{r_0}(q) - 2\hat W_{r_0}(q+1) + \hat W_{r_0}(q+2))\\
	&>0.
\end{align*}
Consequently, $q_2>0$. Thus, we conclude $q_1<0$, $q_1+q_2>0$ and $q_1-q_2<0$ and therefore $\gamma_1>0$.

\section{Derivative}\label{sec:derivative}

In this section we show that $F$ is indeed Fr\'echet differentiable. We give the operator that represents the Fr\'echet-Derivative but only show this without higher-order interactions, i.e., when $\lambda = \mu = 0$, since that does not complicate but only lengthen the calculations.

We claim, that the $n$-th Fr\'echet-Derivative of $F(\Psi,p)$ around a state $\tilde \Psi$ is given by a $n$-linear operator $A^{\tilde\Psi} \colon (H_0^1)^n\to H_0^1$ with
\begin{align*}
	&(A^{\tilde \Psi}[\eta_1,\dots,\eta_n])(x)\\
	&= \int_\S W_r(x-y)\prod_{i=1}^n(\eta_i(y)-\eta_i(x))\sin^{[n]}(\tilde \Psi(y)-\tilde \Psi(x))\ \d y - \int_\S W_r(y)\prod_{i=1}^n\eta_i(y)\sin^{[n]}(\tilde \Psi(y))\ \d y\\
	&\quad +\lambda \left[ \int_\S\int_\S W_r(z+y-2x) \prod_{i=1}^n(\eta_i(z)+\eta_i(y)-2\eta_i(x))\sin^{[n]}(\tilde \Psi(z)+\tilde \Psi(y) -2\tilde \Psi(x)) \ \d y \d z\right.\\
	&\qquad - \left. \int_\S\int_\S W_r(z+y)\prod_{i=1}^n (\eta_i(z)+\eta_i(y))\sin^{[n]}(\tilde \Psi(z)+\tilde \Psi(y))\ \d y \d z\right]\\
	&\quad +\mu\left[ \int_\S\int_\S\int_\S W_r(z-y+w-x)\prod_{i=1}^n(\eta_i(z)-\eta_i(y)+\eta_i(w)-\eta_i(x))\sin^{[n]}(\tilde\Psi(z)-\tilde\Psi(y)+\tilde\Psi(w)-\tilde \Psi(x)) \ \d w\d y \d z\right.\\
	&\qquad - \left. \int_\S\int_\S\int_\S W_r(z-y+w)\prod_{i=1}^n(\eta_i(z)-\eta_i(y)+\eta_i(w))\sin^{[n]}(\tilde \Psi(z)-\tilde \Psi(y)+\tilde \Psi(w))\ \d w \d y \d z\right],
\end{align*}
where $\sin^{[n]}$ denotes the $n$-th derivative of $\sin$. 

The main estimations needed to prove that this is indeed a $n$-linear operator and the $n$-th Fr\'echet derivative of $F$ are
\begin{align}\label{eq:graph_der_estimation}
	\abs{\eta(y)-\eta(x)}^2 &= \abs{\int_x^y \partial \eta(z)\ \d z}^2 \le \int_\S \partial \eta(z)^2\ \d z = \norm{\partial \eta}_{L^2}^2,
\end{align}
for the part without higher-order interactions and
\begin{align}
	\label{eq:lambda_der_estimation}
	\abs{\eta(z)+\eta(y)-2\eta(x)}^2 &\le 4\norm{\partial\eta}_{L^2}^2,\\
	\label{eq:mu_der_estimation}
	\abs{\eta(z)-\eta(y)+\eta(w)-\eta(x)}^2 &\le 4\norm{\partial\eta}_{L^2}^2
\end{align}
for the parts involving higher-order interactions.
Due to the similarity of the main estimations~\eqref{eq:graph_der_estimation},\eqref{eq:lambda_der_estimation} and~\eqref{eq:mu_der_estimation} regarding the parts with and without higher-order interactions, respectively, we only consider parts without higher-order interactions in the following. That means we only proof the boundedness of $A^{\tilde \Psi}$ and its derivative property for $\lambda = \mu = 0$.

\subsection{Boundedness of $A^{\tilde \Psi}$}

First we show that $A^{\tilde \Psi}$ is bounded.

Denote
\begin{align*}
f(x) = \int_\S W_r(x-y)\prod_{i=1}^n(\eta_i(y)-\eta_i(x))\sin^{[n]}(\tilde \Psi(y)-\tilde \Psi(x))\ \d y
\end{align*}
and
\begin{align*}
g(x) = \int_\S W_r(y)\prod_{i=1}^n\eta_i(y)\sin^{[n]}(\tilde \Psi(y))\ \d y.
\end{align*}
Then 
\begin{align*}
(A^{\tilde \Psi}[\eta_1,\dots,\eta_n])(x) = f(x) - g(x).
\end{align*}

Using $\norm{\sin^{[n]}}_\infty\le 1$ and $\norm{W_r}_\infty \le 1$ we estimate
\begin{align*}
\norm{f}_{L^2}^2 &= \int_\S \left( \int_\S W_r(x-y)\prod_{i=1}^n(\eta_i(y)-\eta_i(x))\sin^{[n]}(\tilde \Psi(y)-\tilde \Psi(x))\ \d y \right)^2 \d x \\
&\le \int_\S  \int_\S \prod_{i=1}^n(\eta_i(y)-\eta_i(x))^2\ \d y  \d x \\
& = \prod_{i=1}^n\norm{\partial \eta_i}_{L^2}^2.
\end{align*}
Moreover, a similar estimation yields 
\begin{align*}
	\norm{g}_{L^2}^2 \le \prod_{i=1}^n\norm{\eta_i}_{L^2}^2.
\end{align*}

Further, we calculate the derivative of $f$:
\begin{align*}
	&D_x f(x)\\
	 &= D_x\left( \int_{x-r}^{x+r} \prod_{i=1}^n (\eta_i(y)-\eta_i(x)) \sin^{[n]}(\tilde \Psi(y)-\tilde \Psi(x)) \ \d y \right)\\
	&=\prod_{i=1}^n(\eta_i(x+r)-\eta_i(x))\sin^{[n]}(\tilde \Psi(x+r)-\tilde \Psi(x)) - \prod_{i=1}^n(\eta_i(x-r)-\eta_i(x))\sin^{[n]}(\tilde \Psi(x-r)-\tilde \Psi(x))\\
	&\quad + \int_{x-r}^{x+r} \sum_{j=1}^n \prod_{\stackrel{i=1}{i\neq j}}^n(\eta_i(y)-\eta_i(x))(-\partial \eta_j(x))\sin^{[n]}(\tilde \Psi(y)-\tilde\Psi(x)) \  \d y\\
	&\quad + \int_{x-r}^{x+r} \prod_{i=1}^n(\eta_i(y)-\eta_i(x))\sin^{[n+1]}(\tilde \Psi(y)-\tilde \Psi(x))(-\partial \tilde \Psi(x)) \ \d y.
\end{align*}

Let us denote 
\begin{align*}
h_1(x) = \prod_{i=1}^n(\eta_i(x+r)-\eta_i(x))\sin^{[n]}(\tilde \Psi(x+r)-\tilde \Psi(x))\\
h_2(x) = - \prod_{i=1}^n(\eta_i(x-r)-\eta_i(x))\sin^{[n]}(\tilde \Psi(x-r)-\tilde \Psi(x))
\end{align*}
and
\begin{align*}
u_j(x) &= \int_{x-r}^{x+r} \prod_{\stackrel{i=1}{i\neq j}}^n(\eta_i(y)-\eta_i(x))(-\partial \eta_j(x))\sin^{[n]}(\tilde \Psi(y)-\tilde\Psi(x))\ \d y,\\
q(x) &= \int_{x-r}^{x+r} \prod_{i=1}^n(\eta_i(y)-\eta_i(x))\sin^{[n+1]}(\tilde \Psi(y)-\tilde \Psi(x))(-\partial \tilde \Psi(x)) \ \d y
\end{align*}
Then, $D_x(A^{\tilde \Psi}[\eta_1,\dots,\eta_n])(x) = D_xf(x) = h_1(x) + h_2(x) + \sum u_j(x) + q(x)$. An estimation shows that
\begin{align*}
	\norm{h_1}_{L^2}^2 &= \int_\S \left[ \prod_{i=1}^n(\eta_i(x+r)-\eta_i(x))\sin^{[n]}(\tilde \Psi(x+r)-\tilde \Psi(x))\right]^2\ \d x\\
	&\le \int_\S \prod_{i=1}^n(\eta_i(x+r)-\eta_i(x))^2\ \d x\\
	&\le \prod_{i=1}^n \norm{\partial\eta_i}_{L^2}^2.
\end{align*}
Similarly, we obtain $\norm{h_2}_{L^2}^2 \le \prod_{i=1}^n \norm{\partial\eta_i}_{L^2}^2$. Moreover,
\begin{align*}
	\norm{u_j}_{L^2}^2 &= \int_\S \left( \int_{x-r}^{x+r}  \prod_{\stackrel{i=1}{i\neq j}}^n(\eta_i(y)-\eta_i(x))(-\partial \eta_j(x))\sin^{[n]}(\tilde \Psi(y)-\tilde\Psi(x))\ \d y \right)^2\ \d x\\
	&\le \int_\S \int_\S \prod_{\stackrel{i=1}{i\neq j}}^n \norm{\partial \eta_i}_{L^2}^2 (\partial\eta_j(x))^2 \ \d y \ \d x\\
	& = \prod_{i=1}^n \norm{\partial\eta_i}_{L^2}^2
\end{align*}
and
\begin{align*}
	\norm{q}_{L^2}^2 &= \int_\S \left( \int_{x-r}^{x+r} \prod_{i=1}^n(\eta_i(y)-\eta_i(x))\sin^{[n+1]}(\tilde \Psi(y)-\tilde \Psi(x))(-\partial \tilde \Psi(x)) \ \d y\right)^2 \ \d x\\
	&\le \int_\S \int_\S \prod_{i=1}^n \norm{\partial\eta_i}_{L^2}^2 (\partial \tilde \Psi(x))^2\ \d y \ \d x\\
	&\le  \prod_{i=1}^n \norm{\partial\eta_i}_{L^2}^2 \norm{\partial \tilde \Psi}_{L^2}^2
\end{align*}

All together, we obtain:
\begin{align*}
	\norm{A^{\tilde \Psi}[\eta_1,\dots,\eta_n]}_{H^1}^2 &= \norm{A^{\tilde \Psi}[\eta_1,\dots,\eta_n]}_{L^2}^2 + \norm{D_x(A^{\tilde \Psi}[\eta_1,\dots,\eta_n])}_{L^2}^2\\
	&=\norm{f-g}_{L^2}^2 + \norm{h_1+h_2 + \sum_{j=1}^n u_j + q}_{L^2}^2\\
	&\le 2\norm f_{L^2}^2 + 2\norm{g}_{L^2}^2 + (n+3)\left( \norm{h_1}_{L^2}^2 +\norm{h_2}_{L^2}^2 + \sum_{j=1}^n \norm{u_j}_{L^2}^2 + \norm{q}_{L^2}^2\right)\\
	&\le 2 \prod_{i=1}^n\norm{\partial \eta_i}_{L^2}^2 + 2\prod_{i=1}^n \norm{\eta_i}^2_{L^2} + (n+3)\left( (n+2)\prod_{i=1}^n\norm{\partial \eta_i}_{L^2}^2 + \prod_{i=1}^n\norm{\partial \eta_i}_{L^2}^2        \norm{\partial \tilde \Psi}_{L^2}^2 \right)\\
	&\le 4 \prod_{i=1}^n\norm{\eta_i}_{H^1}^2 + (n+3)\left( (n+2)\prod_{i=1}^n\norm{\eta_i}_{H^1}^2 + \prod_{i=1}^n\norm{\eta_i}_{H^1}^2 \norm{\tilde \Psi}_{H^1}^2 \right)\\
	&\le c \prod_{i=1}^n\norm{\eta_i}_{H^1}^2,
\end{align*}
where $c$ can be chosen as
\begin{align*}
	c = 4+(n+3)\left(n+2+\norm{\tilde \Psi}_{H^1}^2\right).
\end{align*}
Even though some estimations are far from being tight, this proves that $A^{\tilde \Psi}[\eta_1,\dots,\eta_n]$ is a bounded $n$-linear operator.

\subsection{Derivative Property}

To inductively show that $A^{\tilde \Psi}[\eta_1,\dots,\eta_n]$ is the $n$-th Fr\'echet derivative of $F$ around $\Tilde \Psi$ we need to confirm that
\begin{align}\label{eq:Frechet}
	\lim_{\norm{\eta_n}_{H^1}\to 0} \frac{1}{\norm{\eta_n}_{H^1}} \norm{A^{\tilde \Psi + \eta_n}[\eta_1,\dots,\eta_{n-1}] - A^{\tilde \Psi}[\eta_1,\dots,\eta_{n-1}] - A^{\tilde \Psi}[\eta_1,\dots,\eta_n]}_{H^1} = 0.
\end{align}

First we rewrite \allowdisplaybreaks
\begin{align}
\nonumber &A^{\tilde \Psi + \eta_n}[\eta_1,\dots,\eta_{n-1}] - A^{\tilde \Psi}[\eta_1,\dots,\eta_{n-1}] - A^{\tilde \Psi}[\eta_1,\dots,\eta_n]\\
\nonumber &=\int_\S W_r(x-y)\prod_{i=1}^{n-1}(\eta_i(y)-\eta_i(x))\sin^{[n-1]}(\tilde \Psi(y)-\tilde \Psi(x)+\eta_n(y)-\eta_n(x))\ \d y\\
\nonumber &\quad - \int_\S W_r(y)\prod_{i=1}^{n-1}\eta_i(y)\sin^{[n-1]}(\tilde \Psi(y)+\eta_n(y))\ \d y\\
\nonumber &\quad - \int_\S W_r(x-y)\prod_{i=1}^{n-1}(\eta_i(y)-\eta_i(x))\sin^{[n-1]}(\tilde \Psi(y)-\tilde \Psi(x))\ \d y\\
\nonumber &\quad + \int_\S W_r(y)\prod_{i=1}^{n-1}\eta_i(y)\sin^{[n-1]}(\tilde \Psi(y))\ \d y\\
\nonumber &\quad - \int_\S W_r(x-y)\prod_{i=1}^n(\eta_i(y)-\eta_i(x))\sin^{[n]}(\tilde \Psi(y)-\tilde \Psi(x))\ \d y\\
\nonumber &\quad +  \int_\S W_r(y)\prod_{i=1}^n\eta_i(y)\sin^{[n]}(\tilde \Psi(y))\ \d y\\
\begin{split}\label{eq:abb_g1}
&=\int_\S  W_r(x-y) \prod_{i=1}^{n-1}(\eta_i(y)-\eta_i(x)) \left[ \sin^{[n-1]}(\tilde \Psi(y)-\tilde \Psi(x)+\eta_n(y)-\eta_n(x))\right.\\
&\quad \left.-\sin^{[n-1]}(\tilde \Psi(y)-\tilde \Psi(x))-(\eta_n(y)-\eta_n(x))\sin^{[n]}(\tilde \Psi(y)-\tilde \Psi(x))\right]\ \d y
\end{split}\\
\label{eq:abb_g2}
&\quad - \int_\S W_r(y)\prod_{i=1}^{n-1}\eta_i(y) \left[ \sin^{[n-1]}(\tilde \Psi(y)+\eta_n(y))-\sin^{[n-1]}(\tilde \Psi(y))-\eta_n(y)\sin^{[n]}(\tilde \Psi(y))\right] \ \d y
\end{align}

By introducing the notation $g_1(x)$ for~\eqref{eq:abb_g1} and $g_2(x)$ for~\eqref{eq:abb_g2} we recover
\begin{align*}
A^{\tilde \Psi + \eta_n}[\eta_1,\dots,\eta_{n-1}] - A^{\tilde \Psi}[\eta_1,\dots,\eta_{n-1}] - A^{\tilde \Psi}[\eta_1,\dots,\eta_n] = g_1(x) + g_2(x).
\end{align*}
Note that by Taylor's Theorem we have
\begin{align*}
	f(x_0+a) = f(x_0) + af'(x_0) + \frac{f''(\xi)}{2} a^2
\end{align*}
for each twice continuously differentiable function $f$ and some $\xi\in (x_0, x_0+a)$ if $a>0$ and $\xi\in (x-0-a,x_0)$ if $a<0$. By applying this theorem to $f=\sin^{[n-1]}$ it follows that
\begin{align}\label{eq:taylor_sin}
	|\sin^{[n-1]}(x_0+a) - \sin^{[n-1]}(x_0) - a\sin^{[n]}(x_0)| \le \frac{a^2}{2}.
\end{align}
Using this inequality we can estimate
\begin{align*}
\norm{g_1}_{L^2}^2 &= \int_\S \left\{ \int_\S  W_r(x-y) \prod_{i=1}^{n-1}(\eta_i(y)-\eta_i(x)) \left[ \sin^{[n-1]}(\tilde \Psi(y)-\tilde \Psi(x)+\eta_n(y)-\eta_n(x))\right.\right.\\
&\quad \left.\left.-\sin^{[n-1]}(\tilde \Psi(y)-\tilde \Psi(x))-(\eta_n(y)-\eta_n(x))\sin^{[n]}(\tilde \Psi(y)-\tilde \Psi(x))\right]\ \d y \right\}^2 \ \d x\\
&\le \prod_{i=1}^{n-1}\norm{\partial \eta_i}_{L^2}^2 \int_\S\left( \int_\S \frac 12 (\eta_n(y)-\eta_n(x))^2\ \d y\right)^2 \d x\\
&=\frac 14  \prod_{i=1}^{n-1}\norm{\partial \eta_i}_{L^2}^2 \norm{\partial\eta_n}_{L^2}^4.
\end{align*}
Furthermore, also by using~\eqref{eq:taylor_sin}, we obtain
\begin{align*}
	\norm{g_2}_{L^2}^2 &= \int_\S \left( \int_\S W_r(y)\prod_{i=1}^{n-1}\eta_i(y) \left[ \sin^{[n-1]}(\tilde \Psi(y)+\eta_n(y))-\sin^{[n-1]}(\tilde \Psi(y))-\eta_n(y)\sin^{[n]}(\tilde \Psi(y))\right] \ \d y \right)^2\ \d x\\
	&\le \prod_{i=1}^{n-1}\norm{\eta_i}_{L^2}^2 \int_\S \left( \int_\S \frac 12 \abs{\eta_n(y)}^2\ \d y \right)^2\d x\\
	& = \frac 14 \prod_{i=1}^{n-1}\norm{\eta_i}_{L^2}^2 \norm{\eta_n}_{L^2}^4.
\end{align*}

Now, we calculate the derivative
\begin{align}
	\nonumber
	&D_x(A^{\tilde \Psi + \eta_n}[\eta_1,\dots,\eta_{n-1}] - A^{\tilde \Psi}[\eta_1,\dots,\eta_{n-1}] - A^{\tilde \Psi}[\eta_1,\dots,\eta_n])\\
	\nonumber
	&\quad = D_xg_1(x)\\
	\nonumber
	&\quad = D_x\left( \int_{x-r}^{x+r}\prod_{i=1}^{n-1}(\eta_i(y)-\eta_i(x)) \left[ \sin^{[n-1]}(\tilde \Psi(y)-\tilde\Psi(x) + \eta_n(y)-\eta_n(x)) \right.\right.\\
	\nonumber
	&\qquad \left.\left.-\sin^{[n-1]}(\tilde \Psi(y)-\tilde \Psi(x))-(\eta_n(y)-\eta_n(x))\sin^{[n]}(\tilde \Psi(y)-\tilde \Psi(x))\right] \ \d y \right)\\
	\begin{split}\label{eq:abb_u1}
	&\quad = \prod_{i=1}^{n-1}(\eta_i(x+r)-\eta_i(x)) \left[ \sin^{[n-1]}(\tilde \Psi(x+r)-\tilde\Psi(x) + \eta_n(x+r)-\eta_n(x)) \right.\\
	&\qquad \quad \left.-\sin^{[n-1]}(\tilde \Psi(x+r)-\tilde \Psi(x))-(\eta_n(x+r)-\eta_n(x))\sin^{[n]}(\tilde \Psi(x+r)-\tilde \Psi(x))\right]
	\end{split}\\
	\begin{split}\label{eq:abb_u2}
	&\qquad - \prod_{i=1}^{n-1}(\eta_i(x-r)-\eta_i(x)) \left[ \sin^{[n-1]}(\tilde \Psi(x-r)-\tilde\Psi(x) + \eta_n(x-r)-\eta_n(x)) \right.\\
	&\qquad \quad \left.-\sin^{[n-1]}(\tilde \Psi(x-r)-\tilde \Psi(x))-(\eta_n(x-r)-\eta_n(x))\sin^{[n]}(\tilde \Psi(x-r)-\tilde \Psi(x))\right]
	\end{split}\\
	\begin{split}\label{eq:abb_u3}
	&\qquad +\sum_{j=1}^{n-1} \int_{x-r}^{x+r} \prod_{\stackrel{i=1}{i\neq j}}^{n-1}(\eta_i(y)-\eta_i(x)) (-\partial\eta_j(x)) \left[ \sin^{[n-1]}(\tilde \Psi(y)-\tilde\Psi(x) + \eta_n(y)-\eta_n(x)) \right.\\
	&\qquad \quad \left.-\sin^{[n-1]}(\tilde \Psi(y)-\tilde \Psi(x))-(\eta_n(y)-\eta_n(x))\sin^{[n]}(\tilde \Psi(y)-\tilde \Psi(x))\right]\ \d y
	\end{split}\\
	\begin{split}\label{eq:abb_u4}
	&\qquad + \int_{x-r}^{x+r}\prod_{i=1}^{n-1}(\eta_i(y)-\eta_i(x))\left[ \sin^{[n]}(\tilde \Psi(y)-\tilde\Psi(x) + \eta_n(y)-\eta_n(x))(-\partial \tilde\Psi(x)-\partial\eta_n(x))\right.\\
	&\qquad \quad \left.- \sin^{[n]}(\tilde \Psi(y)-\tilde \Psi(x))(-\partial\tilde \Psi(x))-(-\partial \eta_n(x))\sin^{[n]}(\tilde \Psi(y)-\tilde \Psi(x))\right.\\
	&\qquad \quad \left. - (\eta_n(y)-\eta_n(x))\sin^{[n+1]}(\tilde \Psi(y)-\tilde \Psi(x))(-\partial \tilde \Psi(x)) \right]\ \d y
	\end{split}
\end{align}

We use the abbreviations $u_1(x)$ for~\eqref{eq:abb_u1}, $u_2(x)$ for~\eqref{eq:abb_u2}, $u_3^j(x)$ for the $j$-th summand in~\eqref{eq:abb_u3} and $u_4(x)$ for~\eqref{eq:abb_u4} such that $D_x(A^{\tilde \Psi + \eta_n}[\eta_1,\dots,\eta_{n-1}] - A^{\tilde \Psi}[\eta_1,\dots,\eta_{n-1}] - A^{\tilde \Psi}[\eta_1,\dots,\eta_n]) = u_1(x) + u_2(x) + \sum_j u_3^j(x) + u_4(x)$. Further, we split $u_4$ into $u_{4,1}(x)+u_{4,2}(x)$ as
\begin{align*}
	u_{4,1}(x) &= \int_{x-r}^{x+r}\prod_{i=1}^{n-1}(\eta_i(y)-\eta_i(x))(-\partial \tilde\Psi(x))\left[ \sin^{[n]}(\tilde \Psi(y)-\tilde\Psi(x) + \eta_n(y)-\eta_n(x))\right.\\
	&\quad \left.- \sin^{[n]}(\tilde \Psi(y)-\tilde \Psi(x)) - (\eta_n(y)-\eta_n(x))\sin^{[n+1]}(\tilde \Psi(y)-\tilde \Psi(x)) \right]\ \d y\\
	u_{4,2}(x) &=  \int_{x-r}^{x+r}\prod_{i=1}^{n-1}(\eta_i(y)-\eta_i(x))(-\partial\eta_n(x))\left[ \sin^{[n]}(\tilde \Psi(y)-\tilde\Psi(x) + \eta_n(y)-\eta_n(x))\right.\\
	&\qquad \quad \left.-\sin^{[n]}(\tilde \Psi(y)-\tilde \Psi(x)) \right]\ \d y
\end{align*}

Then, again by using Taylor's theorem, we can estimate
\begin{align*}
	\norm{u_1}_{L^2}^2 &\le \prod_{i=1}^{n-1} \norm{\partial \eta_i}_{L^2}^2 \int_\S \left(\sin^{[n-1]}(\tilde \Psi(x+r)-\tilde\Psi(x) + \eta_n(x+r)-\eta_n(x)) \right.\\
	&\quad \left.-\sin^{[n-1]}(\tilde \Psi(x+r)-\tilde \Psi(x))-(\eta_n(x+r)-\eta_n(x))\sin^{[n]}(\tilde \Psi(x+r)-\tilde \Psi(x))\right)^2 \ \d x\\
	&\le \prod_{i=1}^{n-1} \norm{\partial \eta_i}_{L^2}^2 \int_\S \left( \frac 12 (\eta_n(x+r)-\eta_n(x))^2\right)^2 \ \d x\\
	&\le \frac 14\prod_{i=1}^{n-1} \norm{\partial \eta_i}_{L^2}^2 \norm{\partial\eta_n}_{L^2}^4.
\end{align*}
Similarly, $\norm{u_2}_{L^2}^2 \le \frac 14\prod_{i=1}^{n-1} \norm{\partial \eta_i}_{L^2}^2 \norm{\partial\eta_n}_{L^2}^4$. Moreover,
\begin{align*}
	\norm{u_3^j}_{L^2}^2 &\le \prod_{\stackrel{i=1}{i\neq j}}^{n-1} \norm{\partial\eta_i}_{L^2}^2 \int_\S\int_\S (\partial \eta_j(x))^2 \left[ \sin^{[n-1]}(\tilde \Psi(y)-\tilde\Psi(x) + \eta_n(y)-\eta_n(x)) \right.\\
	&\quad \left.-\sin^{[n-1]}(\tilde \Psi(y)-\tilde \Psi(x))-(\eta_n(y)-\eta_n(x))\sin^{[n]}(\tilde \Psi(y)-\tilde \Psi(x))\right]^2\ \d y \d x\\
	&\le \prod_{\stackrel{i=1}{i\neq j}}^{n-1} \norm{\partial\eta_i}_{L^2}^2 \int_\S\int_\S (\partial \eta_j(x))^2 \left[ \frac 12 (\eta_n(y)-\eta_n(x))^2\right]^2\ \d y \d x\\
	&\le \frac 14\prod_{\stackrel{i=1}{i\neq j}}^{n-1} (\norm{\partial\eta_i}_{L^2}^2) \norm{\partial\eta_n}_{L^2}^4 \norm{\partial \eta_j}_{L^2}^2\\
	&= \frac 14 \prod_{i=1}^{n-1}( \norm{\partial \eta_i}_{L^2}^2 )\norm{\partial\eta_n}_{L^2}^4
\end{align*}
and
\begin{align*}
	\norm{u_{4,1}}_{L^2}^2 & \le \prod_{i=1}^{n-1} \norm{\partial\eta_i}_{L^2}^2 \int_\S \int_\S\left( (-\partial\tilde \Psi(x)) \frac 12 (\eta_n(y)-\eta_n(x))^2 \right)^2\ \d y \d x\\
	&\le \frac 14\prod_{i=1}^{n-1} \norm{\partial\eta_i}_{L^2}^2 \norm{\partial\eta_n}_{L^2}^4 \norm{\partial \tilde \Psi}_{L^2}^2,\\
	\norm{u_{4,2}}_{L^2}^2 &\le \prod_{i=1}^{n-1} \norm{\partial\eta_i}_{L^2}^2 \int_\S \int_\S \left[ (-\partial \eta_n(x)) (\eta_n(y)-\eta_n(x))\right]^2\ \d y\d x\\
	&\le \prod_{i=1}^{n-1} \norm{\partial\eta_i}_{L^2}^2\norm{\partial\eta_n}_{L^2}^4
\end{align*}
Finally, we can combine the estimations to obtain
\begin{align*}
&\norm{A^{\tilde \Psi + \eta_n}[\eta_1,\dots,\eta_{n-1}] - A^{\tilde \Psi}[\eta_1,\dots,\eta_{n-1}] - A^{\tilde \Psi}[\eta_1,\dots,\eta_n]}_{H^1}^2\\
&\le \norm{A^{\tilde \Psi + \eta_n}[\eta_1,\dots,\eta_{n-1}] - A^{\tilde \Psi}[\eta_1,\dots,\eta_{n-1}] - A^{\tilde \Psi}[\eta_1,\dots,\eta_n]}_{L^2}^2\\
&\quad + \norm{D_x(A^{\tilde \Psi + \eta_n}[\eta_1,\dots,\eta_{n-1}] - A^{\tilde \Psi}[\eta_1,\dots,\eta_{n-1}] - A^{\tilde \Psi}[\eta_1,\dots,\eta_n])}_{L^2}^2\\
&\le \norm{g_1 + g_2}_{L^2}^2 + \norm{u_1 + u_2 + \sum_{j=1}^{n-1} u_3^j + u_{4,1} + u_{4,2}}_{L^2}^2\\
&\le 2(\norm{g_1}_{L^2}^2 + \norm{g_2}_{L^2}^2) + (n+3)\left(\norm{u_1}_{L^2}^2+\norm{u_2}_{L^2}^2 + \sum_{j=1}^{n-1} \norm{u_3^j}_{L^2}^2 + \norm{u_{4,1}}_{L^2}^2 + \norm{u_{4,2}}_{L^2}^2\right)\\
&\le \prod_{i=1}^{n-1}\norm{\eta_i}_{H^1}^2 \norm{\eta_n}_{H^1}^4 + (n+3)\left( \frac{n+6}{4}\prod_{i=1}^{n-1}\norm{\eta_i}_{H^1}^2 \norm{\eta_n}_{H^1}^4 +  \frac 14 \prod_{i=1}^{n-1}\norm{\eta_i}_{H^1}^2 \norm{\eta_n}_{H^1}^4 \norm{\tilde \Psi}_{H^1}  \right) \\
&\le c \norm{\eta_n}_{H^1}^4,
\end{align*}
where $c$ can be chosen to be
\begin{align*}
	c = \prod_{i=1}^{n-1}\norm{\eta_i}_{H^1}^2\left(1+(n+3)\left(\frac{n+6}{4}+\frac 14 \norm{\tilde \Psi}_{H^1} \right)\right)
\end{align*}
This confirms~\eqref{eq:Frechet} and therefore concludes this section.

\bibliographystyle{plain}
\bibliography{main}

\end{document}